\numberwithin{equation}{section}
\newtheorem{theorem}{Theorem}[section]
\newtheorem{prop}[theorem]{Proposition}
\newtheorem{definition}[theorem]{Definition}
\newtheorem{hypothesis}[theorem]{Hypothesis}
\newtheorem{lem}[theorem]{Lemma}
\theoremstyle{remark}
\newtheorem{remark}[theorem]{Remark}
\newcommand{\cL}{\mathcal L}
\newcommand{\cM}{\mathcal M}
\newcommand{\cS}{\mathcal S}
\def\R{\mathbb{R}}
\def\N{\mathbb{N}}
\def\sDp{\mathscr{D}_p}
\def\op{{\rm op}}
\def\gr{{\rm gr}}
\DeclareMathOperator{\diag}{diag}
\DeclareMathOperator{\dist}{dist}
\def\id{{\rm{id}}}
\def\div{{\rm div}}
\def\min{{\rm min}}
\def\max{{\rm max}}
\def\HQp{\mathring{H}^{1,p}_Q}
\def\HQpc{H^{1,p}_Q}
\begin{document}

\title[Degenerate parabolic $p$-Laplacians]{Degenerate parabolic
  $p$-Laplacian equations: existence, uniqueness, \\ and asymptotic
  behavior of solutions}

\author[D. Cruz-Uribe]{David Cruz-Uribe, OFS}
\address{David Cruz-Uribe, Dept. of Mathematics,
         The University of Alabama,
         Box 870350,
         Tuscaloosa, AL 35487-0350,
         USA}
\email{dcruzuribe@ua.edu}

\author[K. Moen]{Kabe Moen}
\address{Kabe Moen, Dept. of Mathematics,
         The University of Alabama,
         Box 870350,
         Tuscaloosa, AL 35487-0350,
         USA}
\email{kabe.moen@ua.edu}

\author[Y. Shao]{Yuanzhen Shao}
\address{Yuanzhen Shao, Dept. of Mathematics,
         The University of Alabama,
         Box 870350,
         Tuscaloosa, AL 35487-0350,
         USA}
\email{yshao8@ua.edu}

\subjclass[2010]{Primary: 35K65, 46E35; Secondary: 35D30, 42B35, 42B37
 }

\keywords{Degenerate parabolic equations, weighted Sobolev
  inequalities, ultracontractive bounds, finite time extinction,
  $p$-Laplacian}


\thanks{The first and second authors are partially supported by Simons Foundation
  Travel Support for Mathematicians Grants.  The third author is partially supported by NSF grant DMS-2306991.}

\begin{abstract}
  In this paper we study the degenerate parabolic $p$-Laplacian,
  $ \partial_t u - v^{-1}\div(|\sqrt{Q} \nabla u|^{p-2} Q \nabla u){\color{blue}=0}$,
  where the degeneracy is controlled by a matrix $Q$ and a weight $v$.
  With mild integrability assumptions on $Q$ and $v$, we prove the
  existence and uniqueness of solutions {on any interval $[0,T]$}.  If we further assume
  the existence of a degenerate Sobolev inequality with gain, the
  degeneracy again controlled by $v$ and $Q$, then we can prove both
  finite time extinction and ultracontractive bounds.  Moreover, we
  show that there is equivalence between the existence of
  ultracontractive bounds and the weighted Sobolev inequality.
\end{abstract}

 \maketitle

\section{Introduction}

In this paper, we consider the existence, uniqueness, and asymptotic
behavior of weak solutions of the following degenerate parabolic $p$-Laplacian equation:
\begin{equation} \label{p-Lap-Dirichlet}
\begin{cases}
 \partial_t u - v^{-1}\div(|\sqrt{Q} \nabla u|^{p-2}  Q \nabla u)=0, &
 \text{in } \Omega \times [0,T],\\
 u(0) = u_0, &  \text{in } \Omega.
\end{cases}
\end{equation}
Throughout this paper, $\Omega$ is a bounded domain (i.e., an open and
connected subset) of  $\R^N$, with $N\geq 3$,  $Q: \Omega \to
\R^{N\times N}$ is a measurable,  symmetric matrix-valued function that
is positive definite a.e. and satisfies the degenerate ellipticity
condition
\begin{equation} \label{eqn:degen-ellipticity}
w(x) |\xi|^p \leq  |\sqrt{Q}(x) \xi|^p  \leq   v(x) |\xi|^p, \quad \xi\in \mathbb R^N,
\end{equation}
where $1<p<\infty$, and $v$ and $w$ are   weights (i.e.,  non-negative measurable
functions) that are in $L^1_{loc}(\Omega)$.
{See Remark~\ref{Rmk: boundary condition} for a brief
  discussion concerning boundary conditions for this equation.}

The literature on~\eqref{p-Lap-Dirichlet} and related PDEs is vast;
here we will only attempt to sketch some of the relevant history.  
When $Q\equiv I_{N\times N}$ and $v\equiv 1$,
equation~\eqref{p-Lap-Dirichlet} reduces to the 
parabolic $p$-Laplacian equation
\begin{equation}
\label{regularp-Lap-Dirichlet}
\partial_t u - \div(|  \nabla u|^{p-2}    \nabla u) =0.
\end{equation}
For decades, the parabolic $p$-Laplacian, along with its elliptic counterpart,
has been extensively studied, both for its
interesting mathematical structure but also because of its applications in fields like non-Newtonian flows, turbulent
flows in porous media and image processing.  The existence,
uniqueness, regularity, and asymptotic behavior of solutions to these
equations are well known:  see, for instance,~\cite{BonGrillo03, BonGrillo05, BonGrillo06,
  BonGrillo07, CipGri01, DiB83, DiB, DiBGiVe, Evans82, HerreroVaz81,
  Lewis, Lieber, Lions, StanVaz13, Tolksdorf, Trudinger67, Ural,
  Vaz06book}.  
In particular, we note that ultracontractivity bounds of
\eqref{regularp-Lap-Dirichlet}, i.e., inequalities of the form
$$
\|u(t)\|_\infty \leq \frac{C}{t^\beta} \|u_0\|_q^\gamma,
$$ 
for some $C,\beta, \gamma>0$ and $q\geq 1$, 
are equivalent to the existence  of Sobolev inequalities, such as
\begin{equation}
\label{Sobolev-Poincare unweighted}
\|u\|_{{2^*} } \leq C \|\nabla u\|_{2 },\quad  u\in C^{0,1}_c(\Omega),
\end{equation}
where $2^*=\frac{2N }{N-2}$ and $C^{0,1}_c(\Omega)$ is the collection of
Lipschitz functions of compact support in $\Omega$.

 
When $Q$ is a uniformly elliptic matrix, that is, when
\[ \lambda |\xi|^2 \leq \langle Q(x)\xi, \xi \rangle \leq \Lambda
  |\xi|^2, \qquad \xi \in \R^N, \] 
{it was shown in \cite{CipGri01, MR1911765} that the
  existence of a  Sobolev inequality implies that solutions satisfy  ultracontractivity bounds.
}
In this paper, we are interested in the case in
which $Q$ is degenerate: that is, the largest eigenvalue of $Q$ is
unbounded in $\Omega$ and the smallest eigenvalue of $Q$ can tend to
$0$.  (We note that the former case is referred to as singular by some
authors, who reserve degenerate for the latter case.  We will refer to
both cases simply as degenerate.)  More results are known in the
linear case, i.e., when $p=2$. In the elliptic case, when $w=v$ and
$w$ satisfies the so-called Muckenhoupt $A_p$ condition, these
equations were first studied in~\cite{MR0643158,MR0839035}.  The
corresponding parabolic equations were considered
in~\cite{MR1030914,MR0901091,MR0772255,MR0757584,MR0748366}; see
also~\cite{MR3261119,DavidRios08}.

{When the matrix $Q$ satisfies the degenerate ellipticity
condition,~\eqref{eqn:degen-ellipticity} has primarily been studied for
elliptic equations: see~\cite{DavidKabeVirginia,ChanWh,MR0805809,
  MR4224718,MR2574880,MR2204824,DavidScott21,DavidScottEmily18,
  DavidKabeScott,MR3388872,MR3369270,MR2906551,Ferrari,MR2305115,MR3922808}. } Not surprisingly,
weighted Sobolev and Poincar\'e inequalities play a very important
role in this work.  We note that beginning with the work of Sawyer and
Wheeden~\cite{MR2204824,MR2574880}, the study of these equations has
taken a more axiomatic approach.  In particular, it is frequently
assumed 
that a weighted Sobolev inequality exists without further specifying
conditions on $Q$, $v$ and $w$ for it to hold.

In this paper we will adopt a similar axiomatic approach.  We will
make a small set of assumptions on the weights $v$ and $w$
and the matrix $Q$ to prove existence, uniqueness and asymptotic
bounds.  We will show that the existence and uniqueness of a solution
to \eqref{p-Lap-Dirichlet} does {\em not} require that we assume a weighted Sobolev
inequality.  On the other hand, we will show that the  ultracontractive property of the solution
in some weighted Sobolev norm is equivalent to the existence of a  weighted
Sobolev inequality.  

The remainder of this paper is organized as follows.   In Section~\ref{Section: main
  results} we
state our main results.   We precede them by giving the 
definitions necessary to state our hypotheses and main results.  In
particular, we give a careful discussion of what we mean by
a weak solution to~\eqref{p-Lap-Dirichlet}.  As we were working on
this paper, we discovered that there are fundamentally different
understandings of solutions from the perspective of PDEs (represented
by the last author) and from the perspective of harmonic and
functional analysis (represented by the first two authors).  As this
initially caused a great deal of confusion, we think it is worthwhile
to discuss the issues involved with this question.  

In
Sections~\ref{section:sobolev} and~\ref{Section:semigroup} we gather
some technical results about degenerate Sobolev spaces and nonlinear
semigroup theory that is needed for our proofs.   In
Section~\ref{Section: Global weak solutions} we prove the existence
and uniqueness of solutions to~\eqref{p-Lap-Dirichlet}.  Finally, in
Sections~\ref{Section:strong-asymptotics} and~\ref{Section:asymptotics
  and Sobolev inequality} we prove the existence of asymptotic bounds
and show that they are equivalent to the existence of a weighted
Sobolev inequality.



\section{Statement of main results}\label{Section: main results}

In this section we will state our main results, Theorems~\ref{Thm:
  strong solution}--\ref{Thm: equivalence} below.  First, however, we must
state some basic definitions.  By a weight we mean a non-negative
function $v\in L^1_{loc}(\Omega)$.  For brevity, we will write $v$ for
the measure $v\,dx$: that is, given a measurable set
$E\subset \Omega$, we will write $v(E)= \int_E v\,dx$, and more generally
in integrals we will write $dv$ instead of $v\,dx$.  Let
$q\in [1,\infty)$; we define $L^q_v(\Omega)$ to be the space of all
measurable functions such that
$$
\|f\|_{L^q_v}:= \left( \int_\Omega |f|^q \, v \,dx \right)^{1/q} =  \left( \int_\Omega |f|^q \, dv \right)^{1/q}  <\infty.
$$
Similarly we define $L^\infty_v(\Omega)$ to be the space of all functions that are
essentially bounded with respect to the measure $v$.  When $q\in
(1,\infty)$, it is immediate that $L^q_v(\Omega)$  is a reflexive Banach space.

Let $\cM_N$ denote the collection of all real-valued, $N\times N$ matrices. The operator norm of a matrix $A\in \cM_N$ is defined by
$$
|A|_{\op}:=\sup\limits_{\xi\in \R^N, |\xi|=1}|A\xi|.
$$

Let $\cS_N$ denote the collection of $A\in \cM_N$ that are symmetric
(i.e., self-adjoint) and positive
semidefinite.
If $A\in \cS_N$, then it has $N$ non-negative eigenvalues, $\lambda_i$ with $1\leq i\leq N$, and
we have that $|A|_\op=\max_i \lambda_i$.

A matrix function is a map $A :\Omega \to \cM_N$; we say that it is measurable if each
component of $A$  is a measurable function. By a matrix weight we mean
a measurable matrix function $Q :\Omega \to \cS_N$ such that 
$|Q|_\op\in L^1_{loc}(\Omega)$.
A matrix weight $Q$ is diagonalizable by a measurable matrix function $U$
that is orthogonal: see~\cite[Lemma~2.3.5]{MR1350650}.  Consequently, each
eigenvalue (function) is measurable, and we have that each eigenvalue
$\lambda_i \in L^1_{loc}(\Omega)$ for $1\leq i\leq N$.  Moreover, we
can define powers of $Q$ as follows:  given $r\in \R$, if $D=\diag(\lambda_1,\ldots
\lambda_N)$ is such that $D = UQU^t$, then we define $Q^r=U^tD^rU$,
where $D^r=\diag(\lambda_1^r,\ldots,\lambda_N^r)$.  

Given a matrix weight $Q$, for $q\in [1,\infty)$ we define
$\cL^q_Q(\Omega)$ to be the space of all $\R^N$-valued vector fields
such that
$$
\| \vec{f} \|_{\cL^q_Q}
:=\left( \int_\Omega |\sqrt{Q} \vec{f}|^q \,   dx \right)^{1/q}<\infty.
$$
Again when $q \in (1,\infty)$, $\cL^q_Q(\Omega)$ is a reflexive Banach
space~\cite[Lemma~2.1]{DavidScottEmily18}.

Throughout this paper we will assume the following conditions on $Q$
and $v$.

\begin{hypothesis} \label{hyp:wt-matrix-hypotheses} The matrix weight
  $Q$ and the scalar weight $v$ in \eqref{p-Lap-Dirichlet} satisfy:
\begin{itemize}
\item $v\in L^1(\Omega)$.
\item $|\sqrt{Q}(x)|_{\op}^p\leq v(x)$ a.e.  
\item $|(\sqrt{Q})^{-1}|_{\op} \in L^{p'}_{loc}(\Omega).$
\end{itemize}
\end{hypothesis}

We define the scalar weight $w=|(\sqrt{Q})^{-1}|_{\op}^{-p}$;  then we have that $Q$ satisfies the two-sided degenerate ellipticity condition
$$w(x)^{2/p} |\xi|^2 \leq \langle Q(x) \xi,\xi \rangle \leq   v(x)^{2/p} |\xi|^2, \quad \ \xi\in \mathbb R^N,$$
which can be rewritten as
\begin{equation}
\label{Matrix weight}  w(x) |\xi|^p \leq  |\sqrt{Q}(x) \xi|^p  \leq   v(x) |\xi|^p, \quad \xi\in \mathbb R^N.
\end{equation}
The assumption ${ |(\sqrt{Q})^{-1}|_{\op} }\in L^{p'}_{loc}(\Omega)$
implies that $w^{1-p'}\in L^{1}_{loc}(\Omega)$. Further, 
$0<w(x)\leq v(x)<\infty$ a.e.

Our hypotheses on $Q$ and $v$ are essentially conditions imposed on
the largest and smallest eigenvalues of $Q$. Denote by
$\lambda_\max(x)$ and $\lambda_\min(x)$ the largest and smallest
eigenvalues of $Q(x)$. For example, if we choose
$v=|\sqrt{Q}|^p_{\op}=\lambda_\max^{p/2}$, then $v\in L^1(\Omega)$
provided $|Q|_\op=\lambda_\max\in L^{p/2}(\Omega)$.  Similarly, since
$|Q^{-1}|^{-1}_{\op}=\lambda_{\min}$, the assumption
$|(\sqrt{Q})^{-1}|_{\op}\in L^{p'}_{loc}(\Omega)$ is equivalent to
assuming $\lambda^{-1}_{\min}\in L^{p'/2}_{loc}(\Omega)$.

\begin{remark} \label{rem:hyp-wt}
  We gather together a few immediate consequences of
  Hypothesis~\ref{hyp:wt-matrix-hypotheses}.
  \begin{enumerate}
\item[(i)] Given a sequence $u_n\to u$ pointwise $v$-a.e. in $\Omega$,
  if we let 
$$
E=\{x\in \Omega: \lim\limits_{n\to \infty}u_n(x)\neq u(x)\},
$$
then $ v(E) =0$ implies that  $m(E)=0$, where $m$ is the Lebesgue measure
in $\R^N$. For otherwise, $v=0$ a.e. on $E$, which would contradict the fact that $0<w\leq v$ a.e.
Therefore,  $u_n\to u$ pointwise  a.e. in $\Omega$.

\item[(ii)] Since $v\in L^1(\Omega)$ we immediately have
$v(\Omega)<\infty$,   
which implies that
\begin{equation}\label{finite vol}
L^q_v(\Omega) \hookrightarrow L^r_v(\Omega) ,\quad 1\leq r \leq q \leq \infty.
\end{equation}
{
\item[(iii)] We have $\left(\sqrt{Q}\right)^{-1} \vec{f} \in L^{p^\prime}(\Omega )$ for all $ \vec{f} \in C_c(\Omega)$. Indeed, if $E={\rm supp}( \vec{f})$, then
\begin{equation}
  \int_\Omega \left|\left(\sqrt{Q}\right)^{-1}  \vec{f} \, \right|^{p'}\, dx
  \leq  \int_\Omega  \left| \left(\sqrt{Q} \right)^{-1} \right|_{\op}^{p'}| \vec{f}|^{p'}\, dx  
\leq  C \| \vec{f}\|^{p^\prime}_\infty\int_E  \left| \left(\sqrt{Q} \right)^{-1} \right|_{\op}^{p'} \, dx  
\leq   C\| \vec{f}\|^{p^\prime}_\infty.
\end{equation}
}
\end{enumerate}
\end{remark}

\medskip
 
In order to define the weak solutions of~\eqref{p-Lap-Dirichlet}, we
must first define the degenerate Sobolev space where they live.  The
Sobolev space $\HQpc(\Omega)$ is defined as the completion of
$C^{0,1}(\overline{\Omega})$ (that is, the space of Lipschitz
functions on $\overline{\Omega}$) with respect to the norm
$$
\|u\|_{\HQpc}:=\|u\|_{L^p_v} + \|\nabla u\|_{\cL^p_Q}.
$$
When $p \in (1,\infty)$, $\HQpc(\Omega)$ is a reflexive Banach space:
see~\cite[Lemma~2.3]{DavidScottEmily18}.  When $p=\infty$, this space
is defined analogously.  We note that, {\em a priori}, $\HQpc(\Omega)$
consists of equivalence classes of Cauchy sequences.  Since each
component of the norm is itself a norm, to each Cauchy sequence we can
associate a pair $(u,\vec{g})$, where $u \in L^p_v(\Omega)$ and
$\vec{g}\in \cL^p_Q(\Omega)$, but we need not have $\vec{g}=\nabla u$
even in the sense of weak (i.e., distributional) derivatives.
For a complete discussion of this phenomenon, see~\cite
{DavidScottEmily18}.  However, as we will show in
Section~\ref{section:sobolev} below,
Hypothesis~\ref{hyp:wt-matrix-hypotheses} actually guarantees that
$\HQpc(\Omega) \subset W^{1,1}_{loc}(\Omega)$ and that
$\vec{g}=\nabla u$.

The solution space $\HQp(\Omega)$ of \eqref{p-Lap-Dirichlet} is
defined to be the closure of $C^{0,1}_c(\Omega)$ in $\HQpc(\Omega)$.
It is immediate that when $p \in (1,\infty)$, $\HQp(\Omega)$ is also
a reflexive Banach space.

We can now define weak and strong solutions. Here and below,
we adopt the convention that if $u$ is a function defined on $\Omega
\times [0,T]$, we will write $u(t)$, $t\in [0,T]$, for the function
$u(\cdot,t)$. 


\begin{definition}\label{Def: Solution}
A measurable function $u:\Omega \times [0,T]\to \R$ is called a  weak
solution to  \eqref{p-Lap-Dirichlet} if the following hold:
\begin{itemize}
\item $u\in   W^{1,2}_{loc}((0,T); L^2_v(\Omega))\cap C([0,T];L^2_v(\Omega))$; 
\item $u(t)\in \HQp(\Omega)$ for a.e. $t\in (0,T)$; 
\item $\nabla u \in L^p((0,T); \cL^p_Q(\Omega))  $;
\item for any test function $\phi \in   C^\infty([0,T]; C^{0,1}_c(\Omega))$,
\begin{multline}
\label{Weak formulation}
\qquad \int_0^T \int_\Omega |\sqrt{Q} \nabla u|^{p-2} Q \nabla  u  \cdot \nabla \phi   \,dx \, dt   \\
=  \int_0^T\int_\Omega u(t)  \partial_t\phi (t)\, dv\,  dt +    \int_\Omega u_0 \phi(0)\, dv  - \int_\Omega u(T) \phi(T)\, dv;
\end{multline}

\item $u(t)\rightarrow u_0$ in $L^2_v(\Omega)$ as $t\rightarrow 0$.
\end{itemize}
\end{definition}

\begin{remark} \label{rem:weak-test-function}
By Proposition~\ref{Prop: density of L2} below and a density argument, one can easily check that, given any weak
solution $u$ of \eqref{p-Lap-Dirichlet}, \eqref{Weak formulation}
holds for all test functions $\phi \in   W^{1,2}((0,T);
L^2_v(\Omega))$ such that $\nabla \phi \in L^p((0,T);
\cL^p_Q(\Omega))$.
\end{remark}

\begin{definition} \label{Def:Strong-Solution}
A weak solution $u$ to  \eqref{p-Lap-Dirichlet} is called a
strong solution if $u$ further satisfies
\begin{itemize}
\item $u\in   W^{1,2}((0,T); L^2_v(\Omega)) $; 
\item $\nabla u \in L^\infty((0,T); \cL^p_Q(\Omega))$; 
\item for any $\phi  \in L^2 ((0,T); L^2_v(\Omega))$ such that  $\nabla \phi \in L^p((0,T); \cL^p_Q(\Omega))$,
\begin{equation}
\label{Strong formulation}
\int_0^T\int_\Omega \partial_t u(t) \phi (t)\, dv\,  dt   + \int_0^T \int_\Omega |\sqrt{Q} \nabla u|^{p-2} Q \nabla  u  \cdot \nabla \phi  \,dx \, dt  
=   0.
\end{equation}
\end{itemize}
\end{definition}

  \begin{remark}\label{Rmk: boundary condition}
    In stating equation~\eqref{p-Lap-Dirichlet} and defining its weak
    solution in Definition~\ref{Def:Strong-Solution}, we have not
    considered any boundary conditions.  Since the solution lives in
    $\HQp(\Omega)$, it would be natural to assume Dirichlet boundary
    conditions: that is,  $u=0$ on $\partial \Omega$.
    However, imposing boundary conditions for degenerate differential
    equations is a subtle issue. Indeed, even in the linear case,
    i.e. $p=2$, and $v\equiv 1$, Dirichlet boundary conditions will,
    in general, result in an overdetermined problem. Sufficient
    conditions for the well-posedness of \eqref{p-Lap-Dirichlet},
    assuming Dirichlet boundary conditions,
    are that the matrix $Q$ be smooth and be positive definite on $\partial\Omega$.
    See \cite{Fichera60, Oleinik64, Oleinik66} for more details.

    Following the theory of degenerate elliptic and parabolic
    equations as studied in, for instance,~\cite{MR0805809,
      MR4224718,MR2574880,DavidScott21,
      DavidKabeScott,MR3369270,MR2906551,MR2204824,MR2305115}, we want
    to impose the weakest hypotheses possible for a weak solution to
    exist.  In particular, we will allow $Q$ to be rough (i.e., with
    measurable coefficients) and to be non-positive definite along
    $\partial\Omega$.  We note that in many of the works cited,
    Definition~\ref{Def:Strong-Solution} would be regarded as the
    solution of the homogeneous Dirichlet problem, precisely because
    {the solution space is defined as the closure of the set of Lipschitz
    continuous functions with compact support in $H^{1,p}_Q(\Omega)$.}
    In the particular case that $v=w$ and $w$ satisfies the Muckenhoupt $A_2$
    condition, boundary conditions for the analogous elliptic operator
    have been studied in~\cite{MR0730093,MR0688024}.  We refer
    the interested reader to \cite{Amann20, GuidottiShao17, Shao18,
      YinWang04, YinWang07, ZhanFeng19, ZhanFeng21} for examples
    of nonlinear degenerate parabolic equations without boundary
    conditions or with partial boundary conditions.
  \end{remark}

 \begin{remark}
   A natural way to define the Dirichlet boundary condition $u=0$ on
   $\partial \Omega$ would be to use a trace operator: that is, a
   bounded linear map $T : A\rightarrow B$, where $A$ is some function
   space defined on $\Omega$ and $B$ is a function space defined on
   $\partial \Omega$.  However, when $A=\HQpc(\Omega)$, a trace
   operator $T$ cannot be defined in general.  Obviously, if it did
   exist, then by a density argument we would have that
   $T (\HQp(\Omega)) = \{0\}$.  But it is straightforward to construct
   a counter-example.  Let $\Omega$ be the unit ball $B(0,1)$ and
   define $d(x) = \dist(x, \partial \Omega)$.  Define
   $v=w=d^{\theta p}$ for some $\theta>1$ and $Q=v I_{N\times
     N}$. Then Hypothesis~\ref{hyp:wt-matrix-hypotheses} is satisfied.
   On the other hand, one can show that the function $f=d^{-\alpha}$
   with $\alpha=\theta-1+\frac{1-\Lambda}{p}$ satisfies
   $f\in \HQp(\Omega)$ for any $\Lambda>0$. If we choose
   $\Lambda\in (0,1)$ sufficiently small, then $\alpha>0$.  Consequently,
   $f(x)\to \infty$ as $|x|\rightarrow 1$. 

   The study of trace operators in weighted Sobolev spaces is still an
   active area of research.  For positive results see~\cite{Tyulenev,
     Tyulenev2, KosSotWang} and the references they contain.  Many of
   the positive results known assume that $v=w$ and $w$ is in the Muckenhoupt
   $A_p$ class.    In our example above, when $\theta>1$ implies that $\theta p /(1-p) \leq  -1$ and thus $v\notin A_p$. 
\end{remark}

\medskip

We can now state our first result, which gives the existence and
uniqueness of  weak solutions to \eqref{p-Lap-Dirichlet}: that
is, for every $T>0$, the function $u$ is a weak solution on $[0,T]$.
Moreover, we show that with additional hypotheses, weak solutions are
strong solutions.

\begin{theorem}
\label{Thm: strong solution}
Fix $1<p<\infty$ and assume the matrix $Q$ and weight $v$ satisfy
Hypothesis~\ref{hyp:wt-matrix-hypotheses}.  Then, for every
$u_0\in L^2_v(\Omega)$ {and $T>0$}, \eqref{p-Lap-Dirichlet} has a unique weak
solution $u$  {on $[0,T]$}.  Furthermore, if
$u_0\in \HQp(\Omega)\cap L^2_v(\Omega)$, then $u$ is a strong
solution.

If, in addition,
$u_0\in L^q_v(\Omega)\cap L^2_v(\Omega)$ {for $q\in [1,\infty]$}, then a weak solution $u$ is
$L^q_v$-contraction: i.e., for all $t>0$,
\begin{equation}
\label{L infty contraction}
\|u(t)\|_{L^q_v} \leq \  \|u_0\|_{L^q_v}.
\end{equation}

Finally, if $u_1,u_2$ are the weak solutions of \eqref{p-Lap-Dirichlet} with respect to initial data $u_{0,1},u_{0,2}\in L^2_v(\Omega)$, 
then for all $t>0$,
\begin{equation}
\label{comparison principle}
\int_\Omega (u_1(t) -u_2(t))^+\, dv \leq \int_\Omega (u_{0,1} -u_{0,2})^+\, dv.
\end{equation}
\end{theorem}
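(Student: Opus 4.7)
The plan is to realize \eqref{p-Lap-Dirichlet} as the gradient flow, in the Hilbert space $H:=L^2_v(\Omega)$, of the convex functional
\[
\Phi(u) := \frac{1}{p} \int_\Omega |\sqrt{Q}\,\nabla u|^p\,dx, \qquad D(\Phi) := \HQp(\Omega)\cap L^2_v(\Omega),
\]
extended by $+\infty$ outside $D(\Phi)$. Then \eqref{p-Lap-Dirichlet} becomes the abstract evolution $\partial_t u + \partial\Phi(u)\ni 0$, which falls under the Brezis--Komura theory of subdifferential flows on Hilbert spaces recalled in Section~\ref{Section:semigroup}.

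I would first verify that $\Phi$ is proper, convex, and lower semicontinuous on $H$. Properness holds since $C^{0,1}_c(\Omega)\subset D(\Phi)$. Convexity follows from the linearity of $u\mapsto\sqrt{Q}\,\nabla u$, interpreted through the embedding $\HQp(\Omega)\hookrightarrow W^{1,1}_{loc}(\Omega)$ to be proved in Section~\ref{section:sobolev}, combined with the convexity of $|\cdot|^p$. Lower semicontinuity is a standard weak compactness argument using the reflexivity of $\HQp(\Omega)$: any sequence $u_n\to u$ in $L^2_v$ with $\sup_n\Phi(u_n)<\infty$ has $\nabla u_n$ weakly convergent in $\cL^p_Q$ with limit $\nabla u$ by the local $W^{1,1}$ embedding, and the $\cL^p_Q$-norm is weakly lower semicontinuous. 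A direct computation of the one-sided derivative of $\Phi$ along $\phi\in C^{0,1}_c(\Omega)$, followed by a formal integration by parts, then identifies
\[
\partial\Phi(u) \;=\; \bigl\{-v^{-1}\,\div\bigl(|\sqrt{Q}\,\nabla u|^{p-2}Q\,\nabla u\bigr)\bigr\}
\]
on the natural $L^2_v$-domain. Brezis's theorem yields, for every $u_0\in\overline{D(\Phi)}^{\,H}=L^2_v(\Omega)$ (density being Proposition~\ref{Prop: density of L2}), a unique $u\in C([0,T];H)\cap W^{1,2}_{loc}((0,T);H)$ with $u(t)\in D(\Phi)$ a.e.\ and $-\partial_t u(t)\in\partial\Phi(u(t))$; rewriting via integration by parts in $t$ produces the weak formulation \eqref{Weak formulation}. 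The dissipation identity $\Phi(u(t))+\int_0^t\|\partial_s u\|_H^2\,ds=\Phi(u_0)$ (first for $u_0\in D(\Phi)$, then by density) yields $\nabla u\in L^p((0,T);\cL^p_Q)$ and, when $u_0\in D(\Phi)$, the upgraded bounds $u\in W^{1,2}((0,T);H)$ and $\nabla u\in L^\infty((0,T);\cL^p_Q)$ of a strong solution, with \eqref{Strong formulation} following by undoing the time integration by parts in \eqref{Weak formulation}. Uniqueness reduces, after subtracting two solutions and testing with $u_1-u_2$, to the monotonicity inequality $(\xi-\eta)\cdot(|\sqrt{Q}\xi|^{p-2}Q\xi - |\sqrt{Q}\eta|^{p-2}Q\eta)\ge 0$.

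For the $L^q_v$-contraction with $q\in[2,\infty)$, I would test \eqref{Strong formulation} against $\phi = |u|^{q-2}u$, suitably truncated and mollified to fit the admissibility class of Remark~\ref{rem:weak-test-function}. The chain rule $\nabla\phi = (q-1)|u|^{q-2}\nabla u$ produces
\[
\frac{1}{q}\frac{d}{dt}\|u(t)\|_{L^q_v}^q + (q-1)\int_\Omega |u|^{q-2}|\sqrt{Q}\,\nabla u|^p\,dx = 0,
\]
and the second term is nonnegative, which on integration gives \eqref{L infty contraction}. The case $q=\infty$ follows by letting $q\to\infty$ together with \eqref{finite vol}, and $q\in[1,2)$ by approximating $|u|^{q-2}u$ with bounded truncations. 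For the comparison principle, I would use the analogous test function $\phi = j_\varepsilon'(u_1-u_2)$, where $j_\varepsilon\in C^2(\R)$ is a convex smoothing of $s\mapsto s^+$ with $j_\varepsilon'\ge 0$ and $j_\varepsilon''\ge 0$. Subtracting the two equations and testing yields
\[
\frac{d}{dt}\int_\Omega j_\varepsilon(u_1-u_2)\,dv \;\le\; 0,
\]
because the spatial dissipation term $\int_\Omega j_\varepsilon''(u_1-u_2)\bigl(|\sqrt{Q}\nabla u_1|^{p-2}Q\nabla u_1-|\sqrt{Q}\nabla u_2|^{p-2}Q\nabla u_2\bigr)\cdot\nabla(u_1-u_2)\,dx$ is nonnegative by the same monotonicity used for uniqueness; sending $\varepsilon\to 0$ gives \eqref{comparison principle}.

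The principal obstacle is bridging the abstract semigroup picture and the concrete weak formulation of Definition~\ref{Def: Solution}. One must justify that $\partial\Phi$ is actually realized by $-v^{-1}\div(|\sqrt{Q}\nabla u|^{p-2}Q\nabla u)$ on $L^2_v$, which hinges on the density results and the embedding $\HQp(\Omega)\subset W^{1,1}_{loc}(\Omega)$ of Section~\ref{section:sobolev}. One must also verify that the non-smooth test functions $|u|^{q-2}u$ and $j_\varepsilon'(u_1-u_2)$ can be approximated in the class $W^{1,2}((0,T);L^2_v(\Omega))\cap\{\nabla\phi\in L^p((0,T);\cL^p_Q)\}$ of Remark~\ref{rem:weak-test-function}; this in turn rests on a chain rule for $\HQp$ under composition with Lipschitz functions, which again follows from the $W^{1,1}_{loc}$ embedding. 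These chain-rule and density checks are the delicate but essentially routine technical steps of the argument.
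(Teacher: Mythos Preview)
Your approach is correct and follows the same overarching strategy as the paper---realize the equation as the subdifferential flow of the convex energy $\Phi$ on $L^2_v(\Omega)$---but the execution differs in two notable ways.

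First, for existence and regularity the paper works through the Crandall--Liggett framework: it builds $\varepsilon$-approximate solutions via the implicit Euler scheme \eqref{p-Lap-Dirichlet-discretized}, proves that $\partial\sDp$ is $m$-completely accretive (Proposition~\ref{Prop: maximal accretivee}), and then extracts the mild solution as a uniform limit, upgrading to strong and weak solutions by explicit discrete energy estimates. You instead invoke the Brezis--Komura theorem directly, which is cleaner and delivers the $W^{1,2}_{loc}((0,T);H)$ regularity and $u(t)\in D(\Phi)$ a.e.\ without discretization. One small imprecision: the dissipation identity $\Phi(u(t))+\int_0^t\|\partial_s u\|_H^2\,ds=\Phi(u_0)$ does not by itself give $\nabla u\in L^p((0,T);\cL^p_Q)$ for general $u_0\in L^2_v$, since $\Phi(u_0)=+\infty$. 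The bound you need is the energy identity $\tfrac12\|u(T)\|_H^2+p\int_0^T\Phi(u)\,dt=\tfrac12\|u_0\|_H^2$ obtained by testing with $u$ itself (this is the paper's estimate \eqref{Weak est 2}), which is uniform in $\|u_0\|_{L^2_v}$ and survives the density argument.

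Second, for the $L^q_v$-contraction and the comparison principle the paper exploits \emph{complete accretivity}: once $\partial\sDp$ is shown to be completely accretive (via Proposition~\ref{Prop: complete accretive} and the monotonicity inequality), the inequalities $\int_\Omega j(u_{n,k})\,dv\le\int_\Omega j(u_{n,k-1})\,dv$ for every $j\in J_0$ hold at the discrete level and pass to the limit. This yields \eqref{L infty contraction} for all $q\in[1,\infty]$ and \eqref{comparison principle} simultaneously, with no need to justify nonlinear test functions. Your route---testing with $|u|^{q-2}u$ and with $j_\varepsilon'(u_1-u_2)$---is the classical Kato-inequality approach and is also correct, but as you note it requires the chain rule in $\HQp(\Omega)$ and some care with truncations, especially for $q<2$ where $|u|^{q-2}u$ is singular. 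The paper's complete-accretivity argument sidesteps these technicalities entirely; conversely, your approach is self-contained and does not require introducing the abstract machinery of Definitions~\ref{Def: Complete Accretivity} and the class $P_0$.
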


\begin{remark}
  By the uniqueness of solutions in Theorem~\ref{Thm: strong
    solution}, we know that{, for any $T>0$, there exists a unique   weak
  solution on $[0,T]$}.  Also note that any weak solution of
  \eqref{p-Lap-Dirichlet} is a strong solution on $[\tau, T]$ for all
  $\tau\in (0,T)$.
  {Indeed, given any weak solution $u$ on $[0,T]$, the uniqueness of a weak solution  
  implies that $u$ coincides with the mild solution with initial datum $u_0=u(0)\in L^2_v(\Omega)$.
  See Sections~\ref{Section:semigroup} and \ref{section:proof-global-strong}.
  Consider \eqref{p-Lap-Dirichlet} on $[\tau, T]$ with initial datum $u(\tau)\in \HQp(\Omega)\cap L^2_v(\Omega)$. 
  Then the additional regularity $\nabla u \in L^\infty((\tau,T); \cL^p_Q(\Omega))$ follows from 
  \eqref{L infty nabla u} below. }
\end{remark}

\bigskip

Finally, to prove the ultracontractivity of the solution, we will need
to assume that  a global weighted Sobolev inequality with gain holds.
In particular, we need the following.

\begin{hypothesis} \label{hyp:sobolev}
  Given $1<p<\infty$, there exist constants $M_p>0$ and
  $\sigma=\sigma_p>1$ such that matrix weight $Q$ and the weight $v$
  in~\eqref{p-Lap-Dirichlet} satisfy
\begin{equation}
\label{Sobolev ineq gain}
\|u\|_{L^{\sigma p}_v} \leq M_p \|\sqrt{Q}\nabla u\|_p ,\quad  u \in \HQp(\Omega).
\end{equation}
\end{hypothesis}

\begin{remark}
  By Hypothesis~\ref{hyp:wt-matrix-hypotheses}, $v(\Omega)<\infty$, so
  inequality \eqref{Sobolev ineq gain} implies a weighted Sobolev
  inequality without gain,
\begin{equation}
\label{Sobolev ineq}
\|u\|_{L^{p}_v} \leq C_p \|\sqrt{Q}\nabla u\|_p ,\quad  u \in \HQp(\Omega), 
\end{equation} 
albeit with a constant that depends on $v(\Omega)$. In
\cite{DavidScottEmily20}, it is shown that inequality \eqref{Sobolev
  ineq} is intimately connected to the solvability of the Dirichlet
problem for the degenerate $p$-Laplacian
$$
\begin{cases}\div(|\sqrt{Q}\nabla u|^{p-2}Q\nabla u)-\tau|u|^{p-2}u=f,
  & \text{in } \Omega, \\
  u=0, & \text{on }  \partial \Omega,
\end{cases}
$$
where $\tau>0$ and $f\in L^{p'}(\Omega)$.
\end{remark}

\begin{remark}
In general, little is known about when \eqref{Sobolev ineq gain}
holds.  One approach is through local two weight Sobolev inequalities
with weights $(w,v)$, namely,
\begin{equation}\label{localtwoweight}
  \left(\frac{1}{v(B)}\int_B|u|^{\sigma
      p} dv\right)^{\frac{1}{\sigma p}}
  \leq Cr(B) \left(\frac{1}{w(B)}\int_B|\nabla
    u|^{p}w\,dx\right)^{\frac{1}{p}}.
\end{equation}
If $w=|(\sqrt{Q})^{-1}|_{\op}^{-p}$, then by the left inequality in
\eqref{Matrix weight} we have that $w|\nabla u|^p\leq |\sqrt{Q}\nabla
u|^p$. Since there are no restrictions on the size of the ball $B$, we
have that \eqref{localtwoweight} implies \eqref{Sobolev ineq gain}.
Inequality \eqref{localtwoweight} has been investigated by Chanillo
and Wheeden \cite{ChanWh,MR0805809}; they  showed that \eqref{localtwoweight} holds under the following assumptions:
\begin{itemize}
\item $v$ is doubling and $w\in A_p(\Omega)$:
$$\left(\frac{1}{|B|}\int_B w\,dx\right)\left(\frac{1}{|B|}\int_B w^{1-p'}\,dx\right)\leq C$$
for all balls $B\subseteq \Omega$.
\item the pair $(w,v)$ satisfies the balance condition: there exists $q>p$ such that for every $0<s<1$ and ball $B\subset \Omega$
$$s\left(\frac{v(sB)}{v(B)}\right)^{1/q}\leq C \left(\frac{w(sB)}{w(B)}\right)^{1/p},$$
where $B=B(x,r)$ and $sB=B(x,sr)$ is the concentric ball with dilated radius.
\end{itemize} 
\end{remark}

\begin{remark}  We note in passing that the corresponding Poincar\'e inequality
without gain,
\begin{equation}
\label{Poincare no gain}
\left(\int_\Omega |u-u_\Omega|^pdv\right)^{1/p}\leq C\left(\int_\Omega |\sqrt{Q}\nabla u|^p\,dx\right)^{1/p}
\end{equation}
has been studied in \cite{CruzIsraMoen} and \cite{PerRela}. In particular, in \cite{PerRela}, the authors show that if $v$ and $Q$ satisfy
$$\sup_B\left(\frac{1}{|B|}\int_B dv\right)^{\frac1p}\left(\frac{1}{|B|}\int_B |\sqrt{Q}^{-1}|_{\op}^{-p'}\, {dx }\right)^{\frac1{p'}}<\infty$$
then the scalar Poincar\'e inequality
\begin{equation}
\label{scalarPoincare} \left(\int_\Omega |u-u_\Omega|^pdv\right)^{1/p}\leq C\left(\int_\Omega |\sqrt{Q}^{-1}|_{\op}^{-p}|\nabla u|^p\,dx\right)^{1/p}
\end{equation}
holds.  Inequality \eqref{scalarPoincare} in turn implies \eqref{Poincare no gain} since
$$|\sqrt{Q}^{-1}|_{\op}^{-p}|\nabla u|^p\leq |\sqrt{Q}\nabla u|^p.$$
Further, in \cite{DavidScottEmily18}, it is shown that \eqref{Poincare no gain} is equivalent to the solvability of the following Neumann boundary value problem  
\begin{align*}
\left\{\begin{aligned}
\div(|\sqrt{Q} \nabla u|^{p-2}  Q \nabla u)&= |f|^{p-2} f v &&\text{in}&& \Omega ,\\
\nu_\Omega \cdot Q\nabla u &=0 &&\text{on}&& \partial\Omega,
\end{aligned}\right.
\end{align*}
where $\nu_\Omega$ is outer unit normal of $\Omega$.
\end{remark}

\medskip

We can now state our results on the asymptotic behavior of weak solutions.

\begin{theorem} \label{Thm:strong asymptotics}
Suppose that Hypotheses~\ref{hyp:wt-matrix-hypotheses}
and~\ref{hyp:sobolev} hold.  
Given $q_0 \geq 1$ and  $q_0>q_c:=\sigma'(2-p)$, where $\sigma'=\sigma_p'$ is
the {H\"older conjugate} of the gain in the Sobolev inequality~\eqref{Sobolev ineq gain}.
Let $p_0=\max\{q_0,2\}$.
  Given $u_0\in   L^{p_0}_v(\Omega) $, let $u$ be the
  weak solution of \eqref{p-Lap-Dirichlet} {on $[0,T]$ for all $T>0$}. Then there exists a positive constant  $C$,
 depending only on $p$, $q_0$  and the constants $\sigma_p$ and $M_p$ in~\eqref{Sobolev ineq gain}, such that
 \begin{equation}
\label{asymptotics L2}
\| u (t)\|_{L^{\infty}_v} \leq C(\sigma_p,p,q_0,M_p) \frac{ \|u_0\|_{L^{q_0}_v}^\gamma }{t^\beta} ,
\end{equation}
where
\begin{align}\label{Def: beta gamma}
  \beta =  \frac{\sigma'}{\sigma'(p-2)+q_0}
  \quad \text{and } \quad
\gamma=  \frac{q_0}{\sigma'(p-2)  + q_0}. 
\end{align}
\end{theorem}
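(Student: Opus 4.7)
The approach is Moser iteration on the $L^{q_n}_v$ norms, with the geometric recursion $q_{n+1}=\sigma(q_n+p-2)$ on the exponents. The hypothesis $q_0>q_c=\sigma'(2-p)$ is exactly the condition that $q_0$ lies above the fixed point of this recursion, so that $q_n = q_c+(q_0-q_c)\sigma^n\nearrow\infty$ geometrically, and the iteration can be closed.

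First, I would establish an $L^q$ energy inequality. By the remark following Theorem~\ref{Thm: strong solution}, any weak solution is strong on $[\tau,T]$ for $\tau>0$. Testing \eqref{Strong formulation} against $\phi=|u|^{q-2}u$ (formally, to be justified below) and combining the chain-rule identity
$$|u|^{q-2}|\sqrt{Q}\nabla u|^p=\Bigl(\tfrac{p}{q+p-2}\Bigr)^p\bigl|\sqrt{Q}\nabla\bigl(|u|^{(q+p-2)/p}\,\mathrm{sgn}\,u\bigr)\bigr|^p$$
with Hypothesis~\ref{hyp:sobolev} applied to $|u|^{(q+p-2)/p}\,\mathrm{sgn}\,u\in\HQp(\Omega)$ yields
$$\frac{d}{dt}\|u(t)\|_{L^q_v}^q \;\leq\; -c(p,q,M_p)\,\|u(t)\|_{L^{\sigma(q+p-2)}_v}^{q+p-2}.$$

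Next, I would set up the iteration at the levels $q_n$. Choose $T_n=T(1-2^{-n})$, integrate the above on $[T_n,T_{n+1}]$, and use the $L^{q_{n+1}}_v$-contraction of Theorem~\ref{Thm: strong solution} to bound the time-integrand from below by the endpoint value $\|u(T_{n+1})\|_{L^{q_{n+1}}_v}^{q_n+p-2}$. Dropping the non-negative terminal left-hand side gives
$$Y_{n+1} \leq K_n Y_n^{\alpha_n},\qquad Y_n:=\|u(T_n)\|_{L^{q_n}_v},\quad \alpha_n=\tfrac{q_n}{q_n+p-2},$$
with $K_n$ explicit in $q_n$, $T$, $p$, $M_p$, and $2^{-n}$. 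Iterating and using $Y_0\leq\|u_0\|_{L^{q_0}_v}$ (from $T_0=0$ plus contraction), the resulting bound on $Y_n$ involves $\|u_0\|_{L^{q_0}_v}$ raised to the power $\prod_{k=0}^{n-1}\alpha_k=\sigma^n q_0/q_n$, which tends to $q_0/(q_0-q_c)=\gamma$ as $n\to\infty$. A parallel bookkeeping shows the $T$-exponent collapses by telescoping to $-\beta$, while the $n$-dependent constants $C_{q_n}$ and $2^{-n}$ aggregate into a convergent infinite product whose value depends only on $\sigma_p,p,q_0,M_p$. Letting $n\to\infty$ and using $v(\Omega)<\infty$ to pass from $L^{q_n}_v$ norms to $L^\infty_v$ norms delivers the claimed estimate.

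The principal technical issue is the rigorous justification of the test function $|u|^{q-2}u$ and the attendant chain rule: \emph{a priori} one neither knows that $u(t)\in L^{q_n}_v$ at each step of the iteration, nor that $|u|^{(q+p-2)/p}\,\mathrm{sgn}\,u\in\HQp(\Omega)$ with the expected derivative. I would resolve this by approximating $u_0$ in $L^{p_0}_v$ by data $u_0^\varepsilon\in\HQp(\Omega)\cap L^\infty_v(\Omega)$, so that by the $L^\infty_v$-contraction from Theorem~\ref{Thm: strong solution} the associated strong solutions are essentially bounded and the above manipulations are legitimate; further regularizing via $(u^2+\delta)^{(q-2)/2}u$ when $q<2$ to absorb the singularity of $|u|^{q-2}$ on $\{u=0\}$; carrying out the iteration uniformly in $\varepsilon,\delta$; and passing to the limit using the comparison estimate~\eqref{comparison principle}.
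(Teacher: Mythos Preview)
Your proposal is correct and well thought out, but it takes a genuinely different route from the paper. You run a \emph{discrete} Moser iteration: with $q_{n+1}=\sigma(q_n+p-2)$ and time levels $T_n=T(1-2^{-n})$ you obtain a recursion $Y_{n+1}\le K_nY_n^{\alpha_n}$ and close it by computing the limiting exponents via telescoping products. The paper instead runs the \emph{continuous} variant, often called the Davies--Gross method: it proves a logarithmic Sobolev inequality (their Lemma~\ref{Lem: Log Sob}) from Hypothesis~\ref{hyp:sobolev}, introduces a time-dependent exponent $q:[0,t)\to[q_0,\infty)$ with $q(s)\to\infty$ as $s\to t^-$, sets $Y(s)=\log\|u(s)\|_{L^{q(s)}_v}$, derives a linear first-order differential inequality $Y'+d(s)Y+h(s)\le 0$, and reads off $\beta,\gamma$ by solving the comparison ODE explicitly with the choice $q(s)=q_0t/(t-s)$. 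Both arguments rest on the same energy estimate (their Proposition~\ref{Prop: time der Lr norm}, your ``$L^q$ energy inequality''), and both handle the rigorous justification by the same approximation: reduce to bounded data in $\HQp\cap L^\infty_v$, carry out the manipulation for strong solutions, and pass to the limit. Your approach is arguably more elementary---no log-Sobolev lemma is needed and the bookkeeping is pure algebra---while the paper's approach packages the iteration into a single ODE, which makes the dependence on $\sigma_p$, $M_p$, $p$, $q_0$ more transparent and avoids tracking infinite products. One minor point: the paper first reduces to nonnegative $u_0$ via the comparison principle and then tests with $u^{r-1}$, whereas you keep the sign and test with $|u|^{q-2}u$; both are fine.
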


\begin{remark}
\label{Rmk: asymptotic}
\begin{itemize}
\item[]
\item[(i)] Our assumptions on $q_0$ guarantee that $\beta$ and $\gamma$ are positive. 
\item[(ii)] When $p\in [2,\infty)$, $q_c\leq 0$. In particular, we can always choose $q_0=1$ in \eqref{asymptotics L2}.
\item[(iii)] When {$p\in (2-2/\sigma' , 2)$}, $q_c<2$. In this case, we can always choose $q_0=2$ in \eqref{asymptotics L2}.
\end{itemize}
\end{remark}

\begin{theorem} \label{Thm: finite time extinction}
Suppose that Hypotheses~\ref{hyp:wt-matrix-hypotheses}
and~\ref{hyp:sobolev} hold and  $1<p<2$.
  Given $u_0\in   L^{m }_v(\Omega) $, where $m =\max\left\{2 , q_c  \right\}$ with $q_c:=\sigma'(2-p)$, let $u$ be the
   weak solution of \eqref{p-Lap-Dirichlet} {on $[0,T]$ for all $T>0$}.  Then there exists a finite time $T_0\in (0,\infty)$ such that $u(t,\cdot)= 0$  a.e. in $\Omega$ for all $t\geq T_0$.
\end{theorem}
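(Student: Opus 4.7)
The plan is to obtain a differential inequality of the form
\[
y'(t) \leq -C\,y(t)^{r}, \qquad r \in (0,1),
\]
for $y(t) := \|u(t)\|_{L^m_v}^m$, from which finite-time extinction follows by ODE comparison with the explicit bound $T_0 = \tfrac{m}{(2-p)C}\,\|u_0\|_{L^m_v}^{2-p}$. The exponent $r < 1$ is what distinguishes finite extinction from mere exponential decay, and it is forced precisely by the hypothesis $1 < p < 2$. The three ingredients are (a) an energy identity obtained by testing against a nonlinear power of $u$; (b) Hypothesis~\ref{hyp:sobolev}, applied after rewriting the nonlinear integrand via the chain rule; (c) H\"older's inequality on the finite measure $v$.

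\textbf{Key steps.} (1) Since $m \geq 2$ and $v(\Omega) < \infty$ give $u_0 \in L^m_v(\Omega) \cap L^2_v(\Omega)$, Theorem~\ref{Thm: strong solution} produces a unique weak solution satisfying the $L^m_v$-contraction~\eqref{L infty contraction} and the strong-solution identity~\eqref{Strong formulation} on $[\tau, T]$ for every $\tau \in (0,T)$. Testing~\eqref{Strong formulation} against $\phi = |u|^{m-2}u$ and integrating by parts in $x$ yields
\[
\tfrac{1}{m}\,\tfrac{d}{dt}\|u(t)\|_{L^m_v}^m + (m-1)\int_\Omega |u|^{m-2}|\sqrt{Q}\nabla u|^p\,dx = 0.
\]
(2) Set $\alpha := (p+m-2)/p \geq 1$. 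The chain-rule identity $\nabla(|u|^{\alpha-1}u) = \alpha\,|u|^{\alpha-1}\nabla u$ converts the nonlinear integral into $\alpha^{-p}\int_\Omega |\sqrt{Q}\nabla(|u|^{\alpha-1}u)|^p\,dx$. Applying Hypothesis~\ref{hyp:sobolev} to $|u|^{\alpha-1}u \in \HQp(\Omega)$ then gives
\[
\int_\Omega |u|^{m-2}|\sqrt{Q}\nabla u|^p\,dx \geq \alpha^{-p}M_p^{-p}\,\|u\|_{L^{\alpha\sigma p}_v}^{\alpha p}.
\]
(3) A direct computation shows that the condition $m \geq q_c = \sigma'(2-p)$ is exactly equivalent to $m \leq \alpha\sigma p$; hence H\"older on the finite measure $v$ yields
\[
\|u\|_{L^m_v} \leq v(\Omega)^{1/m - 1/(\alpha\sigma p)}\,\|u\|_{L^{\alpha\sigma p}_v},
\]
(with constant $1$ in the borderline case $m = q_c$, where $\alpha\sigma p = m$). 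Combining (1)--(3) gives $y'(t) \leq -C\,y(t)^{(p+m-2)/m}$ with exponent $(p+m-2)/m = 1 - (2-p)/m < 1$, and the ODE comparison delivers the extinction time $T_0$. By Remark~\ref{rem:hyp-wt}(i), $y(t) = 0$ forces $u(t,\cdot) = 0$ a.e.\ in $\Omega$.

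\textbf{Main obstacle.} The principal technical difficulty is justifying the use of the nonlinear test function $\phi = |u|^{m-2}u$ in~\eqref{Strong formulation} when $m > 2$, since the contraction~\eqref{L infty contraction} only gives $\phi(t) \in L^{m/(m-1)}_v$, which may fail to lie in $L^2_v$. The standard remedy is a Stampacchia-type truncation: replace $u$ by $u_K := (-K) \vee u \wedge K$, run the argument with $\phi_K := |u_K|^{m-2}u_K$ (which lies in the required test-function class), and pass $K \to \infty$ by monotone/dominated convergence, using the $L^m_v$-contraction as a uniform dominant. A complementary technical point is to verify that $|u|^{\alpha-1}u \in \HQp(\Omega)$ whenever $u \in \HQp(\Omega)$, which follows by approximating $t \mapsto |t|^{\alpha-1}t$ uniformly on compact sets by Lipschitz functions and invoking the definition of $\HQp(\Omega)$ as a closure of $C^{0,1}_c(\Omega)$.
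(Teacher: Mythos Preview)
Your proposal is correct and follows essentially the same route as the paper: test with a power of $u$, rewrite the dissipation via the chain rule as $\|\sqrt{Q}\nabla(|u|^{\alpha-1}u)\|_p^p$, apply the Sobolev inequality~\eqref{Sobolev ineq gain}, and close with an ODE comparison for $y^{1-(2-p)/m}$. The paper packages the energy identity as Proposition~\ref{Prop: time der Lr norm} (stated for nonnegative $u$, after first reducing to $u_0\geq 0$ via the comparison principle) and then splits into the cases $q_c\geq 1$ (working with $r=q_c$, where the Sobolev exponent matches exactly and no H\"older is needed) and $q_c<1$ (choosing $r=q_0\in(1,2)$ and using H\"older); your choice to work with $r=m=\max\{2,q_c\}$ throughout and with the odd power $|u|^{m-2}u$ unifies these two cases and avoids the sign reduction, at the cost of carrying the H\"older factor $v(\Omega)^{1/m-1/(\alpha\sigma p)}$ when $m>q_c$. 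The paper passes from smooth data to general $u_0\in L^m_v$ by approximation in $L^m_v$, whereas you exploit that any weak solution is strong on $[\tau,T]$ and use the $L^m_v$-contraction to control $y(\tau)$; both are fine. Your identification of the truncation needed to justify $\phi=|u|^{m-2}u$ as a test function when $m>2$ is exactly the right technical point, and is handled in the paper by first restricting to $u_0\in \HQp(\Omega)\cap L^\infty_v(\Omega)$.
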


%

The following theorem shows that the Sobolev inequality~\eqref{Sobolev ineq gain} is the weakest condition we can impose for the asymptotic estimate~\eqref{asymptotics L2}.

\begin{theorem} \label{Thm: equivalence}
Suppose that Hypothesis~\ref{hyp:wt-matrix-hypotheses} holds and  $p\in [2,\infty)$. 
Given $u_0\in   \HQp(\Omega) $, let $u$ be the
strong solution of \eqref{p-Lap-Dirichlet} {on $[0,T]$ for all $T>0$}.  
Then the following are equivalent:
\begin{itemize}
\item[(i)] the asymptotic estimate~\eqref{asymptotics L2} holds with $\beta$ and $\gamma$ defined by \eqref{Def: beta gamma};
\item[(ii)] the Sobolev inequality~\eqref{Sobolev ineq gain} holds;
\item[(iii)] for any $q_0\in [1,2)$, the following Nash inequality holds for all $u\in \HQp(\Omega)$ 
\begin{equation}
\label{G-N-ineq}
\|u\|_{L^2_v} \leq C \|\nabla u
\|_{\cL^p_Q}^{\frac{{\sigma p} (2-q_0)}{2\sigma p-2q_0 }} 
\|u\|_{L^{q_0}_v}^{\frac{\sigma q_0 (p-2)+ 2 {(\sigma-1)} q_0}{2\sigma p - 2q_0}}.
\end{equation}
\end{itemize}
Moreover, the constant $\sigma=\sigma_p$ in {\em (i)-(iii)} is the same.
\end{theorem}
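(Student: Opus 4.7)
The plan is to close the cycle $(ii)\Rightarrow(i)\Rightarrow(iii)\Rightarrow(ii)$; the first link is Theorem~\ref{Thm:strong asymptotics}, and tracking the gain $\sigma_p$ through each step gives the final claim about equality of constants.

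For $(i)\Rightarrow(iii)$, fix $u_0\in\HQp(\Omega)$ and let $u$ be the strong solution. Testing~\eqref{Strong formulation} with $\phi=u$ yields the energy identity
\[
\|u_0\|_{L^2_v}^2 - \|u(t)\|_{L^2_v}^2 = 2\int_0^t \|\sqrt{Q}\nabla u(s)\|_p^p\,ds.
\]
Since~\eqref{p-Lap-Dirichlet} is the $L^2_v$-gradient flow of the convex functional $u\mapsto\tfrac{1}{p}\|\sqrt{Q}\nabla u\|_p^p$, the map $s\mapsto\|\sqrt{Q}\nabla u(s)\|_p$ is nonincreasing, so the right-hand side is bounded by $2t\|\sqrt{Q}\nabla u_0\|_p^p$. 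Combining this with the elementary interpolation $\|u(t)\|_{L^2_v}^2\le\|u(t)\|_{L^\infty_v}^{2-q_0}\|u(t)\|_{L^{q_0}_v}^{q_0}$ (valid since $q_0<2$), the ultracontractive bound (i) applied to $\|u(t)\|_{L^\infty_v}$, and the $L^{q_0}_v$-contraction~\eqref{L infty contraction} applied to $\|u(t)\|_{L^{q_0}_v}$ produces
\[
\|u_0\|_{L^2_v}^2 \le C\,t^{-\beta(2-q_0)}\,\|u_0\|_{L^{q_0}_v}^{\gamma(2-q_0)+q_0}+2t\,\|\sqrt{Q}\nabla u_0\|_p^p.
\]
Minimizing the right-hand side in $t>0$ gives a Nash inequality $\|u_0\|_{L^2_v}\le C\,\|\sqrt{Q}\nabla u_0\|_p^{a}\|u_0\|_{L^{q_0}_v}^{1-a}$ with $a=p\beta(2-q_0)/(2(\beta(2-q_0)+1))$; substituting~\eqref{Def: beta gamma} and using $\sigma'=\sigma/(\sigma-1)$ shows $a$ and $1-a$ are exactly the exponents appearing in~\eqref{G-N-ineq}.

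For $(iii)\Rightarrow(ii)$, apply~\eqref{G-N-ineq} with $u$ replaced by $|u|^s$ for $s\ge 1$. Since $|\sqrt{Q}\nabla|u|^s|=s|u|^{s-1}|\sqrt{Q}\nabla u|$, Hölder's inequality couples $\|\,|u|^{s-1}|\sqrt{Q}\nabla u|\,\|_p$ with $\|\sqrt{Q}\nabla u\|_p$ and a higher $L^{r}_v$-norm of $u$. Iterating yields a recursion for $\|u\|_{L^{q_k}_v}$ along a sequence $q_k\to\sigma p$; provided the product of constants stays finite, the limit delivers~\eqref{Sobolev ineq gain}.

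The main obstacle will be the Moser iteration in $(iii)\Rightarrow(ii)$: the combination of the matrix weight $Q$ and the $p$-power nonlinearity makes the $s$-dependence of the Hölder constants intricate, and convergence of the scheme must be verified carefully. A technically comparable alternative is to prove $(iii)\Rightarrow(i)$ directly: inserting~\eqref{G-N-ineq} into the energy identity together with~\eqref{L infty contraction} for $\|u(t)\|_{L^{q_0}_v}$ yields the differential inequality
\[
\frac{d}{dt}\|u(t)\|_{L^2_v}^2\le -C\,\|u(t)\|_{L^2_v}^{p/A}\,\|u_0\|_{L^{q_0}_v}^{-p(1-A)/A},\qquad A=\frac{\sigma p(2-q_0)}{2\sigma p-2q_0},
\]
which integrates to an $L^{q_0}_v\to L^2_v$ smoothing bound; a standard $L^2_v\to L^\infty_v$ bootstrap (mirroring the proof of Theorem~\ref{Thm:strong asymptotics}) then yields (i).
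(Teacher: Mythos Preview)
Your steps $(ii)\Rightarrow(i)$ and $(i)\Rightarrow(iii)$ are exactly what the paper does: the first is Theorem~\ref{Thm:strong asymptotics}, and for the second the paper uses the same energy identity, the bound $\|\nabla u(t)\|_{\cL^p_Q}\le\|\nabla u_0\|_{\cL^p_Q}$ (which it obtains from~\eqref{L infty nabla u} rather than the gradient-flow argument you give, but these are equivalent), the same interpolation, and the same minimization in $t$.

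The difference is in $(iii)\Rightarrow(ii)$. The paper does not iterate: it simply invokes \cite[Theorem~3.1]{Bakry95}, which is a general ``Sobolev inequalities in disguise'' result showing that a Nash-type inequality of the form~\eqref{G-N-ineq} implies the Sobolev inequality~\eqref{Sobolev ineq gain} with the same $\sigma$. That reference is stated at a level of abstraction (a functional $f\mapsto\|\nabla f\|$ and a measure for the $L^q$ scale) that covers the present mixed setting, where function norms are taken in $L^q_v$ but the gradient norm is the unweighted $\|\sqrt{Q}\nabla\cdot\|_p$.

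Your Moser scheme runs into exactly the mismatch that the Bakry--Coulhon--Ledoux--Saloff-Coste argument is designed to avoid. After substituting $|u|^s$, the term you must control is
\[
\bigl\|\,|u|^{s-1}\sqrt{Q}\nabla u\,\bigr\|_p^p=\int_\Omega |u|^{(s-1)p}\,|\sqrt{Q}\nabla u|^p\,dx,
\]
an integral with respect to Lebesgue measure. There is no H\"older splitting that produces $\|\sqrt{Q}\nabla u\|_p$ together with an $L^r_v$-norm of $u$, because the factor $|u|^{(s-1)p}$ carries no weight $v$. One can pull out $\|u\|_{L^\infty}^{(s-1)p}$, but that is what you are ultimately trying to bound. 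The Bakry et al.\ proof proceeds instead by truncation/level-set decomposition of $u$, which sidesteps this difficulty entirely.

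Finally, your proposed alternative of proving $(iii)\Rightarrow(i)$ directly does not close the cycle: together with $(ii)\Rightarrow(i)\Rightarrow(iii)$ it only yields $(i)\Leftrightarrow(iii)$ and $(ii)\Rightarrow(i)$, and you still owe an implication back to $(ii)$.
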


\begin{remark}
  In the classical case (i.e., when $v=1$ and $Q=I$)
  Theorems~\ref{Thm:strong asymptotics}, \ref{Thm: finite time extinction}, and~\ref{Thm: equivalence} are all
  known.  See~\cite{BonGrillo07,MR2819280, CipGri01} and the references therein.
\end{remark}


\section{Degenerate Sobolev spaces}
\label{section:sobolev}

In this section we gather together some important properties of the
degenerate Sobolev spaces $\HQpc$ and $\HQp$ given the assumption of
Hypothesis~\ref{hyp:wt-matrix-hypotheses}.  We first show that these
spaces consist of functions in $W^{1,1}_{loc}(\Omega)$.  The proof of
the following result is almost identical to the proof of
Theorem~\cite[Theorem~5.2]{DavidKabeScott}.

\begin{prop} \label{prop:weak-derivs}
  For $1\leq p<\infty$, given a sequence $\{u_n\}_{n=1}^\infty$ of functions in
  $C^{0,1}(\overline{\Omega})$ that is Cauchy in $\HQpc$, then $u_n
  \rightarrow u$ in $L^p_v(\Omega)$, and $\nabla u_n \rightarrow
  \vec{g}$ in $\cL^q_Q(\Omega)$.  Moreover, $u \in
  W^{1,1}_{loc}(\Omega)$ and $\nabla u = \vec{g}$.
\end{prop}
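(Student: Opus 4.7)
The plan is to extract the $L^p_v$ and $\cL^p_Q$ limits directly from the Cauchy hypothesis, promote those limits to local $L^1$ convergence using the integrability provided by Hypothesis~\ref{hyp:wt-matrix-hypotheses}, and then pass to the limit in the classical integration-by-parts identity that each Lipschitz $u_n$ satisfies.

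Since the $\HQpc$ norm is $\|u\|_{L^p_v}+\|\nabla u\|_{\cL^p_Q}$, the sequence $\{u_n\}$ is Cauchy in $L^p_v(\Omega)$ and $\{\nabla u_n\}$ is Cauchy in $\cL^p_Q(\Omega)$; completeness of these two Banach spaces produces limits $u\in L^p_v(\Omega)$ and $\vec{g}\in \cL^p_Q(\Omega)$. I would then upgrade these to $L^1_{loc}(\Omega)$ convergence. For a fixed compact $K\subset \Omega$, the pointwise inequality $|\xi|\leq |(\sqrt{Q})^{-1}|_{\op}\,|\sqrt{Q}\xi|$ applied to $\xi=\nabla u_n-\vec{g}$, together with H\"older's inequality, gives
$$
\int_K |\nabla u_n-\vec{g}|\,dx\leq \big\||(\sqrt{Q})^{-1}|_{\op}\big\|_{L^{p'}(K)}\,\|\nabla u_n-\vec{g}\|_{\cL^p_Q}\to 0,
$$
since Hypothesis~\ref{hyp:wt-matrix-hypotheses} puts $|(\sqrt{Q})^{-1}|_{\op}$ in $L^{p'}_{loc}(\Omega)$. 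A parallel estimate for the functions,
$$
\int_K |u_n-u|\,dx\leq \left(\int_K v^{1-p'}\,dx\right)^{1/p'}\|u_n-u\|_{L^p_v},
$$
needs $v^{1-p'}\in L^1_{loc}(\Omega)$; this follows from $w^{1-p'}\in L^1_{loc}(\Omega)$ (already noted in the paper) together with $0<w\le v$ and $1-p'<0$, which yield the pointwise bound $v^{1-p'}\le w^{1-p'}$. The endpoint $p=1$ is handled identically, replacing the two H\"older bounds by the $L^\infty_{loc}$ estimates that Hypothesis~\ref{hyp:wt-matrix-hypotheses} supplies when $p'=\infty$.

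Once both sequences converge in $L^1_{loc}(\Omega)$, the proof finishes by a standard distributional limiting argument. Since each $u_n$ is Lipschitz on $\overline{\Omega}$, for every $\phi\in C^\infty_c(\Omega)$ and every $i=1,\ldots,N$ the classical Green's identity
$$
\int_\Omega u_n\,\partial_i\phi\,dx=-\int_\Omega \partial_i u_n\,\phi\,dx
$$
holds, and passing $n\to \infty$ on the compact set $\mathrm{supp}(\phi)$ using the previous step yields
$$
\int_\Omega u\,\partial_i\phi\,dx=-\int_\Omega g_i\,\phi\,dx.
$$
This identifies $\vec{g}$ as the distributional gradient of $u$; since $u$ and $\vec{g}$ both lie in $L^1_{loc}(\Omega)$, we conclude $u\in W^{1,1}_{loc}(\Omega)$ with $\nabla u=\vec{g}$. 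The only delicate point in the argument is the bookkeeping required to extract both local H\"older inputs from Hypothesis~\ref{hyp:wt-matrix-hypotheses}, namely $|(\sqrt{Q})^{-1}|_{\op}\in L^{p'}_{loc}$ and $v^{1-p'}\in L^1_{loc}$; once these are in place the proof is essentially that of \cite[Theorem~5.2]{DavidKabeScott}.
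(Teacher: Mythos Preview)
Your proof is correct and follows essentially the same approach as the paper, which simply refers the reader to \cite[Theorem~5.2]{DavidKabeScott}; your argument is the natural reconstruction of that proof, extracting the $L^p_v$ and $\cL^p_Q$ limits, upgrading to $L^1_{loc}$ convergence via H\"older and Hypothesis~\ref{hyp:wt-matrix-hypotheses}, and then passing to the limit in the integration-by-parts identity.
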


The following results establish some basic properties of functions in
$\HQp(\Omega)$ similar to those of the classical Sobolev spaces.

\begin{prop}
\label{Prop: chain rule A}
Assume that $\phi\in C^1((0,\infty))\cap C([0,\infty))$ with $\phi(0)=0$. 
Given any $f\in \HQpc(\Omega)\cap L^\infty_v(\Omega)$ such that $f\geq c$ $v$-a.e. for some positive constant $c$, the function $\phi(f)\in \HQpc(\Omega)$. Moreover, $\nabla \phi(f) = \phi'(f) \nabla f $.
\end{prop}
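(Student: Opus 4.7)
The plan is to construct a sequence of Lipschitz functions converging to $\phi(f)$ in $\HQpc(\Omega)$, by first approximating $f$ itself and then composing with a truncation that keeps the image inside a compact interval on which $\phi$ is $C^1$.

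First I would observe that since $f \in L^\infty_v(\Omega)$ there exists $M > 0$ with $f \leq M$ $v$-a.e., so combined with $f \geq c > 0$ $v$-a.e.\ and Remark~\ref{rem:hyp-wt}(i), in fact $c \leq f \leq M$ Lebesgue-a.e. Because $\phi \in C^1([c/2, 2M])$, it is Lipschitz there with some constant $L$, and $\phi'$ is uniformly continuous there. By definition of $\HQpc(\Omega)$, choose $u_n \in C^{0,1}(\overline{\Omega})$ with $u_n \to f$ in $\HQpc(\Omega)$, and pass to a subsequence so that $u_n \to f$ $v$-a.e., hence (again by Remark~\ref{rem:hyp-wt}(i)) Lebesgue-a.e. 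Define the truncation $T(s) = \max(c/2, \min(s, 2M))$; then $T(u_n) \in C^{0,1}(\overline{\Omega})$ with $\nabla T(u_n) = \chi_{A_n}\nabla u_n$ a.e., where $A_n = \{c/2 < u_n < 2M\}$. Since $T$ is $1$-Lipschitz and fixes the range of $f$, $T(u_n) \to f$ in $L^p_v(\Omega)$. For the weighted gradient, decompose
\[
\sqrt{Q}(\nabla T(u_n) - \nabla f) = \sqrt{Q}\chi_{A_n}(\nabla u_n - \nabla f) + \sqrt{Q}(\chi_{A_n} - 1)\nabla f;
\]
the first term tends to $0$ in $L^p(\Omega)$ by the convergence $\nabla u_n \to \nabla f$ in $\cL^p_Q(\Omega)$, and since $\chi_{A_n} \to 1$ a.e.\ (as $c \leq f \leq M$), the second tends to $0$ by dominated convergence with dominant $|\sqrt{Q}\nabla f|^p \in L^1(\Omega)$. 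Thus $T(u_n) \to f$ in $\HQpc(\Omega)$ and $T(u_n)$ takes values in $[c/2, 2M]$.

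Next, since $\phi$ is $C^1$ on a neighborhood of $[c/2, 2M]$, the composition $\phi\circ T(u_n)$ is Lipschitz on $\overline{\Omega}$, and by the classical chain rule (via Rademacher) $\nabla \phi(T(u_n)) = \phi'(T(u_n))\nabla T(u_n)$ a.e. The Lipschitz bound $|\phi(T(u_n)) - \phi(f)| \leq L\,|T(u_n) - f|$ gives $\phi(T(u_n)) \to \phi(f)$ in $L^p_v(\Omega)$. For the weighted gradient, write
\[
\sqrt{Q}\bigl(\phi'(T(u_n))\nabla T(u_n) - \phi'(f)\nabla f\bigr) = \sqrt{Q}\,\phi'(T(u_n))(\nabla T(u_n) - \nabla f) + \sqrt{Q}\,(\phi'(T(u_n)) - \phi'(f))\nabla f.
\]
The first term has $L^p$ norm at most $L\,\|\sqrt{Q}(\nabla T(u_n) - \nabla f)\|_p \to 0$, while $T(u_n) \to f$ a.e., continuity of $\phi'$ on $[c/2, 2M]$, and dominated convergence (with dominant $(2L)^p|\sqrt{Q}\nabla f|^p \in L^1$) handle the second term. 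Hence $\phi(T(u_n)) \to \phi(f)$ in $\HQpc(\Omega)$, and Proposition~\ref{prop:weak-derivs} identifies the weak gradient of $\phi(f)$ as $\phi'(f)\nabla f$.

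The main technical point is ensuring that post-composition with the Lipschitz truncation $T$ preserves convergence in the weighted space $\cL^p_Q$; this is handled by the splitting displayed above together with Remark~\ref{rem:hyp-wt}(i), which upgrades the $v$-a.e.\ convergence of $u_n$ to Lebesgue-a.e.\ convergence and thereby legitimizes the dominated convergence argument against the unweighted Lebesgue measure appearing in $\|\cdot\|_{\cL^p_Q}$.
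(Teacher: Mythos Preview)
Your argument is correct and follows the standard approach that the paper implicitly invokes by citing \cite[Lemma~2.18]{DavidScott21}: truncate the approximating Lipschitz sequence into a compact subinterval of $(0,\infty)$ on which $\phi$ is $C^1$, then compose and pass to the limit using the splitting of the gradient error into a ``convergence'' piece and a ``coefficient'' piece handled by dominated convergence. Your use of Remark~\ref{rem:hyp-wt}(i) to upgrade $v$-a.e.\ statements to Lebesgue-a.e.\ ones is exactly the device needed to make the dominated convergence arguments work against the unweighted measure in $\cL^p_Q$, and your final appeal to Proposition~\ref{prop:weak-derivs} correctly identifies the limiting gradient.
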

\begin{proof}
The proof is similar to that of \cite[Lemma~2.18]{DavidScott21}.
\end{proof}

\begin{prop}
\label{Prop: chain rule A 2}
Assume that $\phi\in C^1((0,\infty))\cap C(\R)$ with $\phi(0)=0$. 
Given any $f\in \HQp(\Omega)\cap L^\infty_v(\Omega)$ such that $f\geq 0$ $v$-a.e. and $c>0$, the function $\left(\phi(f+c)- \phi(c) \right)\in \HQp(\Omega)$.
\end{prop}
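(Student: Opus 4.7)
The strategy is to build a sequence in $C^{0,1}_c(\Omega)$ that converges in $\HQpc(\Omega)$ to $\phi(f+c)-\phi(c)$. First, Proposition~\ref{Prop: chain rule A} applied to $f+c \in \HQpc(\Omega)\cap L^\infty_v(\Omega)$, which is bounded below by $c>0$, already shows $\phi(f+c) \in \HQpc(\Omega)$ with $\nabla \phi(f+c) = \phi'(f+c)\nabla f$; subtracting the constant $\phi(c)$ changes nothing. The real content is therefore the closure step: producing an approximating sequence in $C^{0,1}_c(\Omega)$.

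Since $f\in \HQp(\Omega)$, pick $f_n \in C^{0,1}_c(\Omega)$ with $f_n\to f$ in $\HQpc(\Omega)$. After passing to a subsequence, we may assume $f_n\to f$ pointwise $v$-a.e., hence Lebesgue-a.e. by Remark~\ref{rem:hyp-wt}(i). Set $M:=\|f\|_{L^\infty_v}$ and let $\pi(s):=\min(\max(s,0),M)$, a $1$-Lipschitz map with $\pi(f)=f$ a.e. Put $g_n:=\pi(f_n)\in C^{0,1}_c(\Omega)$, so $0\le g_n\le M$. I claim $g_n\to f$ in $\HQpc(\Omega)$. The $L^p_v$-convergence follows from $|g_n-f|\le|f_n-f|$. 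For gradients, the Lipschitz chain rule gives $\nabla g_n=\chi_{\{0<f_n<M\}}\nabla f_n$ a.e., and since $\nabla f=0$ a.e. on $\{f=0\}\cup\{f=M\}$, one has $\nabla f=\chi_{\{0<f<M\}}\nabla f$. Split
\[
\nabla g_n-\nabla f=\chi_{\{0<f_n<M\}}(\nabla f_n-\nabla f)+\bigl(\chi_{\{0<f_n<M\}}-\chi_{\{0<f<M\}}\bigr)\nabla f.
\]
The first term tends to $0$ in $\cL^p_Q(\Omega)$ by the Cauchy property of $\{f_n\}$; the second tends to $0$ by dominated convergence, since $\chi_{\{0<f_n<M\}}\to\chi_{\{0<f<M\}}$ a.e. on $\{f\ne0,M\}$ while $\nabla f$ vanishes on the two level sets, and the integrand is dominated by $2|\sqrt{Q}\,\nabla f|\in L^p$.

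With this in hand, set $h_n:=\phi(g_n+c)-\phi(c)$. Because $\phi$ is $C^1$ on the compact interval $[c,c+M]\subset(0,\infty)$ and hence Lipschitz there, $h_n$ is Lipschitz; moreover $h_n\equiv 0$ wherever $g_n=0$, in particular off $\mathrm{supp}\,f_n$, so $h_n\in C^{0,1}_c(\Omega)$. Writing $\psi(s):=\phi(s+c)-\phi(c)$, which is $C^1$ on $[0,\infty)$ since $s+c\ge c>0$, the identical splitting
\[
\nabla h_n-\nabla(\psi(f))=\psi'(g_n)(\nabla g_n-\nabla f)+\bigl(\psi'(g_n)-\psi'(f)\bigr)\nabla f,
\]
combined with the boundedness and continuity of $\psi'$ on $[0,M]$, the a.e. convergence $g_n\to f$, and dominated convergence (with domination $2\sup_{[0,M]}|\psi'|\cdot|\sqrt{Q}\,\nabla f|\in L^p$), yields $h_n\to\phi(f+c)-\phi(c)$ in $\HQpc(\Omega)$. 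Hence $\phi(f+c)-\phi(c)\in \HQp(\Omega)$. The main obstacle is the $\cL^p_Q$-convergence $\nabla g_n\to\nabla f$, because the truncation $\pi$ is not differentiable at $0$ and $M$ and the associated indicator functions converge only off the level sets $\{f=0\}\cup\{f=M\}$; the standard fact that the (weak) gradient vanishes a.e. on level sets is what renders the error harmless, and Remark~\ref{rem:hyp-wt}(i) is what lets us pass from $v$-a.e. to Lebesgue-a.e. convergence needed to apply dominated convergence against $|\sqrt Q\,\nabla f|^p$.
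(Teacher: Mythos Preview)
Your proof is correct and follows essentially the same strategy as the paper's: approximate $f$ by $f_n\in C^{0,1}_c(\Omega)$ and show $\phi(f_n+c)-\phi(c)\to\phi(f+c)-\phi(c)$ in $\HQp(\Omega)$, referring to the argument of \cite[Lemma~2.18]{DavidScott21}. Your explicit truncation $g_n=\pi(f_n)$ is a genuine improvement in rigor over the paper's sketch: the paper asserts ``clearly $\phi(f_n+c)-\phi(c)\in C^{0,1}_c(\Omega)$'', but without forcing $f_n\ge 0$ one cannot guarantee $f_n+c$ stays in the region where $\phi$ is $C^1$, so Lipschitz continuity of $\phi(f_n+c)$ is not automatic; your truncation cleanly resolves this.
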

\begin{proof}
Let $\{f_n\}_{n=1}^\infty\subset C^{0,1}_c(\Omega)$ be a sequence
that converges to $f$ in $\HQp(\Omega)$.  Clearly, 
$\{\phi(f_n+c)-\phi(c)\}_{n=1}^\infty\subset
C^{0,1}_c(\Omega)$. Furthermore  if we argue as in the proof of
\cite[Lemma~2.18]{DavidScott21}, it follows that
$
\phi(f_n+c)-\phi(c) \to \phi(f+c)- \phi(c)$ in $\HQp(\Omega)$.
\end{proof}

\begin{prop}
\label{Prop: cut-off A}
Assume that $f\in \HQp(\Omega)$.
For any $k\in \R$,   the functions $f_k^+=\max\{f,k\}$ and $f_k^-= \min\{f,k\}$
belong to $\HQp(\Omega)$. Moreover,
\[ \nabla f_k^+ = (\nabla f )\chi_{\{ f > k\}},
  \quad
  \nabla f_k^- = (\nabla f )\chi_{\{ f < k\}} \quad v \text{-a.e.}\]
\end{prop}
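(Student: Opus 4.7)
By the identity $\min\{f,k\} = -\max\{-f,-k\}$ (which also translates the gradient formula $\nabla f_k^- = \chi_{\{f<k\}}\nabla f$ into $\nabla(-(-f)_{-k}^+) = \chi_{\{-f > -k\}}\nabla f$), it suffices to prove the statement for $f_k^+$. The plan is to regularize the map $t \mapsto \max\{t,k\}$ by a family of $C^1$ functions, invoke the chain rule from Propositions~\ref{Prop: chain rule A}--\ref{Prop: chain rule A 2}, and pass to the limit in $\HQp(\Omega)$.

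First I would reduce to $f \in \HQp(\Omega) \cap L^\infty_v(\Omega)$ by an auxiliary bootstrap: cut $f$ at height $n$, apply the statement in that bounded setting, and let $n \to \infty$ using that the $\HQp$-norms stay uniformly bounded and the truncations converge monotonically in $L^p_v$ and weakly in gradient. For such bounded $f$, I would shift $f \mapsto f + c$ with $c > 0$ chosen so that $f + c \geq c > 0$ almost everywhere, which puts us in the hypothesis of Proposition~\ref{Prop: chain rule A 2}. The natural regularization is
\[
\phi_\epsilon(t) := k + \tfrac{1}{2}\Bigl((t-k) + \sqrt{(t-k)^2 + \epsilon^2}\,\Bigr) - \tfrac{\epsilon}{2},
\]
which is smooth on $\R$, converges uniformly to $\max\{\cdot,k\}$ as $\epsilon \to 0^+$, satisfies $0 \leq \phi_\epsilon' \leq 1$, and has $\phi_\epsilon'(t) \to \chi_{\{t>k\}}$ pointwise for every $t \neq k$. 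After the cosmetic subtraction required to enforce the normalization $\phi_\epsilon(0) = 0$, the chain rule places $\phi_\epsilon(f+c) - \phi_\epsilon(c)$ in $\HQp(\Omega)$ with weak gradient $\phi_\epsilon'(f+c)\nabla f$.

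For the convergence as $\epsilon \to 0^+$, the $L^p_v$ convergence follows from the uniform estimate $\|\phi_\epsilon - \max\{\cdot,k\}\|_\infty \leq \epsilon$ together with $v(\Omega) < \infty$. For the $\cL^p_Q$ convergence of the gradient, I would use the pointwise domination $|\sqrt{Q}\,\phi_\epsilon'(f+c)\nabla f| \leq |\sqrt{Q}\nabla f| \in L^p(\Omega)$ and the pointwise convergence $\phi_\epsilon'(f+c) \to \chi_{\{f > k\}}$ off the level set $\{f = k\}$, combined with the vanishing $\nabla f = 0$ almost everywhere on $\{f = k\}$. Dominated convergence then closes the limit, identifying $f_k^+ \in \HQp(\Omega)$ with $\nabla f_k^+ = \chi_{\{f > k\}}\nabla f$, and by the symmetry invoked at the outset one obtains $\nabla f_k^- = \chi_{\{f < k\}}\nabla f$.

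The main obstacle is the treatment of the level set $\{f=k\}$: the approximate indicators $\phi_\epsilon'$ do not converge pointwise there, and without additional input the candidate limiting gradient $\chi_{\{f>k\}}\nabla f$ would not even be unambiguously defined modulo null sets. The ingredient that rescues the argument is the classical Stampacchia lemma asserting that the weak gradient of a $W^{1,1}_{loc}$ function vanishes almost everywhere on its level sets. This applies in the present weighted setting because Proposition~\ref{prop:weak-derivs}, itself a consequence of Hypothesis~\ref{hyp:wt-matrix-hypotheses}, embeds $\HQp(\Omega)$ into $W^{1,1}_{loc}(\Omega)$ so that $f$ has genuine weak derivatives to which Stampacchia's result can be applied.
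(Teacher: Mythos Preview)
The paper itself defers to \cite[Lemma~2.14]{DavidScott21}, so there is no detailed argument to compare against; your regularization strategy (smooth $\max\{\cdot,k\}$, apply a chain rule, pass to the limit with dominated convergence) is standard and almost certainly matches that reference. Your identification of the level-set obstruction and its resolution via Proposition~\ref{prop:weak-derivs} and Stampacchia's lemma is correct and is the substantive ingredient.

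There is, however, a circularity in your reduction to bounded $f$. You propose to ``cut $f$ at height $n$'' and let $n\to\infty$, but forming $\min\{\max\{f,-n\},n\}$ and knowing it lies in $\HQp(\Omega)$ is precisely the proposition you are proving, applied to the \emph{unbounded} $f$ at levels $\pm n$; you cannot assume the conclusion for one family of $k$'s to deduce it for another. A related wrinkle: Proposition~\ref{Prop: chain rule A 2} requires its input to be nonnegative, and your shift $f\mapsto f+c$ to arrange this need not preserve membership in $\HQp(\Omega)$, since constants are not obviously in that space without further hypotheses on $Q$ near $\partial\Omega$. Both issues dissolve if you bypass Propositions~\ref{Prop: chain rule A}--\ref{Prop: chain rule A 2} and work directly with an approximating sequence $f_j\in C^{0,1}_c(\Omega)$ for $f$: then $\phi_\epsilon(f_j)-\phi_\epsilon(0)\in C^{0,1}_c(\Omega)$ with gradient $\phi_\epsilon'(f_j)\nabla f_j$ computed classically, and one passes to the limit first in $j$ (using $|\phi_\epsilon'|\le 1$ and continuity of $\phi_\epsilon'$) and then in $\epsilon$ exactly as you outline.
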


\begin{proof}
The proof is similar to that of \cite[Lemma~2.14]{DavidScott21}.
\end{proof}

\begin{prop}\label{Prop: density of ess bdd}
For $1\leq p<\infty$, $\HQp(\Omega)\cap L^\infty_v(\Omega)$ is dense in $\HQp(\Omega)$.
\end{prop}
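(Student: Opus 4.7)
The plan is to prove density by truncation: given $f \in \HQp(\Omega)$, approximate it by the truncated functions
$$T_k(f)(x) := \max\{-k,\min\{f(x),k\}\}, \qquad k \in \mathbb{N}.$$
Each $T_k(f)$ is bounded by $k$, so once we show $T_k(f) \in \HQp(\Omega)$ it lies in $\HQp(\Omega)\cap L^\infty_v(\Omega)$, and it remains to prove $T_k(f) \to f$ in $\HQp(\Omega)$.

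First I would verify that $T_k(f) \in \HQp(\Omega)$ by iterated application of Proposition~\ref{Prop: cut-off A}. Setting $g := \min\{f,k\} = f_k^-$, that proposition yields $g \in \HQp(\Omega)$ with $\nabla g = (\nabla f)\chi_{\{f<k\}}$. Applying the proposition again to $g$ with level $-k$, the function $T_k(f) = \max\{g,-k\} = g_{-k}^+$ lies in $\HQp(\Omega)$ with
$$\nabla T_k(f) = (\nabla g)\chi_{\{g>-k\}} = (\nabla f)\chi_{\{-k<f<k\}} \quad v\text{-a.e.}$$

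Next, I would prove convergence in the two components of the norm separately. For the $L^p_v$ part, $T_k(f) \to f$ pointwise $v$-a.e.\ as $k\to\infty$ (since $f$ is finite $v$-a.e., being in $L^p_v(\Omega)$), and $|T_k(f) - f|^p \leq 2^p|f|^p \in L^1_v(\Omega)$, so dominated convergence gives $\|T_k(f)-f\|_{L^p_v} \to 0$. For the gradient part,
$$\|\nabla T_k(f) - \nabla f\|_{\cL^p_Q}^p = \int_{\{|f|\geq k\}} |\sqrt{Q}\nabla f|^p\,dx.$$
Since $f$ is finite $v$-a.e., by Remark~\ref{rem:hyp-wt}(i) it is finite a.e.\ with respect to Lebesgue measure, so $\chi_{\{|f|\geq k\}} \to 0$ a.e. The integrand is dominated by $|\sqrt{Q}\nabla f|^p \in L^1(\Omega)$, and dominated convergence yields the claim.

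There is no serious obstacle, since Propositions~\ref{Prop: cut-off A} and~\ref{prop:weak-derivs} have already handled the delicate points (that $\nabla T_k(f)$ is genuinely the distributional gradient, and that Lebesgue-a.e.\ statements follow from $v$-a.e.\ statements). The only small point requiring attention is that the pointwise-a.e.\ convergence of the truncation uses finiteness of $f$ a.e., which follows from $f \in L^p_v \subset W^{1,1}_{loc}$ and Remark~\ref{rem:hyp-wt}(i).
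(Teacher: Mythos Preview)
Your proof is correct and follows essentially the same approach as the paper's: truncate $f$ at levels $\pm k$, invoke Proposition~\ref{Prop: cut-off A} to identify the gradient of the truncation, and conclude convergence in both pieces of the norm by dominated convergence. The only minor difference is that you pass from $v$-a.e.\ to Lebesgue-a.e.\ via Remark~\ref{rem:hyp-wt}(i), whereas the paper carries out the equivalent H\"older-inequality argument using $w^{1-p'}\in L^1_{loc}$ explicitly; your route is slightly cleaner but the content is the same.
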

\begin{proof}
  Fix $f\in \HQp(\Omega)$, and for each $k\in \N$, define
  $f_k:= \min\{\max\{f,-k\} , k\}$.  By Proposition~\ref{Prop: cut-off
    A}, $f_k \in \HQp(\Omega)\cap L^\infty_v(\Omega)$, so it remains
  to prove that $f_k \rightarrow f$ in $\HQp(\Omega)$.  We estimate
  each term in the norm separately.

  The first term is immediate: since $f_k \rightarrow f$  pointwise as
  $k\rightarrow \infty$, and $|f-f_k|^p \leq
  2^{p+1}|f|^p \in L_v^1(\Omega)$, by the dominated convergence
  theorem, $\|f-f_k\|_{L^p_v(\Omega)}\rightarrow 0$ as  $k\rightarrow
    \infty$.

    To estimate the second term, note that by Proposition~\ref{Prop: cut-off
    A}, $\sqrt{Q}\nabla f_k \rightarrow \sqrt{Q}\nabla f$ pointwise
  $v$-a.e.  Let $E\subset \Omega$ be the set where this convergence
  does not happen; then $v(E)=0$. 
  {An argument similar to that in Remark~\ref{rem:hyp-wt}(i) shows that $m(E)=0$.  Proposition~\ref{Prop: cut-off
    A}
  implies that $\sqrt{Q}\nabla f_k = \sqrt{Q}\nabla f \chi_{\{|f|<k\}}$
  almost everywhere.   Therefore,
  \[ |\sqrt{Q}(\nabla f-\nabla f_k)|^p =   |\sqrt{Q} (\nabla f -  \nabla f \chi_{\{|f|<k\}})|^p=  |\sqrt{Q} \nabla f  \chi_{\{|f|\geq k\}}|^p \leq  |  \sqrt{Q} \nabla  f|^p \]
  almost everywhere, and so again by the dominated convergence theorem,
   $\|\nabla f - \nabla f_k\|_{\cL_Q^p(\Omega)} \rightarrow
  0$ as $k\rightarrow \infty$. 
  }
  \end{proof}
  

\begin{prop}\label{Prop: density of L2}
$C^{0,1}_c(\Omega)$ is dense in  $\HQp(\Omega)\cap L^2_v(\Omega)$.
\end{prop}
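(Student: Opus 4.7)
The plan is to split by the value of $p$. When $p\in[2,\infty)$, the finite mass $v(\Omega)<\infty$ gives the embedding $L^p_v(\Omega)\hookrightarrow L^2_v(\Omega)$ (by H\"older, with $\|u\|_{L^2_v}\leq v(\Omega)^{1/2-1/p}\|u\|_{L^p_v}$), so any $\HQp$-approximation automatically converges in $L^2_v$, and the claim reduces to the definition of $\HQp(\Omega)$ as the closure of $C^{0,1}_c(\Omega)$. The substantive case is $1<p<2$, where $L^2_v\subsetneq L^p_v$ and one must ensure the approximating sequence does not blow up in $L^2_v$.

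For $1<p<2$, I would proceed by a two-step reduction. First, given $f\in \HQp\cap L^2_v$, set $f_k:=\min\{\max\{f,-k\},k\}$. By Proposition~\ref{Prop: density of ess bdd}, $f_k\in \HQp\cap L^\infty_v$ and $f_k\to f$ in $\HQp$. Since $|f_k-f|\leq 2|f|\in L^2_v$ and $f_k\to f$ pointwise a.e.\ (using Remark~\ref{rem:hyp-wt}(i)), the dominated convergence theorem gives $f_k\to f$ in $L^2_v$ as well. A diagonal argument then reduces the problem to approximating each bounded $h\in \HQp\cap L^\infty_v$ in $\HQp\cap L^2_v$ by functions in $C^{0,1}_c(\Omega)$.

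For this bounded case, fix $h$ with $M:=\|h\|_{L^\infty_v}$ and choose $\{g_n\}\subset C^{0,1}_c(\Omega)$ with $g_n\to h$ in $\HQp$. Define the truncated approximants $\tilde g_n:=T_M(g_n)$ where $T_M(s):=\min\{\max\{s,-M\},M\}$; these are still in $C^{0,1}_c(\Omega)$, satisfy $|\tilde g_n|\leq M$, and $T_M(h)=h$ a.e.\ by Remark~\ref{rem:hyp-wt}(i). The bound $\|\tilde g_n-h\|_{L^p_v}\leq\|g_n-h\|_{L^p_v}\to 0$ follows from the $1$-Lipschitz property of $T_M$, while $L^2_v$ convergence follows via DCT using $|\tilde g_n - h|\leq 2M$, $v(\Omega)<\infty$, and pointwise convergence along a subsequence, then passing to the full sequence by a standard subsequence-of-subsequence argument.

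The main obstacle is the $\cL^p_Q$ convergence of gradients. My plan here combines Proposition~\ref{Prop: cut-off A}, which gives $\nabla\tilde g_n=\nabla g_n\,\chi_{\{|g_n|<M\}}$, with the classical fact that $\nabla h=0$ a.e.\ on $\{|h|=M\}$ (valid because $h\in W^{1,1}_{loc}(\Omega)$ by Proposition~\ref{prop:weak-derivs}). Splitting
\[
\nabla\tilde g_n-\nabla h \;=\; (\nabla g_n-\nabla h)\,\chi_{\{|g_n|<M\}} \;+\; \nabla h\,\bigl(\chi_{\{|g_n|<M\}}-\chi_{\{|h|<M\}}\bigr),
\]
the first term tends to $0$ in $\cL^p_Q$ by the $\HQp$ convergence of $g_n$. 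For the second, the integrand $|\sqrt{Q}\nabla h|^p\,\bigl|\chi_{\{|g_n|<M\}}-\chi_{\{|h|<M\}}\bigr|$ is dominated by $|\sqrt{Q}\nabla h|^p\in L^1(\Omega)$ and converges to zero a.e.\ along a subsequence (since $\chi_{\{|g_n|<M\}}\to\chi_{\{|h|<M\}}$ on $\{|h|\neq M\}$, while $\nabla h=0$ on $\{|h|=M\}$); DCT concludes.
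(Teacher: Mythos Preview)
Your proof is correct and follows essentially the same two-step strategy as the paper: first reduce to bounded $h\in\HQp\cap L^\infty_v$ via truncation, then truncate the Lipschitz approximants themselves to control the $L^2_v$ norm. The paper does not split off the case $p\geq 2$ as you do (your observation that $\HQp$-convergence already implies $L^2_v$-convergence when $p\geq 2$ is a nice shortcut), but otherwise the outlines coincide.

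The one substantive difference is the truncation level. You cut the Lipschitz approximants at exactly $M=\|h\|_{L^\infty_v}$, which forces you to deal with the borderline set $\{|h|=M\}$; you handle this correctly by invoking $h\in W^{1,1}_{loc}$ and the classical fact that $\nabla h=0$ a.e.\ on level sets. The paper instead truncates at $2L$, strictly above $\|f\|_{L^\infty_v}$. This makes the gradient argument cleaner: since $|f|\leq L<2L$ a.e., one has $\chi_{\{|\psi_k|\geq 2L\}}\to 0$ a.e.\ along a subsequence with no level-set subtlety, and the second term in the gradient decomposition reduces to $-\nabla f\,\chi_{\{|\psi_k|\geq 2L\}}$, which tends to $0$ in $\cL^p_Q$ by dominated convergence directly. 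Your approach works, but truncating at any level strictly above $M$ would have let you drop the appeal to the level-set lemma entirely.
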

\begin{proof}
  By Proposition~\ref{Prop: density of ess bdd}, it will suffice to show
  that for any $f\in \HQp(\Omega) \cap L^\infty_v(\Omega)$, there
  exist $\phi_k\in C^{0,1}_c(\Omega)$ such that $\phi_k \to f$ in
  $\HQp(\Omega)\cap L^2_v(\Omega)$.

By the definition of $\HQp(\Omega)$, there exists a sequence
$\psi_k\in C^{0,1}_c(\Omega)$ such that $\psi_k \to f$ in
$\HQp(\Omega)$ as $k\to \infty$.  By passing to successive
subsequences, we may assume that $\psi_k \rightarrow f$ pointwise
$v$-a.e., and $\sqrt{Q}\nabla \psi_k \rightarrow \sqrt{Q}\nabla f$
a.e.

Let $L=\|f\|_{L^\infty_v(\Omega)}$ and define
$\phi_k=\min\{\max\{\psi_k,-2L\} , 2L\} \in C^{0,1}_c(\Omega)$.   Then
$\phi_k \rightarrow f$ pointwise $v$-a.e., and if we argue as we did
in the proof of Proposition~\ref{Prop: density of ess bdd}, we get
that $\phi_k \rightarrow f$ in $\HQp(\Omega)$.

Finally, to see convergence in $L^2_v(\Omega)$, note that
$|\phi_k-f|^2 \leq 2^2(|\phi_k|^2+|f|^2) \leq 20L^2< \infty$.
By Hypothesis~\ref{hyp:wt-matrix-hypotheses}, $v\in L^1(\Omega)$, so
by the dominated convergence theorem, $\phi_k\rightarrow f$ in
$L^2_v(\Omega)$ as $k\rightarrow \infty$.
\end{proof}

\section{Nonlinear semigroup theory}
\label{Section:semigroup}

In order to find weak solutions to~\eqref{p-Lap-Dirichlet}, we will
apply nonlinear semigroup theory; our main result is
Proposition~\ref{Prop: maximal accretivee} below.   To state and prove
it, however, we must first introduce some abstract definitions and
results.   The main references for this appendix are \cite{Barbu, BenilanThesis, CraLig71,
  Showalter, MR2722295}.   Let $(\Omega,\mathcal{B},\mu)$ be a $\sigma$-finite
measure space, let $M(\Omega)$ be the set of all $\mu$-measurable
functions on $\Omega$, and for $q\in [1,\infty)$, let $L^q(\Omega)$
the corresponding $L^q$ space defined with respect to $\mu$.  
Let  $H$ be the Hilbert space $L^2(\Omega)$ and denote the inner
product by $\langle \cdot , \cdot \rangle_H$ and the norm by $\|\cdot\|_H$.
%


\medskip
\begin{definition}[Operator]
\label{Def: operator}
A map $A:H\to 2^H$ is called an operator on $H$. 
The {\em effective domain} of $A$ is the set
$$
D (A)=\{x\in H: A(x) \neq \emptyset \}.
$$
The {\em range} of $A$ is the set 
$$
{\rm Rng} (A) = \bigcup_{x \in H} A(x).
$$
The {\em graph} of $A$ is the set
$$
\gr(A)=\{(x,y)\in H\times H: \,  y\in A(x)\}.
$$
\end{definition}
An operator $A$ on $H$ can be considered as a (possibly) multi-valued function $A: D(A)\subset H \to H$.

\medskip
\begin{definition}[Maximal Monotone]
\label{Def: maximal monotone}
\begin{itemize}
\item[]
\item[(i)] An  operator
  $A: D(A)\to H$ is called {\em monotone} if
\begin{equation}
\label{A: monotone}
\langle y_1-  y_2 , x_1-x_2 \rangle_H \geq 0,  \quad (x_1,y_1),(x_2,y_2)\in  {\rm gr}(A). 
\end{equation}

\item[(ii)] An operator $A$ is called {\em maximal
    monotone} if $A$ is monotone and there is no monotone operator
  $\Psi$ such that ${\rm gr}(A)$ is a proper subset of
  ${\rm gr}(\Psi)$.
\end{itemize}
\end{definition}

\medskip
\begin{definition}[$m$-Accretivity] \label{Def: m-Accretivity}
  \ \\
  \begin{itemize}

\item[(i)] An operator
  $A: D(A)\to H$ is called {\em accretive} if for all
  $\lambda>0$
\begin{equation}
\label{A: contraction}
\|x_1-x_2 + \lambda(y_1-y_2)\|_H\geq \|x_1-x_2\|_H,\quad (x_1,y_1),(x_2,y_2)\in  {\rm gr}(A).
\end{equation}
\item[(ii)] An operator $A$ on $H$  is called {\em $m$-accretive} if $A$ is accretive and it satisfies the range condition
$$
{\rm Rng}(\id_H +\lambda A)=H, \quad \forall \lambda>0,
$$
where $id_H$ is the identity map on $H$.
\end{itemize}
\end{definition}

Given a function  $\Phi: X\subset H\to (-\infty,+\infty]$, we can extend $\Phi$ to $H$ by defining
$$
\Phi(u)=+\infty,\quad u\in H\setminus X.
$$
The {\em effective domain} of a function $\Phi:H\to (-\infty,+\infty]$ is the set 
$$
D(\Phi)=\{u\in H: \Phi(u)<+\infty\}.
$$
$\Phi$ is called {\em proper} if $D(\Phi)\neq \emptyset$ and {\em convex} if
$$
\Phi(\alpha x + (1-\alpha)y)\leq \alpha \Phi(x) + (1-\alpha)\Phi(y)
$$
for all $\alpha\in [0,1]$ and $x,y\in H$.

Given a proper, convex function  $\Phi:H\to (-\infty,+\infty]$,  the {\em subdifferential} $\partial \Phi$ of $\Phi$ is the operator $\partial\Phi: H\to 2^H$  defined by 
\begin{equation}
\label{Def: subgradient}
w\in \partial\Phi(z) \Longleftrightarrow \Phi(z)<+\infty  \quad \text{and} \quad \Phi(x)\geq \Phi(z) + \langle w , x-z \rangle_H ,\quad \forall x\in H.
\end{equation}
It is immediate from the definition of the subdifferential that
\begin{equation}
\label{min and subdifferential}
u  \text{ minimizes } \Phi \text{ within } H \quad \text{if and only if }\quad 0\in \partial \Phi(u).
\end{equation}

We now present several important facts concerning   subdifferentials and $m$-accretive operators.

\begin{prop}\label{Prop: m-accretive}
If $\Phi: H \to (-\infty,+\infty]$ is convex, proper and lower semicontinuous (l.s.c.), then $\partial \Phi$ is maximal monotone with $D(\partial\Phi)\subset D(\Phi)$ and $\overline{D(\partial \Phi)}=\overline{D(\Phi)}$.
\end{prop}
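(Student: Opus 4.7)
The plan is to prove this as the classical Rockafellar theorem, in four steps.

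First, monotonicity of $\partial\Phi$ is immediate from the definition: given $(x_i, y_i) \in \gr(\partial\Phi)$ for $i=1,2$, adding the two defining inequalities $\Phi(x_2) \geq \Phi(x_1) + \langle y_1, x_2 - x_1\rangle_H$ and $\Phi(x_1) \geq \Phi(x_2) + \langle y_2, x_1 - x_2\rangle_H$ yields $\langle y_1 - y_2, x_1 - x_2\rangle_H \geq 0$. The inclusion $D(\partial\Phi) \subset D(\Phi)$ is also built directly into \eqref{Def: subgradient}.

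Second, to upgrade monotonicity to maximal monotonicity, I would invoke Minty's theorem: in a Hilbert space, a monotone operator $A$ is maximal monotone if and only if $\mathrm{Rng}(\id_H + A) = H$. I would verify this range condition by a variational argument. Fix $f \in H$ and consider
\[
J(v) := \tfrac{1}{2}\|v - f\|_H^2 + \Phi(v).
\]
The key preliminary fact is that every proper, convex, l.s.c.\ $\Phi$ on a Hilbert space admits an affine minorant (a Hahn--Banach separation applied to $\mathrm{epi}(\Phi) \subset H \times \mathbb{R}$ and a point strictly below it). This minorant makes $J$ strictly convex, l.s.c., and coercive, so $J$ attains its infimum at a unique $u \in D(\Phi)$. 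By \eqref{min and subdifferential}, $0 \in \partial J(u)$; since the quadratic term is Fréchet differentiable, the sum rule (or a direct check from the definition of the subdifferential) gives $0 \in (u-f) + \partial\Phi(u)$, hence $f \in u + \partial\Phi(u) \subset \mathrm{Rng}(\id_H + \partial\Phi)$.

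Third, the inclusion $\overline{D(\partial\Phi)} \subset \overline{D(\Phi)}$ follows from $D(\partial\Phi)\subset D(\Phi)$, proven in step one. For the reverse inclusion, I would reuse the resolvent construction above: for $x \in D(\Phi)$ and $\lambda > 0$, let $x_\lambda$ be the unique minimizer of $v \mapsto \frac{1}{2\lambda}\|v - x\|_H^2 + \Phi(v)$, so that $x_\lambda \in D(\partial\Phi)$. Testing against $v = x$ yields
\[
\tfrac{1}{2\lambda}\|x_\lambda - x\|_H^2 + \Phi(x_\lambda) \leq \Phi(x),
\]
and combining this with the affine lower bound on $\Phi(x_\lambda)$ forces $\|x_\lambda - x\|_H \to 0$ as $\lambda \to 0^+$. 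Hence $x \in \overline{D(\partial\Phi)}$.

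The main technical obstacle is the existence and uniqueness of the minimizer $u$ of $J$, which in turn hinges on the affine minorant for $\Phi$; this is the only place where a nontrivial separation argument is needed. Once that is in hand, steps one, three, and four are formal manipulations of the definitions, and Minty's theorem packages step two into maximal monotonicity.
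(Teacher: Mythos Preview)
Your proof is correct and follows the classical Rockafellar route: monotonicity from the definition, maximal monotonicity via Minty's surjectivity criterion applied to the Moreau--Yosida minimization, and density of $D(\partial\Phi)$ in $D(\Phi)$ via resolvent approximation. Each step is handled cleanly; the only slightly delicate point---that the affine minorant combined with the minimization inequality forces $\|x_\lambda - x\|_H \to 0$---does go through after a short quadratic-in-$r$ estimate, which you might make explicit.

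The paper, however, does not prove this proposition at all: it is stated without proof as a standard fact from nonlinear semigroup theory, with the references \cite{Barbu, BenilanThesis, CraLig71, Showalter, MR2722295} given at the start of Section~\ref{Section:semigroup}. So you have supplied a full argument where the paper simply cites the literature. Your approach is exactly the one found in those references (e.g., Barbu or Showalter), so there is no methodological divergence to discuss---you have reproduced the textbook proof that the authors are implicitly invoking.
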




\begin{theorem}[Minty's Theorem]\label{Minty theorem}
Let $A$ be an accretive operator on $H$. Then $A$ is $m$-accretive if and only if $A$ is maximal monotone.
\end{theorem}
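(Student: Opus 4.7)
The plan is to reduce accretivity to monotonicity using the Hilbert space structure, and then prove each direction of the equivalence separately. Squaring the accretivity inequality \eqref{A: contraction}, cancelling $\|x_1-x_2\|_H^2$, and dividing by $\lambda>0$ yields
\begin{equation*}
2\langle y_1-y_2,x_1-x_2\rangle_H + \lambda\|y_1-y_2\|_H^2 \geq 0 .
\end{equation*}
Letting $\lambda\to 0^+$ gives monotonicity \eqref{A: monotone}; conversely, the same identity shows that monotonicity implies accretivity for every $\lambda>0$. So in $H$ the two notions coincide, and it remains to show that a monotone operator $A$ is maximal if and only if it satisfies the range condition.

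For the easy direction ($m$-accretive $\Rightarrow$ maximal monotone), suppose $(x_0,y_0)\in H\times H$ satisfies $\langle y_0-y,x_0-x\rangle_H\geq 0$ for every $(x,y)\in\gr(A)$. By the range condition with $\lambda=1$, choose $(\tilde{x},\tilde{y})\in\gr(A)$ such that $\tilde{x}+\tilde{y}=x_0+y_0$, so $y_0-\tilde{y}=-(x_0-\tilde{x})$. Substituting into the hypothesis yields $-\|x_0-\tilde{x}\|_H^2\geq 0$, which forces $x_0=\tilde{x}$ and $y_0=\tilde{y}$; hence $(x_0,y_0)\in\gr(A)$ and $A$ is maximal.

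For the harder direction (maximal monotone $\Rightarrow$ range condition), fix $\lambda>0$ and set $R_\lambda:={\rm Rng}(\id_H+\lambda A)$. Monotonicity immediately gives the contraction estimate $\|x_1-x_2\|_H\leq \|z_1-z_2\|_H$ whenever $z_i=x_i+\lambda y_i$ with $y_i\in A(x_i)$, so the resolvent $J_\lambda:=(\id_H+\lambda A)^{-1}$ is single-valued and nonexpansive on $R_\lambda$. From this one deduces that $R_\lambda$ is closed: a Cauchy sequence $\{z_n\}\subset R_\lambda$ produces strongly Cauchy $x_n=J_\lambda z_n\to x$ and $y_n=\lambda^{-1}(z_n-x_n)\to y$, and passage to the limit in the monotonicity inequality against any fixed $(x',y')\in\gr(A)$, combined with the maximality of $A$, places $(x,y)$ in $\gr(A)$, so $z=x+\lambda y\in R_\lambda$.

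The main obstacle is proving that $R_\lambda$ is all of $H$. One cannot invoke the Yosida regularization $A_\mu=\mu^{-1}(\id_H-J_\mu)$ directly, since defining $J_\mu$ already requires the range condition. The classical workaround is a two-step approximation: first approximate $A$ by bounded or Lipschitz monotone operators $A_n$ for which $\id_H+\lambda A_n$ is surjective by a direct Banach fixed-point argument (after translating so that a point of $D(A)$ sits at the origin and using the contraction estimate), solve the approximate equations $z = x_n + \lambda A_n(x_n)$ for the fixed target $z$, and then pass to the limit in $n$. The uniform nonexpansive bound on the approximate resolvents yields boundedness and weak compactness of $(x_n,y_n)$ in $H\times H$, and maximality of $A$ is precisely what is needed to upgrade weak limits of the approximating pairs into an element of $\gr(A)$, producing a solution $x\in D(A)$ with $z\in x+\lambda A(x)$. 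This interplay between the monotonicity-driven compactness of resolvents and the maximality-driven graph closure is the technical heart of Minty's theorem.
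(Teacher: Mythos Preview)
The paper does not prove Minty's theorem; it is stated without proof as a standard result from the nonlinear semigroup literature (the references \cite{Barbu, BenilanThesis, CraLig71, Showalter, MR2722295} listed at the start of Section~\ref{Section:semigroup}). So there is nothing to compare against, and your proposal should be judged on its own.

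Your reduction of accretivity to monotonicity via the parallelogram identity is correct, as is the argument that the range condition implies maximality. The closedness of $R_\lambda$ is also fine. The gap is in the surjectivity step. You correctly diagnose that the Yosida approximation is circular here, but your proposed ``classical workaround'' is not actually a proof: you assert the existence of bounded or Lipschitz monotone approximants $A_n$ without saying how to construct them, and this construction is exactly where the content lies. One cannot simply truncate a multivalued maximal monotone operator and remain monotone, nor mollify it without a linear structure on the graph. The standard proofs proceed either by a Galerkin scheme (project onto finite-dimensional subspaces $H_n$, solve $P_n x_n + \lambda P_n A x_n \ni P_n z$ via Brouwer's fixed-point theorem using coercivity of $x\mapsto x+\lambda A x$, then pass to the limit using weak compactness and the maximality of $A$ to close the graph), or by Minty's original variational device, or---in the Hilbert setting---by showing directly that $R_\lambda$ is both open and closed in $H$ via a perturbation argument. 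As written, your final paragraph names the destination and the passengers but not the vehicle; to make it a proof you need to supply one of these explicit approximation mechanisms.
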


In order to construct a solution to
  \eqref{p-Lap-Dirichlet} by means of an implicit Euler scheme, we
  need the definition of an $\varepsilon$-approximate solution and a
  mild solution to the abstract Cauchy problem:
\begin{equation}
\label{abstract Cauchy pb}
\begin{split}
\partial_t u (t) + A u(t) &\ni 0 ,  \quad  t\in (0,T),\\
u(0)&= x_0.
\end{split}
\end{equation}

\begin{definition}[$\varepsilon$-Approximate Solution] \label{defn:approximate-soln}
  An $\varepsilon$-approximate solution $u (t)$ to \eqref{abstract
    Cauchy pb} on $[0,T]$ is a piecewise constant function defined by
  $u(0)=u_0$ with $\| x_0 -u_0 \|_H <\varepsilon$ and
\begin{align*}
u (t) = u_k \quad \text{ on } (t_{k-1}, t_k] ,\quad k=1,2,\cdots, n,
\end{align*}
where $0=t_0 <t_1 <\cdots < t_n =T$ with $t_k-t_{k-1} \leq \varepsilon$ and 
\begin{align*}
\frac{u_k -u_{k-1}}{t_k - t_{k-1}} + A u_k \ni 0, \quad k=1,2,\cdots, n.
\end{align*}
\end{definition}

\begin{definition}[Mild Solution] \label{defn:mild-soln}
  A mild solution to \eqref{abstract Cauchy pb} on $[0,T]$ is a
  function $u \in C([0,T];H)$    such that    for every $\varepsilon>0$
  there is an $\varepsilon$-approximate solution $v$ of
  \eqref{abstract Cauchy pb} on $[0,T]$ so that
{
\begin{equation*}  
\| u(t) - v(t) \|_H \leq  \varepsilon , \quad  t\in [0,T] 
\end{equation*}
Suppose that for every $\varepsilon>0$, \eqref{abstract Cauchy pb} has
$\varepsilon$-approximate solutions on $[0,T]$.  We say that the
$\varepsilon$-approximate solutions converge on $[0,T]$ to
$u \in C([0,T];H)$ as $\varepsilon \searrow 0$ if there exists a
function $\psi: [0,\infty)\to [0,\infty)$ with
$\lim\limits_{\varepsilon \searrow 0} \psi(\varepsilon)=0$ such that,
for every $\varepsilon$-approximate solution
$v$, \begin{equation} \label{mild solution converg} \| u(t) - v(t)
  \|_H \leq \psi(\varepsilon), \quad t\in [0,T] .
\end{equation}
}
\end{definition}

The following existence and uniqueness  theorem is due to Crandall and
Liggett~\cite{CraLig71}, and  B\'enilan~\cite{BenilanThesis}; see
also~\cite[Theorems A.27 and A.29]{MR2722295}.

\begin{theorem}\label{Benilan theorem}
Let $A $ be an $m$-accretive operator on $H$ and $T>0$.  
Then for every $x_0\in \overline{D(A)}$, there exists a unique mild
solution $u \in C([0,T];H)$ of \eqref{abstract Cauchy pb} such that
all $\varepsilon$-approximate solutions converges to $u$.
\end{theorem}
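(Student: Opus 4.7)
The plan is to construct the mild solution via the implicit Euler (Crandall--Liggett) scheme and pass to the limit as the mesh size vanishes, with accretivity supplying all of the needed contraction and error control. First, I would exploit $m$-accretivity to build the resolvent $J_\lambda := (\id_H + \lambda A)^{-1}$: the range condition guarantees that $\id_H + \lambda A$ is surjective for every $\lambda > 0$, and the accretivity inequality \eqref{A: contraction} (used with that same $\lambda$) forces $J_\lambda$ to be single-valued and a contraction from $H$ into $D(A)$. For $x_0 \in D(A)$ and a partition $0 = t_0 < t_1 < \cdots < t_n = T$ with step sizes $h_k := t_k - t_{k-1} \le \varepsilon$, choosing $u_0$ within $\varepsilon$ of $x_0$ and setting $u_k := J_{h_k} u_{k-1}$ inductively produces an $\varepsilon$-approximate solution in the sense of Definition~\ref{defn:approximate-soln}.

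The central step, which is also the principal obstacle, is the Crandall--Liggett a priori estimate comparing two such schemes. Writing $a_{k,j} := \|u_k - \tilde u_j\|_H$ for the nodes produced by step sizes $h_k$ and $\delta_j$, accretivity applied at each step yields the fundamental recursion
$$a_{k,j} \le \frac{\delta_j}{h_k + \delta_j}\, a_{k,j-1} + \frac{h_k}{h_k + \delta_j}\, a_{k-1,j}.$$
A double induction on $(k,j)$, solved with binomial weights that one can interpret as moments of a random walk whose mean tracks $t$ and whose variance is of order $\varepsilon t$, yields
$$\|u^\varepsilon(t) - \tilde u^\delta(s)\|_H \le \|u_0 - \tilde u_0\|_H + C\, |Au_0|_H\, \sqrt{(t-s)^2 + \varepsilon(t+\varepsilon) + \delta(s+\delta)},$$
where $|Au_0|_H$ denotes the norm of the minimal-norm element of $Au_0$. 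Taking $s = t$ and sending $\varepsilon, \delta \to 0$ shows that the family $\{u^\varepsilon\}$ is Cauchy in $C([0,T]; H)$, with a uniform modulus of continuity $\psi(\varepsilon)$ built from the right-hand side; the limit $u$ is, by Definition~\ref{defn:mild-soln}, a mild solution, and all $\varepsilon$-approximate solutions converge to it. Uniqueness comes from applying the same estimate to any two candidate mild solutions.

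Finally, the extension from $x_0 \in D(A)$ to $x_0 \in \overline{D(A)}$ is a routine approximation argument: pick $x_0^n \in D(A)$ with $x_0^n \to x_0$, let $u^n \in C([0,T]; H)$ be the mild solution with datum $x_0^n$ produced above, and use the inherited contraction
$$\|u^n(t) - u^m(t)\|_H \le \|x_0^n - x_0^m\|_H$$
(which is the limit through the scheme of the contraction property of each $J_\lambda$) to extract a uniform-in-$t$ Cauchy limit $u \in C([0,T]; H)$. Since mild solutions are stable under such $H$-limits of initial data (with modulus $\psi$ inherited from the approximating $u^n$), $u$ is the desired mild solution. The sole technical hurdle throughout is the binomial bookkeeping behind the Crandall--Liggett estimate; everything else is a direct consequence of the resolvent contraction and the definitions.
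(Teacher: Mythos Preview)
The paper does not prove this theorem at all: it is quoted as a classical result, with the proof attributed to Crandall--Liggett~\cite{CraLig71} and B\'enilan~\cite{BenilanThesis} (see also~\cite[Theorems~A.27 and~A.29]{MR2722295}). Your proposal is a faithful sketch of precisely that Crandall--Liggett argument---resolvent construction, the double-index recursion for $\|u_k - \tilde u_j\|_H$, the random-walk/binomial estimate yielding a uniform Cauchy bound, and the density extension to $x_0 \in \overline{D(A)}$---so in substance your approach \emph{is} the paper's approach, just spelled out rather than cited.
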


To state our final definition and result, we introduce some notation.  
We define the class
\begin{equation}
\label{class J0}
J_0=\{\text{convex, l.s.c. functions } j:\R\to [0, \infty] \text{ satisfying }j(0)=0\}.
\end{equation}
For $u,v \in M(\Omega)$, we write
\begin{equation}
\label{complete accretive cond}
u\ll v \quad \text{iff}\quad \int_\Omega j(u)\, d\mu \leq \int_\Omega j(v)\, d\mu ,\quad \text{for all }j\in J_0.
\end{equation}


\medskip
\begin{definition}
  \label{Def: Complete Accretivity}
An operator $A$ on $M(\Omega)$ is called {\em completely accretive} if 
$$
u_1 - u_2 \ll u_1 - u_2 + \lambda (v_1 - v_2)
$$
for all $\lambda>0$ and $(u_i,v_i)\in {\rm gr}(A)$ with $i=1,2$.
\end{definition}

{Note that when our Hilbert space is $L^2$, if we take $j(x)=x^2$
in~\eqref{complete accretive cond}, we get that completely accretive
implies accretive.}

To give a characterization of completely accretive operators, we define
\begin{equation}
\label{P0}
P_0 = \{f\in C^\infty(\R): \, 0\leq f^\prime\leq 1 , \, \text{{\rm supp}$(f^\prime)$ is compact, and } 0\notin \text{{\rm supp}$(f)$} \}.
\end{equation}

\begin{prop}\label{Prop: complete accretive}
An operator $A$ on  $L^q(\Omega)$ for
some $q \in (1,\infty)$ is completely accretive if and only if
$$
\int_\Omega f(u_1-u_2)(v_1-v_2) \,d\mu \geq 0 
$$
for all $f\in P_0$ and $(u_i,v_i)\in {\rm gr}(A)$, $i=1,\,2$.
\end{prop}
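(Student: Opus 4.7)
The plan is to prove the two implications by passing between $P_0$ and $J_0$ via differentiation and integration. For the forward direction, given $f\in P_0$ I set $j(t):=\int_0^t f(s)\,ds$; because $0\notin\mathrm{supp}(f)$ forces $f(0)=0$, and $f$ is smooth and non-decreasing with bounded slope, one checks directly that $j\in J_0$ (it is $C^1$, convex, non-negative on both sides of $0$, with $j(0)=0$). The complete accretivity condition applied to this particular $j$ reads
$$
\int_\Omega \frac{j(u_1-u_2+\lambda(v_1-v_2))-j(u_1-u_2)}{\lambda}\,d\mu \;\ge\; 0
$$
for every $\lambda>0$. Since $\mathrm{supp}(f)$ is compact and stays away from $0$, for $\lambda\in(0,1]$ the integrand is supported on the set $\{|u_1-u_2|\ge\delta\}\cup\{|u_1-u_2+\lambda(v_1-v_2)|\ge\delta\}$, which has uniformly finite measure by Markov's inequality (using $u_i,v_i\in L^q(\Omega)$), and there the integrand is dominated by $\|f\|_\infty|v_1-v_2|$, which is integrable on that set by H\"older's inequality. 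Dominated convergence as $\lambda\searrow 0$ yields $\int_\Omega f(u_1-u_2)(v_1-v_2)\,d\mu\ge 0$.

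For the backward direction, given $j\in J_0$ and $\lambda>0$, consider the convex function
$$
\phi(t):=\int_\Omega j(u_1-u_2+t(v_1-v_2))\,d\mu\in[0,\infty].
$$
The convexity inequality $j(a+\lambda b)\ge j(a)+\lambda j'_+(a)\,b$ integrates to
$$
\phi(\lambda)\ge\phi(0)+\lambda\int_\Omega j'_+(u_1-u_2)(v_1-v_2)\,d\mu,
$$
so (setting aside the trivial case $\phi(\lambda)=+\infty$) it suffices to show the last integral is non-negative. Since $j\ge 0$ attains its minimum at $0$, the non-decreasing function $j'_+$ is $\ge 0$ on $(0,\infty)$ and $\le 0$ on $(-\infty,0)$. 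I approximate $j'_+\chi_{(0,\infty)}$ monotonically from below by smooth non-decreasing functions with compactly supported derivatives that vanish in a neighborhood of $0$, and do the analogous construction for $j'_+\chi_{(-\infty,0)}$ (working with the non-decreasing surrogate). After rescaling each approximant by a positive constant so that its derivative is bounded by $1$ (the constant factors out of the linear inequality), the approximants lie in $P_0$, the hypothesis gives a non-negative integral for each of them, and monotone convergence on the two half-lines $\{u_1-u_2>0\}$ and $\{u_1-u_2<0\}$ separately delivers $\int_\Omega j'_+(u_1-u_2)(v_1-v_2)\,d\mu\ge 0$.

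The main obstacle is the technical bookkeeping needed to justify these limit exchanges on a $\sigma$-finite measure space, since elements of $J_0$ are allowed to take the value $+\infty$ and $u_i,v_i$ are only in $L^q$. The key features of $P_0$ that make the argument go through are precisely the compact support of $f'$ (which localizes the forward integrand and produces integrable dominating functions) and the vanishing of $f$ near $0$ (which forces the decomposition of $j'_+$ into its positive and negative parts when building approximants in the backward direction).
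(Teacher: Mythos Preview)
The paper does not prove this proposition; it is quoted as a standard result from the nonlinear semigroup references (B\'enilan, Crandall--Liggett, etc.). So there is no ``paper's proof'' to compare to, and I will assess your argument on its own merits.

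Your forward direction is correct. One cosmetic point: the set $\{|u_1-u_2|\ge\delta\}\cup\{|u_1-u_2+\lambda(v_1-v_2)|\ge\delta\}$ depends on $\lambda$, so it cannot serve directly as the support of a single dominating function. Replace it by the $\lambda$--independent set $\{|u_1-u_2|\ge\delta/2\}\cup\{|v_1-v_2|\ge\delta/2\}$ (for $\lambda\le 1$ the integrand vanishes off this set), and your dominated--convergence argument goes through verbatim.

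Your backward direction has a genuine gap. Write $w=u_1-u_2$, $z=v_1-v_2$. You claim that, after approximating $j'_+$ from below on each half--line by rescaled elements of $P_0$, ``monotone convergence on the two half--lines separately'' yields $\int j'_+(w)\,z\,d\mu\ge 0$. But $z$ has no sign, so $f_n(w)\,z$ is \emph{not} monotone in $n$ on $\{w>0\}$; splitting further by the sign of $z$ gives four pieces whose limits may be $+\infty$ and $-\infty$ simultaneously, so the integral $\int j'_+(w)\,z\,d\mu$ need not even be defined. Relatedly, the case $\phi(0)=\int j(w)\,d\mu=+\infty$ is not covered: the pointwise subgradient inequality $j(w+\lambda z)\ge j(w)+\lambda j'_+(w)z$ is meaningless where $j(w)=+\infty$, and you must still show $\phi(\lambda)=+\infty$ there.

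The standard repair is to first reduce to a generating subfamily of $J_0$ on which everything is finite. One shows (this is the B\'enilan--Crandall lemma) that $u\ll v$ holds iff $\int(u-k)^+\le\int(v-k)^+$ and $\int(-k-u)^+\le\int(-k-v)^+$ for every $k>0$. For $j(t)=(t-k)^+$ with $k>0$, the relevant integrals are finite because $(w-k)^+$ is supported on the finite--measure set $\{w\ge k\}$ and lies in $L^1$ by H\"older. Approximating $\chi_{(k,\infty)}$ by (rescaled) $P_0$ functions and using \emph{dominated} convergence with majorant $|z|\chi_{\{w\ge k\}}\in L^1$ gives $\int_{\{w>k\}}z\,d\mu\ge 0$; then the subgradient inequality $(w+\lambda z-k)^+\ge(w-k)^++\lambda\chi_{\{w>k\}}z$ integrates cleanly to the desired inequality. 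The other family is handled symmetrically, and the general $j\in J_0$ follows by writing it as a superposition of these building blocks.
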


\medskip

To apply the above abstract theory to find a weak solution
to~\eqref{p-Lap-Dirichlet} we will consider the  functional
$\sDp: L^2_v(\Omega)\to [0,\infty]$ defined by
\begin{align*}
\sDp(u)=
\begin{cases}
\displaystyle \frac{1}{p}\int_{\Omega} |\sqrt{Q} \nabla u|^p \, dx,
&\text{ if } u \in   L^2_v(\Omega)\cap\HQp(\Omega),  \\
+\infty, & \text{ if }  u \in L^2_v(\Omega) \setminus \HQp(\Omega).
\end{cases}
\end{align*}
In addition, we define the operator $D_p$ on   $L^2_v(\Omega)$
as follows:  given $u \in L^2_v(\Omega) \cap \HQp(\Omega)$, $f\in D_p(u)$
if and only if $ f\in L^2_v(\Omega)$ and for any
$\phi\in C^{0,1}_c(\Omega)$,
\begin{equation} \label{eqn:subgradient0}
\int_\Omega  |\sqrt{Q} \nabla u|^{p-2} Q \nabla u \cdot \nabla \phi\,
dx
= \int_\Omega \phi f\,dv
\end{equation}
(That is, $u$ is a weak solution of the equation
$v^{-1}\div(|\sqrt{Q}\nabla u|^{p-2}Q\nabla u)= f$.)

\medskip

We can now give the result necessary to prove the existence of weak solutions.

\begin{prop}\label{Prop: maximal accretivee}
Fix $1<p<\infty$.  Then the  operator $\partial\sDp $ is an
$m$-completely accretive operator with
$D(\partial\sDp)\subset L^2_v(\Omega)\cap \HQp(\Omega)$, and
$\partial\sDp=D_p$.  Furthermore, if  $u\in  D(\partial\sDp)$ and
$f\in L^2_v(\Omega)$, 
then $f\in \partial \sDp(u) $ if and only if for all $\phi \in L^2_v(\Omega)\cap \HQp(\Omega)$,
\begin{equation}
\label{subgradient}
\int_{\Omega} f \phi\, dv =  \int_{\Omega}  |\sqrt{Q} \nabla u|^{p-2} Q \nabla u \cdot \nabla \phi \, dx. 
\end{equation}
\end{prop}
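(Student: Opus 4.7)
My plan is to verify the hypotheses of Proposition~\ref{Prop: m-accretive} to identify $\partial\sDp$ as a maximal monotone operator, upgrade to $m$-accretivity via Minty's theorem (Theorem~\ref{Minty theorem}), characterize the subgradient variationally, and finally establish complete accretivity using Proposition~\ref{Prop: complete accretive}.

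\emph{Convex, proper, lower semicontinuous.} Properness is clear since $C^{0,1}_c(\Omega) \subset D(\sDp)$. Convexity follows from the convexity of $\xi \mapsto |\sqrt{Q}(x)\xi|^p$ composed with the linear map $u \mapsto \nabla u$. For lower semicontinuity on $L^2_v(\Omega)$, if $u_n \to u$ in $L^2_v$ with $\liminf\sDp(u_n)=L<\infty$, I would extract a subsequence achieving this liminf; then $\sqrt{Q}\nabla u_{n_k}$ is bounded in $L^p(\Omega)^N$, and a further subsequence converges weakly to some $\vec{h}$. Combining $L^2_v$ convergence with the a.e.\ pointwise passage from Remark~\ref{rem:hyp-wt}(i) and the local $L^1$ bound on $\nabla u_n$ obtained from H\"older's inequality using $|(\sqrt{Q})^{-1}|_{\op}\in L^{p'}_{loc}$, one identifies $\vec{h} = \sqrt{Q}\nabla u$ as distributional limits, so $u\in\HQpc$ and weak $L^p$ lower semicontinuity yields $\sDp(u)\leq L$. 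Proposition~\ref{Prop: m-accretive} then gives that $\partial\sDp$ is maximal monotone with $D(\partial\sDp)\subset D(\sDp)\subset L^2_v(\Omega)\cap\HQp(\Omega)$, and Minty's theorem upgrades this to $m$-accretivity.

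\emph{Subgradient characterization.} For $u\in D(\partial\sDp)$ and $\phi\in L^2_v(\Omega)\cap\HQp(\Omega)$, the function $t\mapsto\sDp(u+t\phi)$ is convex, and differentiation under the integral sign (justified by dominated convergence) gives
$$\left.\frac{d}{dt}\sDp(u+t\phi)\right|_{t=0} = \int_\Omega |\sqrt{Q}\nabla u|^{p-2}Q\nabla u\cdot\nabla\phi\,dx,$$
with integrability controlled by H\"older's inequality, $\|\sqrt{Q}\nabla u\|_p^{p-1}\|\sqrt{Q}\nabla\phi\|_p$. The subgradient inequality evaluated at $u\pm t\phi$ forces equality in \eqref{subgradient}; conversely, \eqref{subgradient} combined with convexity of $\sDp$ along segments recovers $f\in\partial\sDp(u)$. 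This yields the ``Furthermore'' statement and, in particular, $\partial\sDp\supset D_p$. The reverse inclusion follows since $C^{0,1}_c(\Omega)$ is dense in $L^2_v(\Omega)\cap\HQp(\Omega)$ by Proposition~\ref{Prop: density of L2}, and both sides of \eqref{eqn:subgradient0} are continuous in $\phi$ in the joint norm.

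\emph{Complete accretivity.} This is the main obstacle. By Proposition~\ref{Prop: complete accretive}, I must show that for $(u_i,h_i)\in\gr(\partial\sDp)$, $i=1,2$, and $f\in P_0$,
$$\int_\Omega f(u_1-u_2)(h_1-h_2)\,dv \geq 0.$$
The crux is to verify that $\phi := f(u_1-u_2)\in L^2_v(\Omega)\cap\HQp(\Omega)$ with $\nabla\phi = f'(u_1-u_2)\,\nabla(u_1-u_2)$, making $\phi$ a legitimate test function in the Furthermore identity. Since $0\notin\operatorname{supp}(f)$, we have $f(0)=0$; boundedness of $f$ together with $v(\Omega)<\infty$ gives $\phi\in L^\infty_v\subset L^2_v$. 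For the $\HQp$ membership and chain rule, approximate each $u_i$ by $\psi_{n,i}\in C^{0,1}_c(\Omega)$ converging in $\HQp$; then $f(\psi_{n,1}-\psi_{n,2})\in C^{0,1}_c(\Omega)$ with explicit gradient, and a dominated convergence argument patterned on the proofs of Propositions~\ref{Prop: density of ess bdd} and~\ref{Prop: density of L2} (using boundedness of $f,f'$ and the transfer from $v$-a.e.\ to a.e.\ via Remark~\ref{rem:hyp-wt}(i)) yields $\HQp$-convergence to $\phi$ with the stated gradient. Testing the Furthermore identity for $u_1$ and $u_2$ against $\phi$ and subtracting gives
$$\int_\Omega f(u_1-u_2)(h_1-h_2)\,dv = \int_\Omega f'(u_1-u_2)\,\bigl[|\sqrt{Q}\nabla u_1|^{p-2}Q\nabla u_1 - |\sqrt{Q}\nabla u_2|^{p-2}Q\nabla u_2\bigr]\cdot(\nabla u_1-\nabla u_2)\,dx.$$
Setting $a_i = \sqrt{Q}\nabla u_i$, the bracketed pairing rewrites as $(|a_1|^{p-2}a_1-|a_2|^{p-2}a_2)\cdot(a_1-a_2)\geq 0$ by the classical monotonicity of the $p$-Laplacian nonlinearity on $\R^N$, and $f'\geq 0$; hence the integral is nonnegative, which is exactly the condition required by Proposition~\ref{Prop: complete accretive}.
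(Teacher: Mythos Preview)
Your proposal is correct, and the overall architecture (verify $\sDp$ is proper/convex/l.s.c., invoke Proposition~\ref{Prop: m-accretive} and Minty, then check complete accretivity via Proposition~\ref{Prop: complete accretive}) matches the paper. The genuine divergence is in how you establish $\partial\sDp = D_p$.

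The paper proceeds indirectly: it first proves that $D_p$ itself is $m$-accretive by solving the resolvent equation $u + D_p(u) \ni f$ via the direct method of the calculus of variations applied to the energy $E_p(u) = \sDp(u) + \int_\Omega (\tfrac{u}{2} - f)u\,dv$; then it shows $D_p \subset \partial\sDp$ using Young's inequality, and concludes equality by maximality of $D_p$. You instead exploit that $\sDp$ is G\^ateaux differentiable along directions $\phi \in L^2_v(\Omega)\cap\HQp(\Omega)$ and read off the subgradient characterization directly from the two-sided inequality $\sDp(u\pm t\phi) \geq \sDp(u) \pm t\langle f,\phi\rangle_{L^2_v}$. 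This is cleaner and avoids the auxiliary variational problem entirely. One practical cost: the paper later (Section~\ref{Section: Global weak solutions}) invokes that variational argument to solve the implicit Euler step~\eqref{p-Lap-Dirichlet-discretized-weak}, so with your route the solvability of the discretized problem must instead be deduced abstractly from the $m$-accretivity of $\partial\sDp$, which is fine but should be noted.

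Two small points. First, in your l.s.c.\ sketch you conclude only $u\in\HQpc(\Omega)$, whereas $\sDp(u)<\infty$ requires $u\in\HQp(\Omega)$; the paper is also terse here, but the missing step is that $\HQp$ is a closed (hence weakly closed) subspace and the $u_n$ live there. Second, your careful justification that $f(u_1-u_2)\in L^2_v\cap\HQp$ with the expected gradient is more explicit than the paper's treatment, which simply writes down the identity; your approximation argument is the right way to fill that in.
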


\begin{proof}
We will first show that $\partial \sDp$ is an $m$-accretive operator.  Since a
subdifferential is accretive (see~\cite[p.~157]{Showalter}), by
Minty's Theorem (Theorem~\ref{Minty theorem}) it will suffice to show that
$\partial \sDp$ is maximal monotone.  To do this, we will apply Proposition~\ref{Prop: m-accretive}.
By definition,  the effective domain of $\sDp$ is $D(\sDp)=
L^2_v(\Omega)\cap \HQp(\Omega)$, so $\sDp$ is  proper.  Since
$1<p<\infty$, $\sDp$ is convex.  Therefore, it remains to show that 
$\sDp$ is weakly lower semicontinuous (l.s.c) in $L^2_v(\Omega)$.

Assume that $u_n\to u$ in $ L^2_v(\Omega)$.
By passing to a subsequence, we may assume without loss of generality that
$\lim\limits_{n\to \infty} \sDp(u_n)$ exists and is finite.  In particular, each $u_n$ belongs to $L^2_v(\Omega)\cap \HQp(\Omega)$.  Since $u_n\to u$ in $L^2_v(\Omega)$, by passing to subsequence we may assume that $u_n\to u$ a.e on $\Omega$.  Ergoroff's Theorem now implies that for any $\varepsilon>0$, there exists $\Omega_\varepsilon\subset \Omega$ such that $m(\Omega\setminus \Omega_\varepsilon)<\epsilon$  and $u_n$ converges uniformly to $u$ on $\Omega_\varepsilon$. 
Since
$$\int_{\Omega_\varepsilon} |\sqrt{Q} \nabla u_n|^p \, dx\leq  \int_{\Omega} |\sqrt{Q} \nabla u_n|^p \, dx\leq C$$
we may use weak compactness (passing again to a subsequence) to conclude $\nabla u_n \rightharpoonup \vec{f}$ in
$\cL^p_Q(\Omega_\varepsilon)$ for some $\vec{f} \in \cL^p_Q(\Omega_\varepsilon)$. This implies that for any
$\phi \in C^1_c(\Omega_\varepsilon ;\R^N)$  
$$
\lim\limits_{n\to \infty} \int_{\Omega_\varepsilon} \nabla u_n \cdot \phi\, dx = \int_{\Omega_\varepsilon} \vec{f} \cdot \phi\, dx.
$$ 
Here we have used the fact that  $C^{0,1}_c(\Omega_\varepsilon;\R^N)\subset \cL^{p'}_{Q^{-1}}(\Omega_\varepsilon)\doteq (\cL^p_Q(\Omega_\varepsilon))^*$.
We claim
that $\nabla u=\vec{f}$ a.e. in $\Omega_\varepsilon$.
Indeed,
\begin{equation} \label{eqn:weak-estimate}
  \lim\limits_{n\to \infty} \int_{\Omega_\varepsilon} \nabla u_n \cdot \phi\, dx
  =  -\lim\limits_{n\to \infty} \int_{\Omega_\varepsilon}   u_n  \div \phi\, dx \\
= -  \int_{\Omega_\varepsilon}   u   \div \phi\, dx 
= \int_{\Omega_\varepsilon} \nabla u  \cdot \phi\, dx.
\end{equation}
The first equality follows from the inclusion $\HQpc(\Omega_\varepsilon)\subset W^{1,1}_{loc}(\Omega_\varepsilon)$.
The second equality is a direct consequence of the uniform convergence of $u_n\to u$ on $\Omega_\varepsilon$.
Therefore, $\nabla u_n$ converges to $\nabla u$ in the distributional
sense.  By Hypothesis~\ref{hyp:wt-matrix-hypotheses} and
Remark~\ref{rem:hyp-wt}, for any compact set $K\subset \Omega_\varepsilon$,
\[ \int_K |\nabla u_n|\,dx \leq   \bigg(\int_K w^{1-p'} \, dx \bigg)^{1/p'}
  \bigg(\int_K | \sqrt{Q} \nabla u_n|^p\,dv \bigg)^{1/p } <
  \infty. \]
Convergence in distributional sense,  
when restricted to locally integrable functions, coincides with the
usual convergence in $L^1_{loc}(\Omega_\varepsilon)$.   Therefore, $\nabla u\in
L^1_{loc}(\Omega_\varepsilon)$ and a standard approximation argument shows   that
$\vec{f}=\nabla u$ a.e. in $\Omega_\varepsilon$. The weak convergence implies
\begin{equation}
\label{liminf-epsilon}
\int_{\Omega_\varepsilon} |\sqrt{Q} \nabla u|^p \, dx\leq \liminf_{n\to \infty} \int_{\Omega_\varepsilon} |\sqrt{Q} \nabla u_n|^p \, dx \leq p\liminf_{n\to\infty}   \sDp(u_n).
\end{equation}
Finally we may take a nested sequence of domains $\Omega_{\varepsilon_1}\subset \Omega_{\varepsilon_2}\subset \cdots$ with $\Omega=\bigcup_k \Omega_{\varepsilon_k}$ to obtain
\begin{equation}
\label{liminf}
\sDp(u)=\lim_k\frac1p\int_{\Omega_{\varepsilon_k}} |\sqrt{Q} \nabla u|^p \, dx \leq \liminf\limits_{n\to \infty}\sDp(u_n).
\end{equation}
Therefore, we have shown that $\sDp$ is lower
semicontinuous, and so can conclude that $\partial\sDp$ is $m$-accretive.
Moreover, again by Proposition~\ref{Prop: m-accretive}, we have that
$\overline{D(\partial\sDp)} = \overline{D(\sDp)} =
L^2_v(\Omega) $. 

\medskip

We will now prove that $D_p=\partial\sDp$.  As the first step we  prove that $D_p$ is a completely accretive
  operator and so accretive.  From \cite{Simon} we have that for all $\xi,\zeta\in
  \R^N$, if $p\geq 2$ there exists $C>0$ such that
\begin{equation}
\label{Numerical inequality}
|\xi-\zeta|^p \leq   C(|\xi|^{p-2}\xi - |\zeta|^{p-2}\zeta)\cdot (\xi-\zeta) 
\end{equation}
and 
\begin{equation}
\label{Numerical inequality 2}
\big||\xi|^{p-2}\xi - |\zeta|^{p-2}\zeta \big| \leq \tilde{C}|\xi-\zeta|(|\xi|+|\zeta|)^{p-2};
\end{equation}
similarly, if $1<p<2$, there exists $\tilde{C}>0$ such that
\begin{equation}
\label{Numerical inequality 3}
\frac{|\xi-\zeta|^2}{(|\xi|+|\zeta|)^{2-p}} \leq   C(|\xi|^{p-2}\xi - |\zeta|^{p-2}\zeta)\cdot (\xi-\zeta) 
\end{equation}
and 
\begin{equation}
\label{Numerical inequality 4}
\big||\xi|^{p-2}\xi - |\zeta|^{p-2}\zeta \big| \leq \tilde{C}|\xi-\zeta|^{p-1}.
\end{equation}

Given any $f\in P_0$ (see~\eqref{P0})  and $v_i\in  D_p(u_i)$,
$i=1,\,2$,  it follows from \eqref{Numerical inequality} or
\eqref{Numerical inequality 3}   that
\begin{multline*}
\int_\Omega (v_1-v_2) f(u_1-u_2)\,dv \\
= \int_{\Omega} f^\prime(u_1-u_2) Q (|\sqrt{Q} \nabla u_1|^{p-2}  \nabla u_1 - |\sqrt{Q} \nabla u_2|^{p-2}   \nabla u_2 ) \cdot \nabla (u_1-u_2)\, dx \geq 0.
\end{multline*}
By Proposition~\ref{Prop: complete accretive}, we therefore have that $D_p$ is completely accretive.
{
If we  choose $j(x)=x^2$ in \eqref{complete accretive cond}, then Definition~\ref{Def: Complete Accretivity} implies that  $D_p$ is accretive.
}

To prove that $D_p$ is $m$-accretive, we will apply
Definition~\ref{Def: m-Accretivity}.
By~\cite[Lemma~IV.1.3]{Showalter} it will suffice to prove that
${\rm Rng}(I+ D_p)=L^2_v(\Omega)$. Given any $f\in L^2_v(\Omega)$, 
define the energy functional
\begin{equation} \label{eqn:energy-functional}
E_p(u)=\sDp(u) + \int_\Omega \left(\frac{u}{2} - f \right) u \, dv.
\end{equation}
We have shown that $\sDp(u)$ is l.s.c in $L^2_v(\Omega)$.  To see that
the functional $E_p$ is coercive, note that
\[ 
\int_\Omega (-fu)\, dv \geq - \|u\|_{L^2_v}  \| f \|_{L^2_v}  \geq - \varepsilon \|u\|_{L^2_v}^2 - C(\varepsilon) \|f\|_{L^2_v}^2.
\]
If we rearrange terms and choose $\varepsilon$ small, we get that the
second term in $E_p$ is uniformly bounded below; coercivity follows at once.
The
convexity of $E_p$ implies that it is also weakly l.s.c. in
$L^2_v(\Omega)$. Therefore,  by the direct method in the calculus
of variations (see~\cite[Theorem~4.6]{MR1962933}), $E_p$ has a minimizer
$u\in L^2_v(\Omega)\cap \HQp(\Omega)$ and its Euler-Lagrange equation shows
that $f\in I+D_p(u)$.

Since $D_p$ is $m$-completely
accretive, by {Theorem~\ref{Minty theorem} } it is maximal
monotone.  But, given any
$u,\,\phi\in L^2_v(\Omega)\cap \HQp(\Omega)$ and $f\in D_p(u)$, it follows from  Young's
inequality that
\begin{multline*}
  \int_\Omega   f (\phi-u)\,dv
  = \int_\Omega  |\sqrt{Q} \nabla u|^{p-2} Q \nabla u \cdot \nabla (\phi-u)\, dx  
 = \int_\Omega  |\sqrt{Q} \nabla u|^{p-2} Q \nabla u \cdot \nabla  \phi \, dx  -  p \sDp(u) \\
 \leq \frac{1}{p'}\int_{\Omega} |\sqrt{Q} \nabla u|^p \, dx +
 \frac{1}{p}\int_{\Omega} |\sqrt{Q} \nabla \phi|^p \, dx    -  p
 \sDp(u)
  =\sDp(\phi) - \sDp(u).
\end{multline*}
Hence, $f\in \partial\sDp(u)$, and so  $D_p \subset \partial \sDp$.  Thus,
by maximality, $D_p=\partial \sDp$.

Finally, we note that~\eqref{subgradient}
follows from~\eqref{eqn:subgradient0} in the definition of $D_p$ and a
density argument, since by Proposition~\ref{Prop: density of L2},
$C^{0,1}_c(\Omega)$ is dense in  $\HQp(\Omega)\cap L^2_v(\Omega)$.
\end{proof}


\section{Existence and uniqueness of solutions}\label{Section: Global weak solutions}
\label{section:proof-global-strong}

In this section, we prove Theorem~\ref{Thm: strong solution}.  We note
that the existence of a weak solution to \eqref{p-Lap-Dirichlet} can
be obtained by using an argument similar to that for the
parabolic $p$-Laplacian: see~\cite{Showalter}.
However, this definition of weak solution is somewhat different from
ours.  Moreover, this approach requires the Sobolev inequality
\eqref{Sobolev ineq}.    We will give a different  proof that does not require
a  Sobolev  inequality.  Moreover, as a consequence of our proof we
will derive many  energy estimates that will be useful for the asymptotic
analysis of \eqref{p-Lap-Dirichlet} in subsequent sections.

Our proof is organized as follows.  First, we will show that for any
$u_0\in L^2_v(\Omega)$,  \eqref{p-Lap-Dirichlet} has a {mild
solution}. The existence of such a solution is derived
using nonlinear semigroup theory. Second, we will show that mild
solutions satisfy~\eqref{L infty contraction}.   Third, we will show
that if  $u_0\in  L^2_v(\Omega)\cap \HQp(\Omega)$, then the mild
solution is a strong solution. Fourth, we will use  an approximation
argument to prove the existence of a weak solution when $u_0\in
L^2_v(\Omega)$.   Fifth, we will show that weak solutions are
unique.  Finally, we will show that the {comparison principle}~\eqref{comparison principle} holds for weak solutions.

\subsection{Existence of mild solutions}

{ We will use nonlinear semigroup theory to establish the
existence of a mild solution to \eqref{p-Lap-Dirichlet} by studying a
time-discretized problem.  By Definition~\ref{defn:mild-soln},
mild solutions are defined as the limit of any sequence of $\varepsilon$-approximate
solutions by implicit time discretization.  More
  precisely, since a mild solution does not depend on specific choice
  of discretization of a time interval $[0,T]$, we may divide $[0,T]$
  into $n$ equal subintervals.} Let $t_k=kT/n$ for $k=0,1,\cdots,n$.
Then the discretized problem to \eqref{p-Lap-Dirichlet} is
\begin{equation}
\label{p-Lap-Dirichlet-discretized}
\left\{\begin{aligned}
 \frac{n }{T } (u_{n,k} - u_{n,k-1}) + \partial \sDp ( u_{n,k}) & \ni 0  ;\\ 
u_{n,0} &=u_0  .
\end{aligned}\right.
\end{equation}
For any $\phi \in L^2_v(\Omega)\cap \HQp(\Omega)$, the first line of \eqref{p-Lap-Dirichlet-discretized} becomes
\begin{equation}\label{p-Lap-Dirichlet-discretized-weak}
\frac{T }{n  }\int_\Omega |\sqrt{Q} \nabla u_{n,k }|^{p-2} Q \nabla  u_{n,k } \cdot \nabla \phi \, dx  + \int_\Omega (u_{n,k} - u_{n,k-1})\phi\, dv=0.
\end{equation}
This degenerate elliptic equation has a weak solution; this has been
proved using the direct method of calculus variations as part of the
proof of Proposition~\ref{Prop: maximal accretivee}:
see~\eqref{eqn:energy-functional} and the subsequent discussion. 

Given this solution, we define the piecewise constant solution as
\begin{equation}
\label{piecewise constant fn}
u_n(t)=
\begin{cases}
u_0 ,\quad &   t=t_0 ; \\
u_{n,k}, &  t\in (t_{k-1},t_k],
\end{cases}
\end{equation}
that is, $u_n$ is an $\varepsilon$-approximate solution of \eqref{p-Lap-Dirichlet} for $\varepsilon>T/n$.
Proposition~\ref{Prop: maximal accretivee} and   Theorem~\ref{Benilan theorem} shows that \eqref{p-Lap-Dirichlet} has a unique mild solution $u \in C([0,T];L^2_v(\Omega)) $, which is 
the uniform limit of $u_n$, i.e. given any $\varepsilon>0$, 
for all sufficiently large $n$,
\begin{equation}
\label{mild sol def and converg}
\|u (t)-u_n(t)\|_{L^2_v} <\varepsilon, \quad t\in [0,T]
\end{equation}
Further, if we pass to a subsequence, we have that for all $t\in [0,T]$,  $u_n(t)$ converges to $u(t)$ pointwise $v$-a.e. in $\Omega$.

\subsection{Contractivity of mild solutions}
We will now show that $u$ satisfies \eqref{L infty contraction}. 
When $q=\infty$, pick  $u_0\in   L^\infty(\Omega)$.
Put $K=\|u_0\|_{L^\infty_v} $. Define the convex and l.s.c. function $j:\R\to [0,\infty)$ by
$$
j(x)=\max\{x,K\}-K.
$$
Then the complete accretivity of $\partial \sDp$ implies that
\begin{equation} \label{eqn:con-accret}
\int_\Omega j(u_{n,1} )\, dv \leq \int_\Omega j(u_0 )\, dv =0.
\end{equation}
To see this, first note that $0\in \partial \sDp(0)$ and
{$ u_0-u_{n,1} \in \frac{T}{n} \partial \sDp(u_{n,1})$}.  Using the fact that
{$\frac{T}{n}\partial \sDp$} is completive accretive and choosing
$(u_1,v_1)=(u_{n,1}, u_0-u_{n,1})$ and $(u_2,v_2)=(0,0)$ in
Definition~\ref{Def: Complete Accretivity}, we can conclude
that~\eqref{eqn:con-accret} holds.

Hence, $ u_{n,1}\leq K$ $v$-a.e.
Similarly, we can show that $ u_{n,1}\geq -K$ $v$-a.e.
Applying an analogous argument to every step of \eqref{p-Lap-Dirichlet-discretized} reveals that, for all $n$,
\begin{equation*}
\|u_n(t )\|_{L^\infty_v}   \leq \|u_0\|_{L^\infty_v},\quad t>0.
\end{equation*}
From  \eqref{mild sol def and converg}, we immediately infer that  $u$ is $L^\infty_v$-contraction.

When $q\in [1,\infty)$, we take 
$ 
j(x)=|x|^q.
$ 
A similar argument as above yields that  
\begin{equation}
\label{L q contraction discretized}
\|u_n(t )\|_{L^q_v}   \leq \|u_0\|_{L^q_v},\quad t>0.
\end{equation}
From the uniformly boundedness of $\|u_n(t )\|_{L^q_v}$, we infer that
$$
\|u (t )\|_{L^q_v} \leq \liminf\limits_{n\to \infty} \|u_n(t )\|_{L^q_v}\leq \|u_0\|_{L^q_v}.
$$


Choosing $\phi=u_{n,k}$ in \eqref{p-Lap-Dirichlet-discretized-weak} yields
\begin{equation} \label{Weak est 1}
  \notag  \frac{T}{n} \int_\Omega |\sqrt{Q} \nabla u_{n,k }|^p\, dx
  = \int_\Omega  u_{n,k-1} u_{n,k }\, dv  -\int_\Omega  u_{n,k }^2  \, dv 
\leq   \frac{1}{2} \left(\int_\Omega | u_{n,k-1 }|^2 \, dv  - \int_\Omega | u_{n,k }|^2 \, dv \right).
\end{equation}
Summing over $k=1,2,\ldots,n$ yields
$$
 \int_0^T \int_\Omega |\sqrt{Q} \nabla u_n|^p\, dx \, dt \leq      \frac{1}{2}  \int_\Omega| u_0|^2 \, dv.
$$
Hence,
\begin{equation}
\label{Weak est 2}
2\|\nabla u_n\|_{L^p((0,T);\cL^p_Q(\Omega))}\leq  \|u_0\|_{L^2_v}, \quad \|u_n(t)\|_{L^2_v} \leq  \|u_0\|_{L^2_v}.
\end{equation}

\medskip

\subsection{Existence of strong solutions}
\label{subsec:strong}
Now assume that    $u_0\in \HQp(\Omega)\cap L^2_v(\Omega)$.  We will
show that the mild solution found above is actually a strong solution.
When $1<p<2$, \eqref{mild sol def and converg} implies that
\begin{equation}
\label{mild sol Lp converg}
u_n  \to u  \quad \text{in }L^\infty((0,T); L^p_v(\Omega)) .
\end{equation}
When $p\in [2,\infty)$, \eqref{L q contraction discretized} implies
that the norms $\|u_n\|_{L^p((0,T);L^p_v(\Omega))}$ are uniformly bounded.
Hence, passing to a subsequence if necessary,  we have
\begin{equation}
\label{mild sol Lp converg 2}
u_n  \rightharpoonup u  \quad \text{in } L^p((0,T);L^p_v(\Omega)).
\end{equation}
{Applying an argument similar to that in  the proof of Proposition~\ref{Prop:
  maximal accretivee}, using \eqref{mild sol Lp converg} or
\eqref{mild sol Lp converg 2}, and again passing to a subsequence if
necessary, we have that }
\begin{equation}
\label{Weak convergence 0}
\nabla u_n \rightharpoonup \nabla u \quad \text{ in}\quad L^p((0,T);\cL^p_Q(\Omega)).
\end{equation}
Define 
$$
\tilde{u}_n(t)=
\begin{cases}
u_0, \quad & t=t_0, \\
\frac{n(t-t_{k-1})}{T}(u_{n,k}-u_{n,k-1})+u_{n,k-1},  & t\in (t_{k-1},t_k]. 
\end{cases}
$$
Then by \eqref{p-Lap-Dirichlet-discretized-weak}, we obtain
\begin{equation}\label{p-Lap-Dirichlet-discretized-weak-2}
\int_0^T \int_\Omega \partial_t \tilde{u}_n \phi\, dv \, dt + \int_0^T\int_\Omega |\sqrt{Q} \nabla u_n|^{p-2} Q \nabla  u_n \cdot \nabla \phi \, dx\,dt  = 0 
\end{equation}
for all valid test functions $\phi $ in \eqref{Strong formulation}.
{If we choose $\phi=u_{n,k}-u_{n,k-1}$ in
\eqref{p-Lap-Dirichlet-discretized-weak}, then   for all $t\in
(t_{k-1},t_k]$ we get
\begin{equation} \label{decreasing norm}
  \frac{T}{n}\int_\Omega |\partial_t \tilde{u}_n(t)|^2 \, dv
  =  \int_\Omega |\sqrt{Q}\nabla u_{n,k}|^{p-2} Q\nabla u_{n,k} \cdot \nabla(u_{n,k-1}- u_{n,k}) \,dx 
\leq  \sDp(u_{n,k-1})-\sDp(u_{n,k});
\end{equation}}
this inequality follows from  the definition of subdifferential and Proposition~\ref{Prop: maximal accretivee}.
Summing over $k=1,\ldots,n$ yields
\begin{equation}\label{Weak est 3}
\int_0^T \int_\Omega |\partial_t \tilde{u}_n|^2 \, dv\, dt  \leq     \sDp(u_0) - \sDp(u_n(T))  \leq \sDp(u_0).
\end{equation}
Hence, $\partial_t \tilde{u}_n$ is uniformly bounded in $L^2([0,T);L^2_v(\Omega)) $   with respect to $n$. 
By the construction of $\tilde{u}_n$, we readily see that
\begin{equation}\label{Converge tun}
\tilde{u}_n \to u \quad \text{ in} \quad L^\infty((0,T); L^2_v(\Omega)),
\end{equation}
and \eqref{Weak est 3} and \eqref{Converge tun} together imply that
\begin{equation}\label{Weak convergence 3}
\partial_t \tilde{u}_n \rightharpoonup  \partial_t u \quad \text{ in}\quad L^2((0,T); L^2_v(\Omega)).
\end{equation}
Now we choose $\phi=u_n-u$ in \eqref{p-Lap-Dirichlet-discretized-weak-2}. This yields
\begin{align*}
&\int_0^T \int_\Omega \partial_t \tilde{u}_n (u-u_n)\, dv \, dt\\
 & \qquad = \int_0^T\int_\Omega |\sqrt{Q} \nabla u_n|^{p-2} Q \nabla  u_n \cdot \nabla   (u_n-u) \, dx\,dt   \\
& \qquad = \int_0^T\int_\Omega \left(|\sqrt{Q} \nabla u_n|^{p-2} Q \nabla  u_n  - |\sqrt{Q} \nabla u|^{p-2} Q \nabla  u  \right)\cdot  \nabla (u_n-u) \, dx\,dt  + o_n(1), 
\end{align*}
where $o_n(1)\to 0$ as $n\to \infty$. 
The last step follows from~\eqref{Weak convergence 0}.
Together with \eqref{mild sol def and converg}, this implies
\begin{equation}\label{Weak estimate 4}
\lim\limits_{n\to \infty} \int_0^T\int_\Omega \left(|\sqrt{Q} \nabla u_n|^{p-2} Q \nabla  u_n  - |\sqrt{Q} \nabla u|^{p-2} Q \nabla  u \right)\cdot  \nabla (u_n-u) \, dx\,dt =0,
\end{equation}
and \eqref{Numerical inequality} and \eqref{Weak estimate 4}  imply that  when $p\geq 2$,
\begin{equation}\label{Strong convergence 1}
\nabla u_n\to \nabla u \quad \text{in } L^p((0,T);\cL^p_Q(\Omega)).
\end{equation}

When $1<p<2$, we use \eqref{Numerical inequality 3} and {H\"older's
inequality with exponent less than one.  This is a consequence of the
classical  H\"older's inequality:   fix $0<r<1$ and let
$\frac{r}{r-1}$ be the ``dual exponent''.  Then we have that
$$\int_0^T\int_\Omega |f(x,t)g(x,t)|\,dxdt\geq \|f\|_{L^r(\Omega\times
  (0,T))}\|g^{-1}\|_{L^{\frac{r}{1-r}}(\Omega\times (0,T))}^{-1}.$$}
This implies
\begin{align}\label{Numerical inequality 3-conclusion}
\notag & \int_0^T \int_\Omega \left(|\sqrt{Q} \nabla u_n|^{p-2} Q \nabla  u_n  - |\sqrt{Q} \nabla u|^{p-2} Q \nabla  u \right)\cdot  \nabla (u_n-u) \, dx\, dt\\
\notag &\qquad \geq  \int_0^T \int_\Omega  | \sqrt{Q} \nabla (u_n-u)|^2 (    |\sqrt{Q}\nabla u_n|+|\sqrt{Q}\nabla u|)^{p-2}\, dx\, dt \\
\notag & \qquad \geq  \|   \nabla (u_n-u)\|_{L^p((0,T);\cL^p_Q(\Omega))}^2 \| | \sqrt{Q} \nabla u_n|+| \sqrt{Q}\nabla u| \|_{L^p((0,T);L^p(\Omega))}^{p-2}\\
& \qquad \geq  C \|   \nabla (u_n-u)\|_{L^p((0,T);\cL^p_Q(\Omega))}^2  \left(\|   \nabla u_n\|_{L^p((0,T);\cL^p_Q(\Omega))}  + \|\nabla u \|_{L^p((0,T);\cL^p_Q(\Omega))} \right)^{p-2}   
\end{align}
for some $C$ independent of $n$.
Therefore, \eqref{Strong convergence 1} also holds when $1<p<2$.

By \eqref{Numerical inequality 2} or \eqref{Numerical inequality 4}
we have that
\begin{equation}
\label{Strong convergence 2}
|\sqrt{Q} \nabla u_{n }|^{p-2}\sqrt{Q}\nabla u_n \to |\sqrt{Q} \nabla u |^{p-2}\sqrt{Q}\nabla u \quad \text{in } L^p((0,T);L^{p^\prime}(\Omega)).
\end{equation}
On the other hand, inequality~\eqref{decreasing norm} implies that 
$$
\sDp(u_{n,k}) \leq \sDp(u_{n,k-1})\leq \sDp(u_0).
$$
If we use a similar argument to the proof of \eqref{Weak convergence
  0} and \eqref{Strong convergence 1}, we have that
\begin{equation}
\label{weak converg E measurable}
\nabla u_n \rightharpoonup \nabla u \quad \text{ in}\quad L^p(E;\cL^p_Q(\Omega )) 
\end{equation}
for any measurable $E\subseteq (0,T)$. A contradiction argument immediately implies that
\begin{equation}
\label{L infty nabla u}
\sDp(u(t)) \leq \sDp(u_0) ,\quad \text{for a.e. } t\in (0,T) \Longrightarrow \nabla u \in L^\infty((0,T); \cL^p_Q(\Omega )).
\end{equation}
{Indeed, if the set $E=\{t\in (0,T): \sDp(u(t)) > \sDp(u_0)\}$ had positive measure, then we would have 
$$
\| \nabla u \|_{L^p(E;\cL^p_Q(\Omega ))}   > \| \nabla u_0 \|_{L^p(E;\cL^p_Q(\Omega ))} .
$$ 
However, \eqref{weak converg E measurable} implies that
$$
\| \nabla u \|_{L^p(E;\cL^p_Q(\Omega ))} \leq \| \nabla u_n \|_{L^p(E;\cL^p_Q(\Omega ))}   \leq \| \nabla u_0 \|_{L^p(E;\cL^p_Q(\Omega ))} ,\quad \forall n \in \N,
$$
which is a contradiction.
}
For  any valid test function $\phi $ in \eqref{Strong formulation}, if
we let $n\to \infty$ in  \eqref{p-Lap-Dirichlet-discretized-weak-2},
\eqref{Weak convergence 3} and \eqref{Strong convergence 2}  we get
that
\begin{align*}
  \int_0^T\int_\Omega \partial_t u(t) \phi \, dv  dt + \int_0^T \int_\Omega |\sqrt{Q} \nabla u|^{p-2} Q \nabla  u  \cdot \nabla \phi  \,dx \, dt  
=0 .
\end{align*}
Therefore,  $u$ is a strong solution to \eqref{p-Lap-Dirichlet}.

\subsection{Existence of weak solutions}
Now fix $ u_0 \in L^2_v(\Omega)$.  We will use an approximation
argument to show that the mild solution found above is a weak
solution.  Recall that $C^{0,1}_c(\Omega) $ is dense in
$L^2_v(\Omega)$.  Fix a sequence
$u_{0,j} \in \HQp(\Omega) \cap L^2_v(\Omega)$ such that
$ u_{0,j} \to u_0$ in $L^2_v(\Omega)$; denote the corresponding strong
solution to \eqref{p-Lap-Dirichlet} with initial data $u_{0,j}$ by
$u_j$ and the corresponding piecewise constant solutions to
\eqref{p-Lap-Dirichlet-discretized} by $u_{n;j}$ with steps
$u_{n,k;j}$. 
{
We then get from equation~\eqref{p-Lap-Dirichlet-discretized} that 
$$
u_{n,k;j} + \frac{T}{n} \partial\sDp(u_{n,k;j}) \ni u_{n,k-1;j}.
$$
If we choose $\lambda=\frac{T}{n}$ in the contraction property~\eqref{A: contraction},
we get that
}
$$
\|u_{n ;j_1}(t)-u_{n ;j_2}(t)\|_{L^2_v} \leq \|u_{0,j_1}-u_{0,j_2}\|_{L^2_v};
$$
thus, $\{u_j\}_{j=1}^\infty$ is Cauchy in $C([0,T];L^2_v(\Omega))$. 
Therefore, for any $T>0$ there exists $u\in C([0,T];L^2_v(\Omega))$
such that
\begin{equation}
\label{Strong converg u 1}
u_j\to u \quad \text{in }  C([0,T];L^2_v(\Omega)).
\end{equation}
It is clear that $u$ is the unique mild solution of \eqref{p-Lap-Dirichlet} with initial datum $u_0$.
By \cite[Theorem~4.11]{Barbu2010}, for a.e. $t\in  (0,T]$, $u(t)\in \HQp(\Omega)\cap L^2_v(\Omega)$.
By the argument in Section~\ref{subsec:strong}, $u$ is a strong
solution on $[\tau,T]$ for all $\tau\in (0,T)$.  Hence, for any
$\phi \in L^p((\tau,T); \HQp(\Omega))\cap L^2 ((\tau,T);
L^2_v(\Omega)),$
\begin{equation}
\label{Weak formulation 0}
\int_\tau^T\int_\Omega \partial_t u(t) \phi (t)\, dv\,  dt   + \int_\tau^T \int_\Omega |\sqrt{Q} \nabla u|^{p-2} Q \nabla  u  \cdot \nabla \phi  \,dx \, dt  
=   0.
\end{equation}
In particular, \eqref{Weak formulation 0} holds for $\phi=u$.

\medskip

Next, we will prove that {$\nabla u\in L^p((0,T); \cL^p_Q(\Omega))$}. 
By~\cite[Theorem 1]{BenCra81} and   the accretivity of $\partial
\sDp$, we have that for sufficiently small $h$,
\begin{equation}
\label{time der est for u}
\frac{1}{h}\|u_j(t+h)-u_j(t)\|_{L^2_v} \leq \frac{2}{|p-1|t} \|u_{0,j}\|_{L^2_v} + o_h(1),
\end{equation}
where $o_h(1)\to 0$ as $h\to 0$.  For every $t\in (\tau,T)$, choose
$h>0$ so small that $t+h<T$.  Form a time discretization of $[0,T)$
with $n$ subintervals $[t_{k-1},t_k)$; without loss of generality, we
may always assume that $t$ and $t+h$ are points of the partition
$\mathcal{P}=\{0=t_0<t_1<\cdots <t_n=T\}$: given a partition, {one} can
always change the end points of the closest subintervals, as the
solution $u_j$ does not depend on the choice of time discretization.
{By~\eqref{p-Lap-Dirichlet-discretized-weak} we have that}
\begin{align*}
 (t_k-t_{k-1}) \int_\Omega |\sqrt{Q}\nabla u_{n,k;j}|^p\, dx 
\leq   \frac{1}{2} \int_\Omega  ( |u_{n,k-1;j}|^2 -|u_{n,k;j}|^2  ) \, dv.
\end{align*}
If we now  sum over all the subintervals $[t_{k-1}, t_k)$ contained in
$[t,t+h)$, then by the contraction property~\eqref{A: contraction},
{
\begin{align*}
  \int_t^{t+h} \int_\Omega |\sqrt{Q}\nabla u_{n;j}|^p\, dx\, ds
  &\leq  \frac{1}{2} \int_\Omega  ( |u_{n;j}(t)|^2 -|u_{n;j}(t+h)|^2  ) \, dv\\
& \leq  \frac{1}{2} \int_\Omega    |u_{n;j}(t)-u_{n;j}(t+h)| |u_{n;j}(t)+ u_{n;j}(t+h)|  \, dv \\
&\leq \frac{1}{2}\|u_{n;j}(t) - u_{n;j}(t+h)\|_{L^2_v}  ( \|u_{n;j}(t)\|_{L^2_v}  + \| u_{n;j}(t+h)\|_{L^2_v} ) \\
&\leq  \|u_{n;j}(t) - u_{n;j}(t+h)\|_{L^2_v}    \|u_{0;j} \|_{L^2_v}    \\
&\leq  C \|u_{n;j}(t) - u_{n;j}(t+h)\|_{L^2_v}   
\end{align*}
for some $C=C(\| u_0 \|_{L^2_v} )>0$.}
By~\eqref{finite vol}, \eqref{mild sol def and converg}, \eqref{Weak
  est 2}, and \eqref{time der est for u},  if we divide both sides by
$h$, we get
\begin{equation*}
   \frac{1}{h}\int_t^{t+h} \int_\Omega |\sqrt{Q}\nabla u_j|^p\, dx\, dt 
\leq        \frac{C }{  h }   \|u_j(t) - u_j(t+h)\|_{L^2_v	}    
\leq     \frac{   C \|u_{0,j}\|_{L^2_v}}{  (p-1) \tau  }     +o_h(1)
\leq  \frac{   C \|u_{0}\|_{L^2_v}}{  (p-1) \tau  }     +o_h(1) .
\end{equation*}
Therefore, if we take the limit as $h\rightarrow 0$, we get
\begin{equation}
\label{Est j 1}
\|\nabla u_j \|_{L^\infty ([\tau,T]; \cL^p_Q(\Omega))} <M.
\end{equation}
Here and in the sequel, $M\in (0,\infty)$ is a constant independent of
$j$.
It follows from~\eqref{Weak est 2} that
\begin{equation}
\label{Est j 2}
\|\nabla u_j \|_{L^p ((0,T); \cL^p_Q(\Omega))}  \leq M . 
\end{equation}
By the argument used to prove \eqref{Weak est 3} and by \eqref{Est j
  1}, we have that
\begin{equation*}
\|\partial_t u_j\|_{L^2( (\tau,T);L^2_v(\Omega))}<M.
\end{equation*}
Therefore,
possibly after passing to  a subsequence if necessary, we have that
\begin{align}\label{Weak converg u 1}
 \partial_t u_j \rightharpoonup \partial_t u   \quad  \text{in} \quad L^2_{loc}((0,T);L^2_v(\Omega)) .
\end{align}

If we use $u_j(\tau)$ as initial data, 
then~\eqref{Strong formulation} becomes
\begin{equation}\label{Weak formulation 2}
\int_\tau^T\int_\Omega \partial_t u_j(t) \phi (t)\, dv  dt   + \int_\tau^T \int_\Omega |\sqrt{Q} \nabla u_j|^{p-2} Q \nabla  u_j \cdot \nabla \phi  \,dx \, dt  
=   0.
\end{equation}
In particular, \eqref{Weak formulation 0} and \eqref{Weak formulation 2} hold  for $\phi=u$.
Since \eqref{Weak converg u 1} implies
$$
\lim\limits_{j\to \infty} \int_\tau^T\int_\Omega u (t) \partial_t u_j(t) \, dv  dt=\int_\tau^T\int_\Omega u (t)\partial_t u (t) \, dv  dt,
$$
it follows from \eqref{Weak formulation 0} and \eqref{Weak formulation
  2} that
\begin{equation*}
\lim\limits_{j\to \infty}   \int_\tau^T \int_\Omega |\sqrt{Q} \nabla u_j|^{p-2} Q \nabla  u_j \cdot \nabla u  \,dx \, dt 
=  \int_\tau^T \int_\Omega |\sqrt{Q} \nabla u |^{p-2} Q \nabla  u  \cdot \nabla u  \,dx \, dt.
\end{equation*}
If we now use $u_j$ as a test  function  in \eqref{Weak formulation 0}
and \eqref{Weak formulation 2}, we get
\begin{multline*}
\int_\tau^T \int_\Omega u_j \partial_t  (u -u_j) \, dv \, dt
=  \int_\tau^T\int_\Omega \left(|\sqrt{Q} \nabla u_j|^{p-2} Q \nabla u_j
  - |\sqrt{Q} \nabla u |^{p-2} Q \nabla  u   \right)\cdot  \nabla u_j \, dx\,dt   \\
= \int_\tau^T\int_\Omega \left(|\sqrt{Q} \nabla u_j|^{p-2} Q \nabla u_j
  - |\sqrt{Q} \nabla u |^{p-2} Q \nabla  u   \right)\cdot  \nabla (u_j -u)\, dx\,dt   + o_j(1), 
\end{multline*}
where $o_j(1)\to 0$ as $j\to \infty$.
Arguing as Section~\ref{subsec:strong} (see~\eqref{Strong convergence 1})  and using \eqref{Weak converg u 1}, we infer that
\begin{equation*}
\nabla u_j\to \nabla u \quad \text{in } L^p((\tau,T);\cL^p_Q(\Omega)) .
\end{equation*}
Because of \eqref{Est j 2}, we further have
\begin{equation}
\label{Property of u L2}
\nabla u_j \rightharpoonup
\nabla u   \quad \text{in } L^p((0,T); \cL^p_Q(\Omega)).
\end{equation}
On the other hand, \eqref{Weak formulation 0} is equivalent to
\begin{equation}
\label{Weak formulation 3}
\int_\tau^T \int_\Omega |\sqrt{Q} \nabla u|^{p-2} Q \nabla  u
\cdot \nabla \phi  \,dx \, dt  
=  \int_\tau^T\int_\Omega u(t)  \partial_t\phi (t)\, dv\,  dt
+    \int_\Omega u(\tau) \phi(\tau)\, dv  - \int_\Omega u(T) \phi(T)\, dv .
\end{equation}
By \eqref{Property of u L2}, if take the limit as $\tau\to 0^+$ in
\eqref{Weak formulation 3},  we get that $u$ is a weak solution.



\subsection{Uniqueness of weak solutions}
To see that weak solutions to~\eqref{p-Lap-Dirichlet} are unique, 
assume to the contrary that for some initial data $u_0 \in
L^2_v(\Omega)$,  there exist two   weak solutions $u, \tilde{u}$. 
Fix $0<t_1<t_2<T$; by Definition~\ref{Def: Solution} we have that 
{$u,\tilde{u}\in W^{1,2} ([t_1,t_2]; L^2_v(\Omega))$} and $\nabla u  , \nabla \tilde{u} \in L^p((0,T); \cL^p_Q(\Omega))$. 
{If we consider the \eqref{p-Lap-Dirichlet} on
  $[t_1,t_2]$, then  the function  
$$
\phi(t)=\ (u -\tilde{u} )(t)
$$
has the regularity given in Remark~\ref{rem:weak-test-function} and thus
is a valid test function in \eqref{Weak formulation} on $[t_1,t_2]$.}
If we use this as the test function against both $u$ and $\tilde{u}$,
subtract the resulting equations, and apply the fundamental theorem of
calculus and Fubini's theorem, we get
\begin{align*}
  & \int_{t_1}^{t_2} \int_\Omega (|\sqrt{Q} \nabla u|^{p-2} Q \nabla  u
  -|\sqrt{Q} \nabla \tilde{u}|^{p-2} Q \nabla  \tilde{u}  )(t,x) \cdot
  \nabla  (u-\tilde{u}   )(t,x)  \,dx \, dt \\
  & \qquad = {
  \int_{t_1}^{t_2} \int_\Omega (u-\tilde{u})
    (\partial_t u - \partial_t \tilde{u}) dv\,dt
    }
    + \int_\Omega (u(t_1)-\tilde{u}(t_1))^2 dv
    - \int_\Omega (u(t_2)-\tilde{u}(t_2))^2 dv \\
  & \qquad = -\frac{1}{2}\int_{t_1}^{t_2} \int_\Omega \partial_t\big( (u -\tilde{u}
    )^2\big)(t,x)\, dv\, dt.
\end{align*}
Here we have used the fact that $(u-\tilde{u})^2$ is absolutely continuous on $[t_1,t_2]$ with respect to $L^1_v(\Omega)$.
Further, it follows from \eqref{Numerical inequality} and \eqref{Numerical inequality 3} that
\[ 0 \leq  \int_{t_1}^{t_2} \int_\Omega (|\sqrt{Q} \nabla u|^{p-2} Q \nabla  u
  -|\sqrt{Q} \nabla \tilde{u}|^{p-2} Q \nabla  \tilde{u}  )(t,x) \cdot
  \nabla  (u-\tilde{u}   )(t,x)  \,dx \, dt. \]
If we combine these two estimates, we get that
\[ \int_\Omega (u(t_2)-\tilde{u}(t_2))^2 \,dv \leq \int_\Omega
  (u(t_1)-\tilde{u}(t_1))^2 \,dv. \]
Since $u,\tilde{u}\in C([0,T]; L^2_v(\Omega))$ and
$u(0)=\tilde{u}(0)$,  if we take the limit as $t_1\rightarrow 0$ we
get that for every $t_2 \in (0,T]$,
\[ \int_\Omega (u(t_2)-\tilde{u}(t_2))^2 \,dv \leq 0.  \]
Hence, we must have that $u(t)=\tilde{u}(t)$ almost everywhere in
$\Omega$ for all $t\in [0,T]$.

\subsection{Comparison principle for weak solutions}
{To verify \eqref{comparison principle}, we denote the solutions of \eqref{p-Lap-Dirichlet-discretized} with initial data $u_{0,i}$, $i=1,\,2$, by $u_{n,k;i}$. 
Since $\partial\sDp$ is completely accretive, if we apply
Definition~\ref{Def: Complete Accretivity} to the pairs
\[ (u_{n,k;i},u_{n,k-1;i}-u_{n,k;i}) \in {\rm gr}(\textstyle\frac{T}{n} \partial
\sDp),  \qquad i=1,\,2, \, {\forall k=1,\cdots,n,} \]
and if we take $\lambda=1$ in {Definition~\ref{Def: Complete Accretivity}}, we get
$$
u_{n,k;1}-u_{n,k;2}\ll u_{n,k-1;1}-u_{n,k-1;2},\quad   {\forall k=1,\cdots,n}.
$$
If we choose $j(x)=\max\{0,x\} \in J_0$ (as defined in~\eqref{class J0}), we get that
$$
\int_\Omega (u_{n,k;1} - u_{n,k;2} )^+ \,dv \leq \int_\Omega (u_{n,k-1;1} - u_{n,k-1;2} )^+ \, dv ,\quad   {\forall k=1,\cdots,n}.
$$

Let $u_{n;i}$, $i=1,\,2$, be the piecewise constant functions defined
by the $u_{n,k;i}$ as in~\eqref{piecewise constant fn}.  Then we have
that
\begin{equation} \label{eqn:approx-compare}
\int_\Omega (u_{n;1} - u_{n;2} )^+(t) \,dv \leq \int_\Omega (u_{0,1} - u_{0,2} )^+ \, dv,\quad   {\forall t\in (0,T]}. 
\end{equation}
Moreover, these
approximate solutions, if we pass to a subsequence,  converge in norm and pointwise
$v$-a.e. (see \eqref{mild sol def and converg}) to the mild solutions
$u_1,\,u_2$. 
More precisely, for any $t\in (0,T]$, a subsequence of
$\{(u_{n;1}-u_{n;2})^+(t)\}_{n=1}^\infty$ converges $v$-a.e. to $(u_1-u_2)^+(t)$. 
Then \eqref{comparison principle}  follows immediately from
Fatou's lemma and~\eqref{eqn:approx-compare}. 

}


\section{Asymptotic behavior of weak solutions}\label{Section:strong-asymptotics}

In this section we will prove Theorems~\ref{Thm:strong asymptotics}
and~\ref{Thm: finite time extinction}.  For brevity, throughout this
section we will write $\sigma$ for $\sigma_p$, the gain in the Sobolev
inequality in Hypothesis~\ref{hyp:sobolev}.


\subsection{Ultracontractive bounds}\label{Section: Decay estimate}
In this subsection we prove Theorem~\ref{Thm:strong asymptotics}. 
First, we claim that in order to show \eqref{asymptotics L2}, it
suffices to consider nonnegative initial data $u_0$.  This follows
from the comparison principle~\eqref{comparison principle}.  
Given any $u_0\in L^2_v(\Omega)$, let $u$ be  the solution of
\eqref{p-Lap-Dirichlet} with initial datum $u_0$; this exists by Theorem~\ref{Thm: strong solution}. 
To apply~\eqref{comparison principle} more easily, let   $u_1=u$,
$u_{0,1}=u_0$. Define $u_{0,2}=\max\{u_0,0 \}$ and let $u_2$ be the
solution of \eqref{p-Lap-Dirichlet} with initial datum $u_{0,2}$.
Then by \eqref{comparison principle}, we have that $u (t) = u_1(t) \leq u_2 (t)$ $v$-a.e. for all $t\geq 0$.
Similarly, if we let $u_3$ is the solution  of \eqref{p-Lap-Dirichlet}
with initial datum $\min\{u_0,0\}$, then  $u_3(t)  \leq u (t)$ $v$-a.e. for all $t\geq 0$. 
It then follows that  since~\eqref{asymptotics L2}  holds for $u_2$
and $u_3$, it also holds for $u$.

\medskip

Hereafter, we will assume $u_0$ is non-negative.  
It then follows from \eqref{comparison principle} that $u\geq 0$ $v$-a.e.
We first consider the case $u_0\in \HQp(\Omega)\cap L^\infty_v(\Omega)$.
Let   $u$ be the strong solution to \eqref{p-Lap-Dirichlet}. 
The proof for the smoothing effect of  $u$ is based on the following logarithmic Sobolev inequality.

\begin{lem}\label{Lem: Log Sob}
Given $f\in \HQp(\Omega)$, $r\in [1,\sigma p)$, and $\varepsilon>0$,
we have that
$$
\int_\Omega \frac{|f|^r}{\|f\|_{L^r_v}^r} \log \left( \frac{|f| }{\|f\|_{L^r_v} } \right) \, dv  \leq  \frac{\sigma}{\sigma p -r}\left(  \varepsilon { M_p^p} \frac{\| \nabla f\|_{\cL^p_Q}^p }{ \|f\|_{L^r_v}^p  } -\log \varepsilon  \right).
$$
\end{lem}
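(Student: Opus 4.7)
The proof proposal is to derive this logarithmic Sobolev inequality directly from the Sobolev inequality with gain in Hypothesis~\ref{hyp:sobolev} via Jensen's inequality, following the classical strategy behind Gross-type logarithmic inequalities.

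First, I would normalize: set $g = f/\|f\|_{L^r_v}$ so that $|g|^r\,dv$ is a probability measure on $\Omega$, and observe that the desired inequality becomes
\[
\int_\Omega |g|^r \log |g|\, dv \leq \frac{\sigma p}{p(\sigma p - r)}\left(\varepsilon M_p^p \|\nabla g\|_{\cL^p_Q}^p - \log \varepsilon\right).
\]
Since by Proposition~\ref{Prop: density of ess bdd} and standard truncation we may assume $g\in L^{\sigma p}_v$, the idea is to apply Jensen's inequality to the concave function $\log$ against the probability measure $|g|^r\,dv$ with the exponent chosen so that $|g|^r\cdot|g|^{\delta}=|g|^{\sigma p}$. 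Explicitly, setting $\delta = \sigma p - r > 0$, Jensen's inequality gives
\[
\delta \int_\Omega |g|^r \log|g|\, dv
= \int_\Omega |g|^r \log|g|^{\delta}\, dv
\leq \log \int_\Omega |g|^{r+\delta}\, dv
= \sigma p \log \|g\|_{L^{\sigma p}_v}.
\]

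Next, I would invoke Hypothesis~\ref{hyp:sobolev} to replace $\|g\|_{L^{\sigma p}_v}$ by $M_p\|\nabla g\|_{\cL^p_Q}$, yielding
\[
\int_\Omega |g|^r \log|g|\, dv \leq \frac{\sigma p}{\sigma p - r}\log\!\left(M_p \|\nabla g\|_{\cL^p_Q}\right) = \frac{\sigma}{\sigma p - r}\log\!\left(M_p^p \|\nabla g\|_{\cL^p_Q}^p\right).
\]
Finally, to insert the free parameter $\varepsilon$, I would use the elementary inequality $\log z \leq z$ (or $\log z \leq z - 1$) for $z > 0$ applied to $z = \varepsilon M_p^p\|\nabla g\|_{\cL^p_Q}^p$:
\[
\log\!\left(M_p^p \|\nabla g\|_{\cL^p_Q}^p\right) = \log\!\left(\varepsilon M_p^p \|\nabla g\|_{\cL^p_Q}^p\right) - \log\varepsilon \leq \varepsilon M_p^p \|\nabla g\|_{\cL^p_Q}^p - \log\varepsilon.
\]
Combining this with the previous estimate and undoing the normalization (noting $\nabla g = \nabla f/\|f\|_{L^r_v}$, so $\|\nabla g\|_{\cL^p_Q}^p = \|\nabla f\|_{\cL^p_Q}^p/\|f\|_{L^r_v}^p$) gives the stated inequality.

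There is no serious obstacle here; the proof is really a three-line computation once one picks the right normalization and the right exponent in Jensen's inequality. The only mildly delicate point is justifying Jensen's inequality when $\|g\|_{L^{\sigma p}_v}$ could be infinite, but in that case the right-hand side of the asserted inequality is $+\infty$ (for any fixed $\varepsilon>0$) and there is nothing to prove, so one can freely assume $\nabla f \in \cL^p_Q(\Omega)$ with $\|\nabla f\|_{\cL^p_Q}<\infty$ and thereby $\|g\|_{L^{\sigma p}_v}<\infty$ by Hypothesis~\ref{hyp:sobolev}.
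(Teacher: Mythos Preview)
Your proposal is correct and follows essentially the same route as the paper: normalize so that $|g|^r\,dv$ is a probability measure, apply Jensen's inequality with exponent $\sigma p - r$ to get the entropy bounded by $\frac{\sigma p}{\sigma p - r}\log\|g\|_{L^{\sigma p}_v}$, invoke the Sobolev inequality \eqref{Sobolev ineq gain}, and then use $\log z \le z$ to introduce the free parameter $\varepsilon$. The paper carries the factor $\|f\|_{L^r_v}$ along rather than normalizing up front, but the computation is identical.
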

\begin{proof}
{Consider 
\begin{equation*}
\int_\Omega \frac{|f|^r}{\|f\|_{L^r_v}^r} \log \left( \frac{|f|
  }{\|f\|_{L^r_v} } \right) \, dv
=
\frac{1}{\sigma p-r} \int_\Omega \frac{|f|^r}{\|f\|_{L^r_v}^r} \log \left( \frac{|f|^{\sigma p-r} }{\|f\|_{L^r_v}^{\sigma p-r} } \right) \, dv. 
\end{equation*}
We now use Jensen's inequality with the probability measure
$$d\mu=\frac{|f|^r}{\|f\|_{L^r_v}^r}dv$$
to get
$$\int_{\Omega}\log \left( \frac{|f|^{\sigma p-r} }{\|f\|_{L^r_v}^{\sigma p-r} } \right) \,d\mu \leq \log\left(\int_{\Omega}  \frac{|f|^{\sigma p-r} }{\|f\|_{L^r_v}^{\sigma p-r} } \,d\mu\right)=\log \left( \frac{\|f\|_{L^{\sigma p}_v}^{\sigma
       p}}
   { \|f\|_{L^r_v}^{\sigma p} } \right).$$
By inequality \eqref{Sobolev ineq} and since $\log(x)\leq x$ for
$x\geq 0$,  we have
   
   \begin{multline*}
\int_\Omega \frac{|f|^r}{\|f\|_{L^r_v}^r} \log \left( \frac{|f|
  }{\|f\|_{L^r_v} } \right) \, dv\leq
 \frac{1}{\sigma p-r} \log \left( \frac{\|f\|_{L^{\sigma p}_v}^{\sigma
       p}}
   { \|f\|_{L^r_v}^{\sigma p} } \right) \leq \frac{\sigma p}{\sigma p -r }\log \left(M_p \frac{\| \nabla f\|_{\cL^p_Q} }{ \|f\|_{L^r_v}  } \right)
\\  \leq
  \frac{\sigma}{\sigma p -r}\left(  \varepsilon       M_p^p
    \frac{\| \nabla f\|_{\cL^p_Q}^p }{ \|f\|_{L^r_v}^p  } -\log \varepsilon  \right).
\end{multline*}}
\end{proof}
 
\begin{remark}
 We note that ultracontractive bounds for the
  $p$-Laplacian has also been proved using an argument that does not
  involve  the Logarithm Sobolev inequality. See \cite{Porzio09, Porzio15}.
\end{remark}


\begin{prop}\label{Prop: time der Lr norm}
  Given any $r\geq 1$, we have that
\begin{equation} \label{eqn:time-Lr}
  \frac{d}{dt} \|u(t)\|_{L^r_v}^r \leq
  -r(r-1) \left( \frac{p}{r+p-2} \right)^p \int_\Omega
  \left|\sqrt{Q( x)} \nabla   \left( (u(t,x))^{\frac{r+p-2}{p}} \right) \right|^p \, dx  
\end{equation}
for a.e. $t>0$.
\end{prop}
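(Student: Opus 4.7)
The plan is to test the strong form of the equation against a regularized power $\phi = (u+\varepsilon)^{r-1}-\varepsilon^{r-1}$, pass to the limit $\varepsilon \downarrow 0$, and then remove the auxiliary assumption of bounded initial data by approximation. Heuristically, if $u$ were smooth and strictly positive, choosing $\phi=u^{r-1}$ in~\eqref{Strong formulation} would give
\[
\tfrac{1}{r}\tfrac{d}{dt}\|u\|_{L^r_v}^r = -(r-1)\int_\Omega u^{r-2}|\sqrt{Q}\nabla u|^p\,dx,
\]
and writing $\alpha=(r+p-2)/p$, so that $\nabla(u^\alpha)=\alpha u^{\alpha-1}\nabla u$ and $p(\alpha-1)=r-2$, the chain-rule identity
\[
u^{r-2}|\sqrt{Q}\nabla u|^p = \alpha^{-p}|\sqrt{Q}\nabla(u^\alpha)|^p = \Bigl(\frac{p}{r+p-2}\Bigr)^{\!p}\bigl|\sqrt{Q}\nabla(u^{(r+p-2)/p})\bigr|^p
\]
produces the right-hand side of~\eqref{eqn:time-Lr}, in fact as an equality. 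The reduction to nonnegative initial data at the start of Section~\ref{Section:strong-asymptotics} ensures $u\geq 0$ $v$-a.e., so these formal manipulations make sense.

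First I would handle the case $u_0\in\HQp(\Omega)\cap L^\infty_v(\Omega)$, $u_0\geq 0$, for which $u$ is a strong solution by Theorem~\ref{Thm: strong solution} and $0\leq u(t)\leq \|u_0\|_{L^\infty_v}$. For $r>1$ and $\varepsilon>0$, Proposition~\ref{Prop: chain rule A 2} (applied to $\phi(s)=s^{r-1}$) gives $\phi_\varepsilon(t):=(u(t)+\varepsilon)^{r-1}-\varepsilon^{r-1}\in\HQp(\Omega)\cap L^\infty_v(\Omega)$ with $\nabla\phi_\varepsilon=(r-1)(u+\varepsilon)^{r-2}\nabla u$. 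Using $\phi_\varepsilon$ in~\eqref{Strong formulation} on $[t_1,t_2]\subset(0,T)$ and recognizing the time term as the distributional derivative of the primitive $\Psi_\varepsilon(s)=r^{-1}((s+\varepsilon)^r-\varepsilon^r)-\varepsilon^{r-1}s$ yields
\[
\int_\Omega\bigl[\Psi_\varepsilon(u(t_2))-\Psi_\varepsilon(u(t_1))\bigr]dv = -(r-1)\int_{t_1}^{t_2}\!\!\int_\Omega (u+\varepsilon)^{r-2}|\sqrt{Q}\nabla u|^p\,dx\,dt.
\]
Letting $\varepsilon\downarrow 0$, the left side tends to $r^{-1}[\|u(t_2)\|_{L^r_v}^r-\|u(t_1)\|_{L^r_v}^r]$ by dominated convergence; on the right, if $1<r\leq 2$ the integrand is monotone nondecreasing (since $r-2\leq 0$) so monotone convergence applies, while for $r>2$ it is dominated by $(\|u_0\|_{L^\infty_v}+1)^{r-2}|\sqrt{Q}\nabla u|^p\in L^1(\Omega\times(t_1,t_2))$ thanks to Theorem~\ref{Thm: strong solution}. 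Dividing by $t_2-t_1$, letting $t_2\downarrow t_1$, and applying the chain-rule identity above yields the pointwise equality in~\eqref{eqn:time-Lr} for a.e.\ $t$. The case $r=1$ is trivial since the right-hand side vanishes.

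To pass to general $u_0\in L^2_v(\Omega)\cap L^r_v(\Omega)$, $u_0\geq 0$, I would approximate by $u_{0,n}\in\HQp(\Omega)\cap L^\infty_v(\Omega)$ with $u_{0,n}\geq 0$ and $u_{0,n}\to u_0$ in $L^2_v(\Omega)$, letting $u_n$ denote the corresponding strong solutions. By the continuous-dependence argument in Section~\ref{section:proof-global-strong}, $u_n\to u$ in $C([0,T];L^2_v(\Omega))$, and after passing to a subsequence, $u_n(t,\cdot)\to u(t,\cdot)$ pointwise $v$-a.e. Set $w_n:=u_n^{(r+p-2)/p}$ and $w:=u^{(r+p-2)/p}$. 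Integrating the equality already established for each $u_n$ over $[t_1,t_2]$, the $L^r_v$ norms on the left converge to the analogous expressions for $u$. On the right, using the weak-$\cL^p_Q$ compactness of $\nabla w_n$ (inherited from the uniform bound provided by the equality itself together with the pointwise convergence $w_n\to w$), and mimicking the Egoroff-based argument used to prove weak lower semicontinuity of $\sDp$ in Proposition~\ref{Prop: maximal accretivee}, I would identify the weak $\cL^p_Q$-limit of $\nabla w_n$ as $\nabla w$ and conclude
\[
\int_{t_1}^{t_2}\!\!\int_\Omega|\sqrt{Q}\nabla w|^p\,dx\,dt \leq \liminf_{n\to\infty}\int_{t_1}^{t_2}\!\!\int_\Omega|\sqrt{Q}\nabla w_n|^p\,dx\,dt.
\]
Combining and differentiating in $t_2$ produces the inequality~\eqref{eqn:time-Lr} for a.e.\ $t$.

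The main obstacle I anticipate is this final lower-semicontinuity step: while the chain rule for bounded data is clean, in the general case I only have pointwise convergence of $u_n$ and of $w_n=u_n^{(r+p-2)/p}$, with no a priori uniform $\HQp$-bound on $u_n$ (one has such a bound only on $[\tau,T]$ for $\tau>0$, via Theorem~\ref{Thm: strong solution}). I expect to handle this by localizing to $[\tau,T]$, extracting a weakly convergent subsequence of $\nabla w_n$ in $\cL^p_Q$, and identifying the limit with $\nabla w$ by the distributional-convergence-plus-$W^{1,1}_{loc}$ argument in Proposition~\ref{Prop: maximal accretivee}, which crucially uses Hypothesis~\ref{hyp:wt-matrix-hypotheses} to bridge between $\cL^p_Q$ and $L^1_{loc}$.
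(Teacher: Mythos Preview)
Your argument for the bounded case $u_0\in\HQp(\Omega)\cap L^\infty_v(\Omega)$ is essentially the paper's proof, only packaged slightly differently: the paper splits into the subcases $r\geq 2$ (where $u^{r-1}$ is a legitimate test function directly, since $\phi(s)=s^{r-1}$ is $C^1$ on $[0,\|u_0\|_{L^\infty_v}]$) and $1<r<2$ (where it regularizes with $u_n=u+1/n$ exactly as you do), whereas you use the $\varepsilon$-shift uniformly for all $r>1$. Both routes pass to the limit via dominated/monotone convergence on the spatial integral, and both invoke lower semicontinuity of $\sDp$ (from the proof of Proposition~\ref{Prop: maximal accretivee}) to identify the limit with $\int_\Omega|\sqrt{Q}\nabla(u^{(r+p-2)/p})|^p\,dx$ when $\alpha=(r+p-2)/p<1$; note that this step actually only gives the inequality in~\eqref{eqn:time-Lr}, not the equality you assert, since the chain rule for $u^\alpha$ with $\alpha<1$ is not covered by Proposition~\ref{Prop: chain rule A}.

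Your second step, extending to general $u_0\in L^2_v(\Omega)\cap L^r_v(\Omega)$, is unnecessary: the proposition is stated and used in the paper only under the standing assumption $u_0\in\HQp(\Omega)\cap L^\infty_v(\Omega)$ (declared just before Lemma~\ref{Lem: Log Sob}). The passage to general initial data is carried out later, at the level of the ultracontractive bound~\eqref{asymptotics} itself, by approximating $u_0$ in $L^{p_0}_v$ and using the $C([0,T];L^2_v)$ convergence of solutions---so none of the lower-semicontinuity gymnastics you anticipate as the ``main obstacle'' is needed.
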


\begin{proof}
By the absolute continuity of $u^r$, we have that
\begin{align*}
   u^r(t+h,x)  - u^r(t,x)    
=   \int_t^{t+h} \partial_s  u^r(s,x)  \, ds  
=  r \int_t^{t+h} ( \partial_s u  u^{r-1} ) (s,x)\, ds . 
\end{align*}
By \eqref{L infty contraction}, 
$(\partial_s u  u^{r-1} )(s,x) $ is absolutely integrable on $([t,t+h], dt)\times (\Omega, dv)$. It follows from Fubini's Theorem that
\begin{align*}
 \frac{1}{h}\int_\Omega (u^r(t+h,x)  -  u^r(t,x)  )\, dv  
&=  \frac{r}{h} \int_\Omega \int_t^{t+h} (\partial_s u   u^{r-1} )(s,x) \, ds \, dv \\
&=  \frac{r}{h}\int_t^{t+h} \int_\Omega  (\partial_s u  u^{r-1} )(s,x) \, dv \, ds  
\to   r \int_\Omega  (\partial_t u  u^{r-1} )(t,x) \, dv
\end{align*}
for a.e. $0<t<T$ as $h\rightarrow 0$.  The limit follows from the
Lebesgue Differentiation Theorem.  Therefore,
\begin{equation}
\label{time der}
\frac{d}{dt} \int_\Omega   u^r(t,x) \, dv= r\int_\Omega  (\partial_t u  u^{r-1} )(t,x)\, dv.
\end{equation}

\medskip

Suppose first that $r\geq 2$; then by \eqref{L infty contraction} we
have that $u^{r-1} \in \HQp(\Omega)$ and
$\nabla (u^{r-1})=(r-1)u^{r-2} \nabla u $. {Since $u$
  is a solution on $(0,T)$ it is a solution on $[t,t+h]$ for any $t\in
  (0,T)$.  {Moreover,   when $r\geq 2$,}
$u^{r-1}$ is a valid test function in \eqref{Strong formulation} on
this interval.
Therefore, if we let $\phi= u^{r-1}$ in \eqref{Strong formulation} and
take the limit as $h\rightarrow 0$, then by the Lebesgue
Differentiation Theorem, we get
\begin{align*}
\frac{d}{dt} \|u(t)\|_{L^r_v}^r =&   r \int_\Omega \partial_t u(t,x) u^{r-1}(t,x) \, dv \\
=&   -r(r-1)\int_\Omega u^{r-2}(t,x)   |\sqrt{Q(t,x)} \nabla u(t,x)|^p \, dx \\
=& -r(r-1) \left( \frac{p}{r+p-2} \right)^p \int_\Omega    \left|\sqrt{Q( x)} \nabla   \left( (u(t,x))^{\frac{r+p-2}{p}} \right) \right|^p \, dx.    
\end{align*}
}
Now suppose that $1<r<2$.  For this case we have to  use an approximation argument. 
For every $n\in \N$, let $u_n=u+1/n$. 
Then by Proposition~\ref{Prop: chain rule A 2}, $\displaystyle
u_n^{r-1} - (1/n)^{r-1} $ is  a valid test function in \eqref{Strong
  formulation}.  
If we argue as we did to prove~\eqref{time der}, we have that
\begin{multline*}
 \frac{d}{dt} \left[ \int_\Omega  u_n^r\, dv  -  \left(\frac{1}{n}
  \right)^{r-1}   \int_\Omega u_n \, dv \right] 
=
\int_\Omega \partial_t u \left[ r u_n^{r-1} - \left(\frac{1}{n}
  \right)^{r-1} \right] \, dv   \\
=
- r \int_\Omega |\sqrt{Q} \nabla u_n|^{p-2}  Q \nabla u_n \cdot \nabla  u_n^{r-1}   \,   dx
=
- r(r-1) \left( \frac{p}{r+p-2} \right)^p \int_\Omega
\left|\sqrt{Q } \nabla  \left( u_n^{\frac{r+p-2}{p}} \right) \right|^p \, dx  .
\end{multline*}
In the second equality we have used Proposition~\ref{Prop: chain rule A}.

By the dominated convergence theorem,
$(u_n)^{\frac{r+p-2}{p}}\to u^{\frac{r+p-2}{p}}$ in $L^2_v(\Omega)$;
hence, by Proposition~\ref{Prop: m-accretive} we have that
$$
\int_\Omega     \left|\sqrt{Q } \nabla  \left( u^{\frac{r+p-2}{p}} \right) \right|^p \, dx \leq \liminf\limits_{n\to \infty}\int_\Omega     \left|\sqrt{Q } \nabla  \left( u_n^{\frac{r+p-2}{p}} \right) \right|^p \, dx.
$$
Again by the dominated convergence theorem, 
$$
\lim\limits_{n\to \infty} \int_\Omega \partial_t u   u_n^{r-1}\, dv=
\int_\Omega \partial_t u   u^{r-1}\, dv= \frac{1}{r}
\frac{d}{dt}\|u\|_{L^r_v}^r;
$$
moreover, it is immediate that
$$
\lim\limits_{n\to \infty} \left(\frac{1}{n} \right)^{r-1} \int_\Omega \partial_t u \, dv= 0.
$$
If we combine these estimate we get~\eqref{eqn:time-Lr}.

\medskip

Finally, if  $r=1$,~\eqref{eqn:time-Lr}  follows from  the dominated convergence theorem and \eqref{L infty contraction}:
$$
\frac{d}{dt} \|u(t)\|_{L^1_v}
=\int_\Omega \partial_t u \, dv = \lim\limits_{r\to 1^+} \int_\Omega
\partial_t u u^{r-1}\, dv
\leq 0.
$$  
\end{proof}

\medskip

We can now prove Theorem~\ref{Thm:strong asymptotics}. Recall that in
our hypotheses, we fixed $q_0$ such that
\begin{equation}
\label{cond q0}
q_0 \geq 1 \quad \text{and} \quad q_0>q_c:=\sigma'(2-p),
\end{equation}
where $\sigma'$ is the H\"older conjugate of $\sigma$.
Fix $t\in (0,\infty)$ and let $q:[0,t)\to [q_0,\infty)$ 
be a {\color{teal}strictly increasing}  $C^1$ function such that $q(0)=q_0$ and
$\lim\limits_{s\to t^-}q(s)=\infty$.

{ We first consider the special case when $u_0 \in L^\infty_v(\Omega) \cap
\HQp(\Omega) \subset L^2_v(\Omega) \cap
\HQp(\Omega)$.  By Theorem~\ref{Thm: strong solution} let $u$ be the
associated strong solution of~\eqref{p-Lap-Dirichlet}.}
Set $Y(s)=\log \|u(s)\|_{L^{q(s)}_v}$ and $b(s)=q(s)+p-2$.
Define   the weighted Young functional $J_v:[1,\infty)\times X \to \R$, where $X=L^1_v(\Omega)=\bigcap\limits_{h\geq 1} L^h_v(\Omega)$, by
$$
J_v(h,f)= \int_\Omega  \frac{|f |^h }{ \|f\|_{L^h_v}^h} \log\frac{|f |}{\|f \|_{L^h_v} } \, dv  .
$$
Note that $h J_v(h,f)=J_v(1,|f|^h)$.
{
One can apply Lemma~\ref{Lem: Log Sob} and Proposition~\ref{Prop: time der Lr norm} to derive that}
\begin{align}
\notag
&\frac{d}{ds} Y(s)  \\
\notag
& \qquad = - \frac{\dot{q}(s)}{q (s)} \log \|u(s)\|_{L^{q(s)}_v}  
+ \frac{1}{q(s) \|u(s)\|_{L^{q(s)}_v}^{ q(s)}} \frac{d}{ds} \|u(s)\|_{L^{q(s)}_v}^{q(s)} \\
\notag
&  \qquad \leq - \frac{\dot{q}(s)}{q (s)} \log \|u(s)\|_{L^{q(s)}_v}   + \frac{\dot{q}(s)}{q(s) \|u(s)\|_{L^{q(s)}_v}^{ q(s)}}\int_\Omega (u(s,x))^{q(s)} \log (u(s,x)) \, dv  \\
\label{mul der}
&\qquad \qquad - \frac{q(s)-1 }{  \|u(s)\|_{L^{q(s)}_v}^{ q(s)}} \left( \frac{p}{b(s)} \right)^p \int_\Omega    \left|\sqrt{Q( x)} \nabla   \left( (u(s,x))^{\frac{b(s)}{p}}\right) \right|^p \, dx \\
\notag
&\qquad  {\color{teal}\leq } \frac{\dot{q}(s)}{q^2(s)  } J_v(1, (u(s))^{q(s)}) -(q(s)-1 )\left(\frac{p}{b(s)} \right)^p \frac{\|u(s)\|_{L^{b(s)r(s)/p}_v}^{ b(s)}}{\|u(s)\|_{L^{q(s)}_v}^{ q(s)}} \frac{\log \varepsilon}{\varepsilon M_p^p}  \\
\notag
& \qquad  \qquad - {\frac{(q(s)-1) (\sigma p -r(s))}{ r(s)  M_p^p \varepsilon \sigma } }\left(\frac{p}{b(s)} \right)^p \frac{\|u(s)\|_{L^{b(s) r(s)/p}_v}^{ b(s)}}{\|u(s)\|_{L^{q(s)}_v}^{ q(s)}} J_v(1,(u(s))^{ b(s)r(s)/p}),
\end{align}
where $r(s)\in [1,\sigma p)$ will be fixed below, and $\dot{q}(s)=\frac{d}{ds}q(s)$.
{
When $q(s) \geq 2$, it follows from a similar proof of \cite[Lemma~2.18]{DavidScott21} that $u(s)^{\frac{b(s)}{p}} \in \HQp(\Omega)$.
When $q(s)<2$, the use of Proposition~\ref{Prop: time der Lr norm} above can be validated as follows. 
Let $f= u(s)^{\frac{b(s)}{p}}$ and $f_n= \left(u(s) + \frac{1}{ n}\right)^{\frac{b(s)}{p}} - \left(   \frac{1}{ n}\right)^{\frac{b(s)}{p}}$. It follows from Proposition~\ref{Prop: chain rule A 2} that $f_n\in \HQp(\Omega)$. Note that $f_n\to f$ a.e. and $|f_n | \leq M$  for some uniform constant $M>0$. 
Proposition~\ref{Prop: time der Lr norm} then implies that
\begin{align}
\notag
& \quad \int_\Omega    \left|\sqrt{Q( x)} \nabla   \left( (u(s,x))^{\frac{b(s)}{p}}\right) \right|^p \, dx  = \| \nabla f \|_{\cL_Q^p}^p \geq   \limsup\limits_{n\to \infty} \| \nabla f_n \|_{\cL_Q^p}^p  \\
\notag
& \geq    \limsup\limits_{n\to \infty}  \left( \frac{  \sigma p - r(s) }{ \sigma    \varepsilon M_p^p } \| f_n \|_{L^{r(s)}_v}^p \int_\Omega \frac{|f_n|^r}{\|f_n\|_{L^{r(s)}_v}^{r(s)}} \log \left( \frac{|f_n|}{\| f_n \|_{L^{r(s)}_v}}  \right) \, dv  + \frac{\log \varepsilon}{\varepsilon M_p^p } \| f_n \|_{L^{r(s)}_v}^p \right)\\
\label{Sobolev ineq app}
&=    \left( \frac{  \sigma p - r(s) }{ \sigma    \varepsilon M_p^p } \| f  \|_{L^{r(s)}_v}^p \int_\Omega \frac{|f |^r}{\|f  \|_{L^{r(s)}_v}^{r(s)}} \log \left( \frac{|f |}{\| f  \|_{L^{r(s)}_v}}  \right) \, dv   + \frac{\log \varepsilon}{\varepsilon M_p^p } \| f  \|_{L^{r(s)}_v}^p \right) \\
\notag 
& =  \|u(s)\|_{L^{b(s)r(s)/p}_v}^{ b(s)}  \left(\frac{  \sigma p - r(s) }{ r(s) \sigma    \varepsilon M_p^p }   J_v(1,(u(s))^{ b(s)r(s)/p}) + \frac{\log \varepsilon}{\varepsilon M_p^p }  \right).
\end{align}
We have used the dominated convergence theorem in \eqref{Sobolev ineq app}.
}
{
To derive \eqref{mul der}, we define $g(r,s):= \| u(s)\|_{L^r_v}^r$. By the dominated convergence theorem, we have
\begin{align*}
\partial_r g(r,s)  =  \partial_r \int_{\Omega} (u(s,x))^r \, dv = \int_\Omega  \partial_r e^{r \log (u(s,x))}\, dv  
 =  \int_\Omega (u(s,x))^r \log ( u(s,x)) \, dv.
\end{align*}
Then Proposition~\ref{Prop: time der Lr norm} implies that
\begin{align*}
\frac{d}{ds} \| u(s)\|_{L^{q(s)}_v}^{q(s)} & = \frac{d}{ds} g(q(s),s)\\
&= \dot{q}(s) \partial_r g(q(s),s) + \partial_s g(q(s),s) \\
&\leq \dot{q}(s)   \int_\Omega (u(s,x))^{q(s)} \log ( u(s,x)) \, dv\\
 &\quad -q(s)(q(s)-1) \left( \frac{p}{b(s) } \right)^p \int_\Omega
  \left|\sqrt{Q( x)} \nabla   \left( (u(s,x))^{\frac{b(s) }{p}} \right) \right|^p \, dx  .
\end{align*}
}
%
If we let
$$
\varepsilon=  \frac{q^2(s)  }{\dot{q}(s)}{ \frac{(q(s)-1) (\sigma
  p-r(s)) }{  M_p^p  \sigma  r(s)} }
\left(\frac{p}{b(s)} \right)^p\frac{\|u(s)\|_{L^{b(s)r(s)/p}_v}^{ b(s)}}{\|u(s)\|_{L^{q(s)}_v}^{ q(s)}},
$$
then we get
\begin{equation*}
  \frac{d}{ds} Y(s)    
  \leq   \frac{\dot{q}(s)}{q^2(s)  }[J_v(1,(u(s))^{q(s)}) - J_v(1,(u(s))^{b(s)r(s)/p})] \\
-\frac{\dot{q}(s)}{q^2(s)  } \frac{\sigma r(s)}{\sigma p-r(s)}     \log \varepsilon(s)   .
\end{equation*}
If we fix
$$
r(s)=\frac{p q(s)}{b(s)},
$$ 
then we have that
\begin{equation*}
 \frac{d}{ds} Y(s)  
  \leq   -\frac{\dot{q}(s)}{q(s)  }\ \frac{\sigma }{\sigma b(s)-q(s)}
  \log\left[  \frac{q(s)}{\dot{q}(s)}
    \frac{ (\sigma b(s)-q(s))(q(s)-1)}{M_p^p \sigma} \left(
      \frac{p}{b(s)}\right)^p   \right]
   - \frac{\dot{q}(s)}{q(s)  }\ \frac{\sigma(p-2)}{\sigma b(s)-q(s)} Y(s) .
 \end{equation*}

Observe that Condition~\eqref{cond q0} implies that $r(s)\in [1,\sigma p)$.
Now set
\begin{align*}
d(s) & =  \frac{\dot{q}(s)}{q(s)  }\ \frac{\sigma(p-2)}{\sigma b(s)-q(s)}, \\
h(s) & = \frac{\dot{q}(s)}{q(s)  }\ \frac{\sigma }{\sigma b(s)-q(s)} 
\,\log\left[  \frac{q(s)}{\dot{q}(s)} \frac{ (\sigma b(s)-q(s)
    )(q(s)-1)}{M_p^p \sigma }
  \left( \frac{p}{b(s)}\right)^p   \right] .
\end{align*}
Then $Y(s)$ satisfies the following differential inequality,
$$
\frac{d}{ds} Y(s) +d(s)Y(s) +h(s) \leq 0, \quad Y(0)=\log\|u_0\|_{{L^{q_0}_v}}.
$$
Hence, $Y(s) \leq Y_L(s)$, where $Y_L(s)$ satisfies the ordinary differential equation
$$
\frac{d}{ds} Y_L(s) +d(s)Y_L(s) +h(s) = 0, \quad Y_L(0)=\log\|u_0\|_{{L^{q_0}_v}}.
$$ 
The solution to this equation is
$$
Y_L(s)=e^{-\int_0^s d(a)\, da}\left[Y(0) - \int_0^s h(a) e^{\int_0^a d(\tau) \, d\tau} \, da \right]=: e^{-D(s)}\left[Y(0) - H(s) \right].
$$
If we set $q(s)=q_0t/(t-s)$, then by a direct computation we have that
\[ 
D(s) = \int_0^s d(a)\, da = \int_0^s \frac{\dot{q}(a)}{q(a)  }\
\frac{\sigma(p-2)}{\sigma(q(a)+p-2)-q(a)}\,da
=   \log \left[\frac{ \sigma'(p-2)t + q_0 t  }{ \sigma'(p-2)(t-s) +   q_0 t } \right];
\]
Therefore, if we take the limit, we get
$$
\lim\limits_{s\to t^-} e^{-D(s)}= \frac{ q_0}{\sigma'(p-2)  + q_0}.
$$

We now rewrite $H(s)$ as
\begin{align*}
H(s)= \frac{[\sigma (p-2)+ (\sigma-1) q_0]\sigma}{q_0} \sum\limits_{k=1}^3 H_k(s),
\end{align*}
where
\begin{align*}
H_1(s) = & \log t \int_0^s \frac{\dot{q}(a)}{[\sigma b(a)- q(a)]^2}\, da, \\
H_2(s) = & \log \left[ \frac{q_0 p^p}{{ M_p^p} \sigma p}
           \right]
           \int_0^s \frac{\dot{q}(a)}{[\sigma b(a)- q(a)]^2}\, da, \\
  H_3(s) =& \int_0^s \frac{\dot{q}(a)}{[\sigma b(a)- q(a)]^2}
            \log\left[ \frac{(q(a)-1) {\color{teal}(\sigma p b(a)-p q(a)) }}{q(a) b(a)^p   } \right]\, da.
\end{align*}
A direct computation shows that
$$
\lim\limits_{s\to t^-}\int_0^s \frac{\dot{q}(a)}{[\sigma b(a)-q(a)]^2}\, da
=\frac{1}{(\sigma-1)[\sigma (p-2)+(\sigma-1) q_0]}, 
$$
and Condition~\eqref{cond q0} ensures that
\begin{equation*}
 \lim\limits_{s\to t^-} H_3(s)
  = \int_{q_0}^\infty \frac{1}{ [(\sigma-1) y+ \sigma(p-2)]^2}
     \log\left[ \frac{(y-1)[p(\sigma-1) y+ \sigma p(p-2)]}{y (y+p-2)^p } \right]\, d y \\
=R_1, 
\end{equation*}
where $R_1$ is a constant depending only on $p$, $q_0$ and $\sigma$. 

Therefore, we have shown that
\begin{equation*}
  Y_L(t)  
= \frac{q_0}{\sigma'(p-2)  + q_0} Y(0) -\frac{\sigma'}{\sigma'(p-2)  + q_0} \log t +R_2  
\end{equation*}
for some constant $R_2$ depending only on $p$, $q_0$, $M_p$ and $\sigma$;
thus,
\begin{align*}
\log \| u (t)\|_{L^{\infty}_v}=& \lim\limits_{s\to t^-} \log \| u(s)\|_{L^{q(s)}_v}= \lim\limits_{s\to t^-}Y(s)\leq \lim\limits_{s\to t^-}Y_L(s)=Y_L(t) .
\end{align*}
If we now exponentiate both sides and rearrange terms, we get
 \begin{equation}
\label{asymptotics}
\| u (t)\|_{L^{\infty}_v} \leq C(\sigma,p,q_0,M_p) \frac{\|u_0\|_{L^{q_0}_v}^\gamma}{t^\beta} ,
\end{equation}
where  $\beta$ and $\gamma$ satisfy \eqref{Def: beta gamma}.

\medskip

Let $p_0=\max\{q_0,2\}$.  Given $ u_0 \in L^{p_0}_v(\Omega)$, fix a
sequence of functions $u_{0,j}$ in   $L^\infty_v(\Omega) \cap
\HQp(\Omega)$  
such that $u_{0,j} \to u_0$ in
$L^{p_0}_v(\Omega)$ and thus in $L^{q_0}_v(\Omega)$. Denote the
corresponding strong solution to \eqref{p-Lap-Dirichlet} with initial
data $u_{0,j}$ by $u_j$.  By \eqref{Strong converg u 1}, $u_j$
converges to the unique weak solution $u$ of \eqref{p-Lap-Dirichlet}
in $C([0,T];L^2_v(\Omega))$.  Therefore, since  by \eqref{asymptotics}
we have that
\begin{equation*}
\| u_j(t)\|_{L^\infty_v} \leq C  \frac{\|   u_{0,j}  \|_{L^{q_0}_v}^\gamma}{t^\beta}  ,
\end{equation*}
we can take the limit as $j\to \infty$ to get~\eqref{asymptotics L2}.  This completes
the proof of Theorem~\ref{Thm:strong asymptotics}.

\subsection{Finite Time Extinction for $1<p<2$}\label{Section: Finite time extinction}

In this subsection we prove Theorem~\ref{Thm: finite time extinction}.
Arguing as we did at the beginning of Section~\ref{Section: Decay estimate}, it suffices to consider nonnegative initial data $u_0$.
Recall $\displaystyle q_c:=\sigma'(2-p)$.  We first consider the case
when $u_0\in \HQp(\Omega)\cap L^{m }_v(\Omega)$, where
$m =\max\{q_c,2\}.$


First suppose that $q_c \geq 1$.   Set $\displaystyle h=\frac{q_c+p-2}{p}$ and define  $Y(t)=\|u(t)\|_{L^{q_c}_v}^{q_c}$. 
%
It follows from  Proposition~\ref{Prop: time der Lr norm} that
{
\begin{equation*}
  \frac{d}{dt} Y (t) \leq  -q_c(q_c-1) h^{-p} \int_\Omega
  |\sqrt{Q(x) } \nabla \big(  u(t,x)^h\big) |^p \, dx  \\
\leq    - \frac{ q_c(q_c-1)}{M_p^p h^p}  Y (t)^{ { 1/ \sigma  } } ,
\end{equation*}
}
where the last step follows from \eqref{Sobolev ineq gain}.
If we integrate this differential inequality, we get
\begin{align*}
Y (t) 
\begin{cases}
\leq Y (0) \left[ 1- \frac{ q_c (q_c-1) t }{\sigma'M_p^p h^p (Y(0))^{1/\sigma'}} \right]^{\sigma'} ,\quad & 0<t<T_0, \\
=0 , & t\geq T_0,
\end{cases}
\end{align*}
where
$$
T_0=\frac{h^p\sigma'M_p^p }{  q_c (q_c-1)}\|u_0\|_{L^{q_c}_v}^{\frac{ q_c }{\sigma'}}.
$$

Now suppose $q_c<1$.  Fix $q_0\in (1,2)$ and let
$h_0=\frac{q_0+p-2}{p}$. Define $Y(t)=\|u(t)\|_{L^{q_0}_v}^{q_0}$. If
we repeat the above argument and, \eqref{Sobolev ineq gain},  and then apply
H\"older's inequality, {using the fact that $h_0p\sigma=\sigma(q_0+p-2)>q_0$,} we get {
\begin{equation*}
  \frac{d}{dt} Y (t) \leq  -q_0(q_0-1) {h_0}^{-p} \int_\Omega
  |\sqrt{Q(x) } \nabla \big(  u(t,x)^{h_0}\big) |^p \, dx  
\leq    - \frac{ q_0(q_0-1)}{M_p^p {h_0}^p (v(\Omega))^{\frac{q_0
      -q_c}{q_0\sigma'}}}
Y (t)^{(q_0+p-2)/q_0} .
\end{equation*}
}
The existence of a finite  extinction time now follows by an argument
similar to the one given above.

For a general  $u_0\in L^m_v(\Omega)$, let $u$ be the solution of \eqref{p-Lap-Dirichlet} with initial datum $u_0$.
We take a sequence of initial data $\{u_{0,j}\}_{j=1}^\infty \subset \HQp(\Omega)\cap L^m_v(\Omega)$ converging to $u_0$ in $L^m_v(\Omega)$. 
{The extinction time, $T_{0,j}$ for each solution $u_j$ of
\eqref{p-Lap-Dirichlet} with initial datum $u_{0,j}$ depends on
$\|u_{0,j}\|_{L^{q_c}_v}$, but by taking $j$ sufficiently large we may
assume this quantity is bounded by $2\|u_0\|_{L^{q_c}_v}$, independent
of $j$. Therefore, by \eqref{Strong converg u 1} the extinction
time, $T_0$, of $u$ exists and is finite.}

 


\section{Ultracontractive Bounds and the Sobolev Inequality}\label{Section:asymptotics and Sobolev inequality}

In this section, we prove Theorem~\ref{Thm: equivalence}.  Hereafter, we assume $p\in [2,\infty)$. 
%
Fix $u_0\in \HQp(\Omega)$ and let $u$ be the corresponding strong  solution   of \eqref{p-Lap-Dirichlet} on $[0,T]$. 
In the previous section we showed that the Sobolev inequality
\eqref{Sobolev ineq gain} implies \eqref{asymptotics L2} with $\beta$
and $\gamma$ defined by \eqref{Def: beta gamma}.  This gives us that
${\rm (ii)}$ implies ${\rm (i)}$. 

We will now show that ${\rm (i)} \Rightarrow {\rm (iii)} \Rightarrow {\rm (ii)}$.
Suppose~\eqref{asymptotics L2} holds and fix $q_0\in [1,2)$.
If we use $u$ as a test function in \eqref{Strong formulation}, and
argue as we did to prove~\eqref{time der} {using the
  Lebesgue Differentiation Theorem}, we get 
\begin{equation}\label{Time der L2 norm}
  \frac{d}{dt} \|u(t)\|_{L^2_v}^{2}
  =-2\| \nabla u (t) \|_{\cL^p_Q}^p
\geq -2\| \nabla u_0 \|_{\cL^p_Q}^p ;
  \end{equation}
 the last inequality follows from \eqref{L infty nabla u}.   If we integrate both sides of
\eqref{Time der L2 norm} on $[0,t]$, we get
\begin{equation}
\label{equivalence est 1}
\|u(t)\|_{L^2_v}^2 -  \|u_0\|_{L^2_v}^2 \geq  - 2t \|\nabla u_0 \|_{\cL^p_Q}^p.
\end{equation}
Since $u(t) \in L^\infty_v(\Omega)$, by H\"older's inequality
(cf.~\cite[Inequality (7.9)]{MR1814364}) we have that
$$
\|u(t)\|_{L^2_v}\leq  \|u(t)\|_{L^\infty_v}^{1-\theta} \|u(t)\|_{L^{q_0}_v}^\theta
$$
with $\theta=q_0/2$.   From \eqref{L infty contraction} we have that 
$$
\|u(t)\|_{L^{q_0}_v} \leq \|u_0\|_{L^{q_0}_v} .
$$
If we combine this with \eqref{asymptotics L2},   then we get
$$
\|u(t)\|_{L^2_v}^2 \leq C \frac{\|u_0\|_{L^{q_0}_v}^{q_0+\gamma (2-q_0)}}{t^{\beta(2-q_0)}}.
$$

If we now put this expression into \eqref{equivalence est 1}, we get that
$$
C \frac{\|u_0\|_{L^{q_0}_v}^{q_0+\gamma (2-q_0)}}{t^{\beta(2-q_0)}}+ 2t \|\nabla u_0 \|_{\cL^p_Q}^p \geq \|u_0\|_{L^2_v}^2
$$
holds for all $t\in (0,\infty)$. We minimize the left hand side by setting
$$
t= \bigg( \frac{C  \beta (2-q_0) \|u_0\|_{L^{q_0}_v}^{q_0+\gamma
    (2-q_0)} }
{2 \|\nabla u_0 \|_{\cL^p_Q}^p} \bigg)^{\frac{1}{(1+\beta(2-q_0))}}; 
$$
with this value of $t$ we can arrange terms to get
\begin{equation}
\label{G-N-ineq-proof}
\|u_0\|_{L^2_v}^2 \leq C \|u_0\|_{L^{q_0}_v}^{\frac{q_0+\gamma
    (2-q_0)}{ 1+\beta(2-q_0)}} \|\nabla u_0
\|_{\cL^p_Q}^{\frac{p\beta(2-q_0)}{ 1+\beta(2-q_0)}}.
\end{equation}
If we use the definitions of $\beta$ and $\gamma$ in \eqref{Def: beta
  gamma}, a straightforward computation shows that
$$
q_0+\gamma (2-q_0) + p\beta(2-q_0)= 2[1+\beta(2-q_0) ].
$$
Therefore, \eqref{G-N-ineq-proof} becomes~\eqref{G-N-ineq} and we see that
${\rm (i)} \Rightarrow {\rm (iii)}$.

Finally, given this interpolation inequality, it follows from
\cite[Theorem~3.1]{Bakry95} that for  $u \in \HQp(\Omega)$,
\begin{equation*}
\|u\|_{L^{\sigma p}_v} \leq M_p \|\sqrt{Q}\nabla u\|_p,
\end{equation*} 
so ${\rm (iii)}\Rightarrow {\rm (ii)}$.
This completes the proof.

\section*{Acknowledgements}
{The authors would like to express their gratitude to the anonymous
referees for their very careful review of our  the manuscript and for
identifying a number of typos and minor gaps in our arguments.}

\bibliographystyle{plain}
\bibliography{p-laplacian}

\end{document}